\definecolor{gris75}{gray}{0.3}
\newtheorem{remark}{Remark}
\newtheorem{prop}{Proposition}
\newtheorem{theo}{Theorem}
\newtheorem{lemma}{Lemma}
\newtheorem{coro}{Corollary}
\newtheorem{assum}{Assumption}
\newtheorem{cond}{Condition}
\newcounter{hypA}
\newcounter{hyp}
\newcommand{\un}{{\bs 1}}
\newcommand{\eqdef}{:=}
\newcommand{\bs}{\boldsymbol}
\newcommand{\sH}{{\mathsf H}}
\newcommand{\sE}{{\mathsf E}}
\newcommand{\sV}{{\mathsf V}}
\newcommand{\sd}{{\mathsf d}}
\newcommand{\cT}{{\mathcal T}}
\newcommand{\cP}{{\mathcal P}}
\newcommand{\cA}{{\mathcal A}}
\newcommand{\cK}{{\mathcal K}}
\newcommand{\cL}{{\mathcal L}}
\newcommand{\cF}{{\mathcal F}}
\newcommand{\cN}{{\mathcal N}}
\newcommand{\cV}{{\cal V}}
\newcommand{\cW}{{\cal W}}
\newcommand{\cS}{{\mathcal S}}
\newcommand{\bP}{{\mathbb P}}
\newcommand{\bR}{{\mathbb R}}
\newcommand{\bE}{{\mathbb E}}
\newcommand{\dG}{\partial G}
\newcommand{\la}{\langle}
\newcommand{\ra}{\rangle}
\newcommand{\bth}{{\bs \theta}}
\newcommand{\bY}{{\bs Y}}
\newcommand{\by}{{\bs y}}
\newcommand{\bsZ}{{\bs Z}}
\newcommand{\ath}{\langle \bth\rangle}
\newcommand{\thn}{{\bs \theta}_{n}}
\newcommand{\othn}{{\bs \theta}_{\bot,n}}
\newcommand{\oth}{{\bs \theta}_{\bot}}
\newcommand{\thnmu}{{\bs \theta}_{n-1}}
\newcommand{\othnmu}{{\bs \theta}_{\bot,n-1}}
\newcommand{\thkmu}{{\bs \theta}_{k-1}}
\newcommand{\othkmu}{{\bs \theta}_{\bot,k-1}}
\newcommand{\athn}{\langle\thn\rangle}
\newcommand{\athnmu}{\langle\thnmu\rangle}
\newcommand{\Jo}{{J_\bot}}
\newcommand{\nmu}{_{n-1}}
\title{Convergence~of a Multi-Agent Projected Stochastic Gradient Algorithm for Non-Convex Optimization}
\author{ Pascal Bianchi,~\IEEEmembership{Member,~IEEE}, \ J\'er\'emie
  Jakubowicz~\IEEEmembership{Member,~IEEE} \thanks{P. Bianchi is with Institut
    Mines - T\'el\'ecom - Telecom ParisTech - CNRS/LTCI, 75634 Paris Cedex 13, France
    (e-mail: bianchi@telecom-paristech.fr). J. Jakubowicz is with Institut Mines-T\'el\'ecom - Telecom SudParis -
CNRS/SAMOVAR, 91000 Evry, France (e-mail: jeremie.jakubowicz@telecom-sudparis.fr).
This work has been supported
    by ANR Program C-FLAM.}  }
\begin{document}

\maketitle

\begin{abstract}
  We introduce a new framework for the convergence analysis of a class of
  distributed constrained non-convex optimization algorithms in multi-agent
  systems.  The aim is to search for local minimizers of a non-convex objective
  function which is supposed to be a sum of local utility functions of the
  agents.  The algorithm under study consists of two steps: a local stochastic
  gradient descent at each agent and a gossip step that drives the network of
  agents to a consensus.  Under the assumption of decreasing stepsize, it is
  proved that consensus is asymptotically achieved in the network and that the
  algorithm converges to the set of Karush-Kuhn-Tucker points.  As an important
  feature, the algorithm does not require the double-stochasticity of the gossip
  matrices.  It is in particular suitable for use in a natural broadcast
  scenario for which no feedback messages between agents are required.  It is
  proved that our results also holds if the number of communications in the
  network per unit of time vanishes at moderate speed as time increases,
  allowing potential savings of the network's energy.  Applications to power
  allocation in wireless ad-hoc networks are discussed.  Finally, we provide
  numerical results which sustain our claims.
\end{abstract}

%%%%%%%%%%%%%%%%%%%%%%%%%%%%%%%%%%%%%%%%%%%%%%%%%%%%%%%%%%%%%%%%%%
\section{Introduction}
\label{sec:intro}
%%%%%%%%%%%%%%%%%%%%%%%%%%%%%%%%%%%%%%%%%%%%%%%%%%%%%%%%%%%%%%%%%%

Stochastic gradient descent is a widely used procedure for finding critical
points of an unknown function $f$~\cite{robbins:monro:1951}.
% Its applications range from Statistics (e.g.
% \cite{delyon:lavielle:moulines:1999}), Machine Learning (e.g.
% \cite{gadat:younes:2007}), Electrical Engineering (e.g.
% \cite{widrow:mccool:larimore:johnson:1976}) and Communication Networks.
% Consider the problem of minimizing a given differentiable function $f$.
Formally, it can be summarized as an iterative scheme of the form $\theta_{n+1}
= \theta_n + \gamma_{n+1} (-\nabla f(\theta_n)+\xi_{n+1})$ where $\nabla$ is the
gradient operator and where $\xi_{n+1}$ represents a random
perturbation. Relevant selection of the step size $\gamma_n$ ensures that, for a
well behaved function $f$, sequence $(\theta_n)_{n\geq 0}$ will eventually
converge to a critical point.

In this paper, we investigate a distributed optimization problem which is of
practical interest in many multi-agent contexts such as parallel
computing~\cite{bertsekas:1997}, statistical
estimation~\cite{schizas:2008,schizas:2009,barbarossa:2007,ram:veeravalli:nedic:TAC:2010},
robotics~\cite{BuCoMa08} or wireless
networks~\cite{ram:veeravalli:nedic:INFOCOM:2009}.  Consider a network of $N$
agents.  To each agent $i=1,\dots, N$, we associate a possibly non-convex
continuously differentiable utility function $f_i:\bR^d\to\bR$ where $d\geq 1$.
Let $G\subset \bR^d$ be a nonempty compact convex subset. We address the the
following optimization problem:
\begin{equation}
  \label{eq:pb}
  \min_{\theta\in G} \sum_{i=1}^N f_i(\theta)\ .
\end{equation}
The set $G$ is assumed to be known by all agents.  However, a given agent $i$
does not know the utility functions $f_j$'s of other agents $j\neq
i$. Cooperation between agents is therefore needed to find minimizers
of~(\ref{eq:pb}).  Moreover, any utility function $f_i$ may be unperfectly
observed by agent $i$ itself, due to the presence of random observation
noise. We thus address the framework of distributed \emph{stochastic
  approximation}.

The literature contains at least two different cooperation approaches for
solving~(\ref{eq:pb}). The so-called \emph{incremental} approach is used
by~\cite{rabbat:nowak:2004, nedic:bertsekas:jopt-2001,
  rabbat:nowak:sac-2005,johansson:rabi:johansson:jopt-2009,
  ram:nedic:veeravalli:jopt-2009}: a message containing an estimate of the
desired minimizer iteratively travels all over the network.  At any instant, the
agent which is in possession of the message updates its own estimate and adds
its own contribution, based on its local observation.  Incremental algorithms
% allow a parcimonious amount of communication. However, they
generally require the message to go through a Hamiltonian cycle in the
network. Finding such a path is known to be a NP complete problem.  Relaxations
of the Hamiltonian cycle requirement have been proposed: for instance,
\cite{nedic:bertsekas:jopt-2001} only requires that an agent communicates with
another agent randomly selected in the network (not necessarily in its
neighborhood) according to the uniform distribution.  However, substantial
routing is still needed. In~\cite{lu:tang:regier:bow:tac-2011},
problem~(\ref{eq:pb}) is solved using a different approach, assuming that agents
perfectly observe their utility functions and know also the utility functions of
their neighbors.

This paper focuses on another cooperation approach based on \emph{average
  consensus} techniques, see
references~\cite{degroot:1974,tsitsiklis:phd-1984}. In this context, each agent
maintains its own estimate.  Agents separately run local gradient algorithms and
simultaneously communicate in order to eventually reach an agreement over the
whole network on the value of the minimizer.  Communicating agents combine their
local estimates in a linear fashion: a receiver computes a weighted average
between its own estimate and the ones which have been transmitted by its
neighbors.  Such combining techniques are often refered to as \emph{gossip}
methods.
% By consensus it is meant that all the agents' estimates have the same
% value. Gossip algorithms have been the subject of intense investigation over
% the last decade, see \cite{tsitsiklis:phd-1984, kempe:dobra:gehrke:focs-2003,
%   boyd:ghosh:prabhakar:shah:it-2006,
%   blondel:hendrickx:olshevsky:tsitsiklis:dc-2005,
%   aysal:yildiz:sarwate:scaglione:tsp-2009,
%   benezit:dimakis:thiran:vetterli:it-2010,
%   benezit:blondel:thiran:tsitsiklis:vetterli:isit-2010} to name a few.

The idea underlying the algorithm of interest in this paper is not new.  Its
roots can be found in %the seminal Ph.D. thesis of J.N.~Tsitsiklis
\cite{tsitsiklis:phd-1984,tsitsiklis:bertsekas:athans:tac-1986} where a network
of processors seeks to optimize some objective function \emph{known} by all
agents (possibly up to some additive noise). More recently, numerous works
extended this kind of algorithm to more involved multi-agent scenarios,
see~\cite{nedic:ozdaglar:tac-2009,nedic:ozdaglar:parrilo:tac-2010,ram:nedic:veeravalli:jota-2010,nedic:tac-2011,kar:2010,stankovic:stankovic:stipanovic:tac-2011}
for a non exhaustive list.  Multi-agent systems are indeed more difficult to
deal with, because individual agents does not know the global objective function
to be minimized. Reference \cite{nedic:ozdaglar:tac-2009} addresses the problem
of \emph{unconstrained} optimization, assuming \emph{convex} but not necessarily
differentiable utility functions.  Convergence to a global minimizer is
established assuming that utility functions have bounded (sub)gradients. Let us
also mention \cite{stankovic:stankovic:stipanovic:tac-2011} which focuses on the
case of \emph{quadratic} objective functions. Unconstrained optimization is also
investigated in~\cite{nous:eusipco} assuming differentiable but non necessarily
convex utility functions and relaxing boundedness conditions on the
gradients. Convergence to a critical point of the objective function is proved
and the asymptotic performance is evaluated under the form of a central limit
theorem.
% In \cite{bianchi:fort:hachem}, the problem tackled is the same, except that
% the functions are not assumed convex, and gradients are known only up to noisy
% measurments; they establish convergence to consensus at a critical point of
% the objective function (which is not guaranteed to be a global optimum by lack
% of convexity). In
In \cite{nedic:ozdaglar:parrilo:tac-2010}, the problem of \emph{constrained}
distributed optimization is addressed.  Convergence to an optimal consensus is
proved when each utility function $f_i$ is assumed convex and perfectly known by
agent $i$.  These results are extended in
% \emph{convex} objective function to optimize is the sum of locally known
% convex functions, each agent has to remain in its contraints set; convergence
% to an optimal consensus is proved. In
\cite{ram:nedic:veeravalli:jota-2010} to the stochastic descent case
\emph{i.e.}, when the observation of utility functions is perturbed by a random
noise.

In each of these works, the gossip communication scheme can be represented by a
sequence of matrices $(W_n)_{n\geq 1}$ of size $N\times N$, where the $(i,j)$th
component of $W_n$ is the weight given by agent $i$ to the message received from
$j$ at time $n$, and is equal to zero in case agent $i$ receives no message from
$j$.  In most works (see for instance \cite{nedic:ozdaglar:tac-2009,
  nedic:ozdaglar:parrilo:tac-2010,
  ram:nedic:veeravalli:jota-2010,nous:eusipco}), matrices $W_n$ are assumed
\emph{doubly stochastic}, meaning that $W_n^T\un=W_n\un=\un$ where $\un$ the $N\times 1$ vector whose components are all equal to one and where $^T$ denotes
transposition. Although row-stochasticity ($W_n\un=\un$) is rather easy to
ensure in practice, column-stochasticity ($W_n^T\un=\un$) implies more stringent
restrictions on the communication protocol. For instance, in~\cite{boyd:2006},
each one-way transmission from an agent $i$ to another agent $j$ requires at the
same time a feedback link from $j$ to $i$.  Double stochasticity prevents one
from using natural broadcast schemes, in which a given agent may transmit its
local estimate to \emph{all} its neighbors without expecting any immediate
feedback~\cite{aysal:2009}.
% the gossip step is performed using \emph{bistochastic} matrices; row
% stochastic matrices are required to actually reach a consensus, while the
% column stochasticity ensures a ``sum-preserving'' network; in
% \cite{tsitsiklis:phd-1984, tsitsiklis:bertsekas:athans:tac-1986} the matrices
% involved are not assumed column stochastic since the objective function is
% shared by all the agents. However, when the objective function is the sum of
% locally known functions, bistochastic matrices are helpful. Unfortunately,
% column stochasticity property requires \emph{cooperation} within the network,
% in order to ensure that weights from \emph{different} agents sum to $1$.
Very recently, \cite{nedic:tac-2011} made a major step forward, getting rid of
the column stochasticity condition, and thus opening the road to broadcast-based
constrained distributed optimization algorithms. It is worth noting however that
the algorithm of \cite{nedic:tac-2011} is such that only receiving agents update
their estimates. Otherwise stated, an agent deletes its local observations as
long as it is not the recipient of a message.
% Unfortunately, the algorithm of \cite{nedic:tac-2011} has two drawbacks. First
% it may result in a waste of data since only communicating agents perform a
% gradient descent step.
Moreover, except perhaps in some special network topologies, the algorithm
of~\cite{nedic:tac-2011} strongly relies on a specific choice of the stepsize.
In particular, a necessary condition for the convergence to the desired
consensus is that the stepsize vanishes at speed $1/n$.  However, in practice,
it is often desirable to have a leeway on the choice of the stepsize to avoid
slow convergence issues.

\subsection*{Contributions}

In this paper, we address the optimization problem~(\ref{eq:pb}) using a
distributed projected stochastic gradient algorithm involving random gossip
between agents and decreasing stepsize.
\begin{itemize}
\item Unlike previous works, utility functions are allowed to be
  \emph{non-convex}. We introduce a new framework for the analysis of a general
  class of distributed optimization algorithm, which does not rely on convexity
  properties of the utility functions. Instead, our approach relies on recent
  results of reference~\cite{benaim:2005} about perturbed differential inclusions. 
  Under a set of assumptions made clear in the next section, we establish that, almost surely,
  the sequence of estimates of any agent shadows the behavior of a differential variational
  inequality, and eventually converges to the set of Karush-Kuhn-Tucker (KKT) points of~(\ref{eq:pb}).

\item Our assumptions encompass the case of non-doubly stochastic matrices $W_n$ and, as a particular case, the natural broadcast scheme
  of reference~\cite{aysal:2009}.  Our proofs reveal that, loosely speaking, the
  relaxation of column stochasticity brings a ``noise-like'' term in the
  algorithm dynamics, but which is not powerful enough to prevent convergence to
  the KKT points.

\item We show that our convergence result still holds in case the number of
  communications in the network per unit of time vanishes at moderate speed as
  time increases.
\end{itemize}
As an illustration, we apply our results to the problem of power allocation in
the wireless interference channel.  \medskip

% Our contribution is threefold. First, we address the problem of
% \emph{constrained} distributed optimization problem for \emph{non-convex}
% functions, using some powerful tools dealing with perturbed differential
% inclusions, mainly developped by \cite{benaim:2005}. Second, we prove the
% almost sure convergence of a large family of protocols to critical points of
% $f$, including a novel broadcast based algorithm that only requires \emph{row
%   stochastic} matrices.We show that, loosely speaking, the loss of column
% stochasticity brings a ``noise-like'' term in the algorithm dynamics, but not
% powerful enough to prevent convergence to a critical point of the problem to
% happen. Third we successfully use this broadcast optimization algorithm to
% solve the problem of Distributed Power Allocation for ad-hoc wireless networks
% ({\bf *** \cite{uneref} ***}).

The paper is organized as follows. Section \ref{sec:model} introduces the
distributed algorithm and the main assumptions on the network and the
observation model. The main result is stated in Section~\ref{sec:constrained}.
Section~\ref{sec:proofcvc} is devoted to its proof.  We discuss applications to
power allocation in Section~\ref{sec:appli}.  Section~\ref{sec:simu} describes
some standard communication schemes in more details, and provides numerical results.

\section{The Distributed Algorithm}
\label{sec:model}

\subsection{Description of the Algorithm}

After an
\noindent{\tt[Initialization step]} where each agent starts at a given
$\theta_{0,i}$, each node $i$ generates a stochastic process
$(\theta_{n,i})_{n\geq 1}$ in $\bR^d$
using a two-step iterative algorithm:\\
\noindent {\tt [Local step]} Node $i$ generates at time $n$ a temporary estimate
$\tilde \theta_{n,i}$ given by
\begin{equation}
  \label{eq:tempupdate}
  \tilde \theta_{n,i}= P_{G}\left[\theta_{n-1,i} + \gamma_{n}Y_{n,i}\right]\ ,
\end{equation}
where $\gamma_n$ is a deterministic positive step size, $Y_{n,i}$ is a random
variable, and $P_{G}$ represents the projection operator onto the set $G$.
Random variable $Y_{n,i}$ is to be interpreted as a perturbed version of the
negative gradient of $f_i$ at point $\theta_{n-1,i}$. As will be made clear by
Assumption~\ref{hyp:model}(e) below, it is convenient to think of $Y_{n,i}$ as
$Y_{n,i}=-\nabla f_i(\theta_{n-1,i})+\delta M_{n,i}$ where $\delta M_{n,i}$ is a
martingale difference noise which stands for the random perturbation.

\noindent {\tt [Gossip step]} Node $i$ is able to observe the values
$\tilde\theta_{n,j}$ of some other $j$'s and computes the weighted average:
$$
\theta_{n,i}=\sum_{j=1}^N w_n(i,j)\,\tilde \theta_{n,j}
$$
where for any $i$, $\sum_{j=1}^Nw_n(i,j)=1$. In the sequel, we define the $N\times  N$
matrix $W_n := [w_n(i,j)]_{i,j=1\cdots N}$.

Define the random vectors $\thn$ and $\bY_n$ as
$\thn:=(\theta_{n,1}^T,\dots,\theta_{n,N}^T)^T$ and $\bY_n = (Y_{n,1}^T,\dots,
Y_{n,N}^T)^T$.  The algorithm reduces to:
\begin{equation}
  \thn = (W_n\otimes I_d)P_{G^N}\left[\thnmu+\gamma_{n}\bY_n\right]
  \label{eq:algo} 
\end{equation}
where $\otimes$ denotes the Kronecker product, $I_d$ is the $d\times  d$ identity
matrix and $P_{G^N}$ is the projector onto the $N$th order product set
$G^N:=G\times \cdots\times  G$.

\subsection{Observation and Network Models}
\label{sec:assumptions}

% \subsection{Observation and Network Models}

% The time-varying communication network between the nodes is represented by the
% sequence of random matrices $(W_n)_{n\geq 1}$.

Random processes $(\bY_n,W_n)_{n\geq 1}$ are defined on a measurable space
equipped with a probability~$\bP$. Notation $\bE$ represents the corresponding
expectation.  For any $n\geq 1$, we introduce the $\sigma$-field $\cF_n =
\sigma({\bs \theta}_0,\bY_{1},\dots,\bY_{n},W_{1},\dots,W_{n})$. The
distribution of the random vector $\bY_{n+1}$ conditioned on $\cF_n$ is assumed
to be such that:
\begin{equation}
\label{eq:lawYn}
\bP\left( \bY_{n+1} \in A \, |\, \cF_n\right) = \mu_{\thn}(A)
\end{equation}
for any measurable set $A$, where $\left(\mu_\bth\right)_{\bth\in \bR^{dN}}$ is
a given family of probability measures on $\bR^{dN}$. 
% For any $\bth\in\bR^{dN}$,
% define $\bE_{\bth}[g(\bY)]:= \int g(y)\mu_{\bth} (dy)$ for any positive function
% $g$ on $\bR^{dN}$. Similarly, we use notation $\bE_{\bth}[g(Y_i)]:= \int
% g(y_i)\mu_{\bth} (dy_1\times \cdots\times  dy_N)$ for any positive function $g$ on $\bR^{d}$
% and for any $i=1,\cdots,N$.
We denote by $|x|$ the Euclidean norm of a vector $x$. We denote by $E^c$ the
complementary set of any set $E$. Notation $x\lor y$ stands for $\max(x,y)$.
\begin{assum}
  \label{hyp:model} The following conditions hold:
  \begin{enumerate}
  \item[a)] $(W_n)_{n\geq 1}$ is a sequence of matrix-valued random variables with non-negative components~s.t.
    \begin{itemize}
    \item $W_n$ is row stochastic for any $n$: $W_n\un=\un$,
    \item $\bE(W_n)$ is column stochastic for any $n$: $\un^T\bE(W_n)=\un^T$.
    \end{itemize}
  \item[b)] The spectral norm $\rho_n$ of matrix
    $\bE(W_n^T(I_N-\un\un^T/N)W_n)$ satisfies:
    \begin{equation}
      \lim_{n\to\infty}n(1-\rho_n)=+\infty\ .
      \label{eq:limrhon}
    \end{equation}
  \item[c)] Conditionaly to $\cF_n$, $W_{n+1}$ and $\bs Y_{n+1}$ are
    independent. Moreover, the probability distribution of $\bs Y_{n+1}$
    conditionaly to $\cF_n$ depends on $\bs\theta_n$ only. It is denoted
    $\mu_{\theta_n}$.
%For any positive measurable functions $g,h$,
%    \begin{equation*}
%      \bE[g(W_{n+1})h(\bY_{n+1})\vert \cF_n] 
%      =\bE[g(W_{n+1})]\, \int h\sd\mu_{\thn}
%    \end{equation*}
  \item[d)] For any $i=1,\dots, N$, $f_i$ is continuously differentiable.
  \item[e)] For any $n\geq 1$,
$$
\bE[\bY_{n+1}|\cF_n] = -(\nabla f_1(\theta_{n,1})^T,\cdots,\nabla f_N(\theta_{n,N})^T)^T\ .
$$
\item[f)] $\sup_{\bth\in G^N} \int |\bs y|^2 \sd\mu_{\bth}(\bs y)<\infty$.
%\item[g)] $(\mu_\bth)_{\bth\in G^N}$ is tight\footnote{For any $\epsilon>0$, there exists a compact set $\cK\subset\bR^{dN}$ such that $\sup_{\bth\in G^N}\mu_{\bth}({\cK}^c)<\epsilon$.}.
\end{enumerate}\end{assum}
We now discuss the above Assumption.  Conditions \ref{hyp:model}(a) and
\ref{hyp:model}(b) summarize our assumptions on matrices $W_n$ that is, on the
communication scheme used in the network. Following the work
of reference~\cite{boyd:2006}, random gossip is assumed in this paper. Each matrix $W_n$
must be row stochastic, this means that each agent $i=1,\cdots,N$ must compute a
weighted \emph{average} $\sum_j w_n(i,j)=1$.  Note that a quite classical
condition in the literature is to further assume that $W_n$ is column-stochastic
for any $n$
\cite{nedic:ozdaglar:tac-2009,nedic:ozdaglar:parrilo:tac-2010,ram:nedic:veeravalli:jota-2010,nous:eusipco}.
Column stochasticity inevitably goes with some restrictions on the communication
protocol as discussed in Section~\ref{sec:intro}. Here, our assumption is
weaker. We only require that $W_n$ is column stochastic \emph{on average}. This
is for instance the case in the natural broadcast scheme of reference~\cite{aysal:2009}
which will be discussed in the Section~\ref{sec:gossip}.  
%The condition on the
%trace of ${\bE(W_nW_n^T)}$ is immediately satisfied if coefficients $w_n(i,j)$
%are non-negative.  It is also satisfied if the sequence of matrices
%$(W_n)_{n\geq 1}$ is identically distributed.  
Assumption~\ref{hyp:model}(b)
is a connectivity condition of the underlying network graph which
will be discussed in more details in Section~\ref{sec:gossip}.

% Due to Assumption~\ref{hyp:model}(b), note that $\rho_n<1$ as soon as $n$ is
% large enough.  Loosely speaking, Assumption Assumption~\ref{hyp:model}(b)
% ensures that $\bE(W_nW_n^T)$ is close enough to the projector $\un\un^T/N$ on
% the line $\{t\un : t\in\bR\}$.  This way, the amount of information exchanged
% in the network remains sufficient in order to reach a consensus.

Assumptions~\ref{hyp:model}(c-e) are related to the observation model.
Assumption~\ref{hyp:model}(c) states three different things. First, random variables $W_{n+1}$ and
$\bY_{n+1}$ are independent conditionally on the past. Second,
$(W_n)_{n\geq 1}$ form an independent sequence (not necessarily identically
distributed).  Finally, the distribution of $\bY_{n+1}$ conditionally on the past is as in~(\ref{eq:lawYn}).
Assumption~\ref{hyp:model}(e) means that each $Y_{n,i}$ can be
interpreted as a noisy version of $-\nabla f_i(\theta_{n-1,i})$. The
distribution of the random additive perturbation $Y_{n,i}-(-\nabla
f_i(\theta_{n-1,i}))$ is likely to depend on the past through the value of
$\thnmu$, but has a zero mean for any given value of $\thnmu$.
By Markov's inequality, Assumption~\ref{hyp:model}(f) implies that $(\mu_\bth)_{\bth\in G^N}$ is tight \emph{i.e.},
for any $\epsilon>0$, there exists a compact set $\cK\subset\bR^{dN}$ such that $\sup_{\bth\in G^N}\mu_{\bth}({\cK}^c)<\epsilon$.
\begin{assum} \label{hyp:step}
  \begin{enumerate}\item[a)] The deterministic sequence $(\gamma_n)_{n\geq 1}$
    is positive and such that $\sum_n\gamma_n=\infty$.
  \item[b)] There exists $\alpha>1/2$ such that:
    \begin{align}
      &\lim_{n\to\infty} n^{\alpha}\gamma_n = 0 \label{eq:condgamma}\\
      &\liminf_{n\to\infty} \frac{1-\rho_n}{n^{\alpha}\gamma_n}>0 \
      . \label{eq:condrho}
    \end{align}
  \end{enumerate}
\end{assum}
Note that, (\ref{eq:condgamma}) implies (but is not equivalent to)
$\sum_n\gamma_n^2 <\infty$, which is a rather usual assumption in the framework
of decreasing step size stochastic algorithms~\cite{kushner:2003}. In order to
have some insights on~(\ref{eq:condrho}), first consider the case where the
matrices $(W_n)_{n\geq 1}$ form an i.i.d. sequence \emph{i.e.}, the spectral
radius $\rho:=\rho_n$ does not depend on $n$.  Then both
conditions~(\ref{eq:limrhon}) and~(\ref{eq:condrho}) are satisfied if and only
if:
\begin{equation}
  \label{eq:rhoinfun}
  \rho <1 \ .
\end{equation}
Nevertheless, matrices $(W_n)_{n\geq 1}$ do not need to be i.i.d. An interesting
example is when matrix $W_n$ is likely to be equal to identity with a
probability that tends to one as $n\to \infty$. From a communication point of
view, this means that the exchange of information between agents becomes rare as
$n\to\infty$. This context is especially interesting in case of wireless
networks, where it is often required to limit as much as possible the
communication overhead. Let us emphasize that if $W_n$ is assumed i.i.d with
$\rho<1$, then the usual assumptions $\sum \gamma_n = +\infty$ and $\sum
\gamma_n^2<\infty$ can be substituted to Assumption~\ref{hyp:step}. Assumption~\ref{hyp:step} is designed to take into account the non-stationnarity of $W_n$.

Consider for instance the case where $1-\rho_n=a/n^\eta$ and
$\gamma_n=\gamma_0/n^\xi$ for some constants $a,\gamma_0>0$.  Then, a sufficient
condition for~Assumption~\ref{hyp:step} is:
$$
0\leq \eta < \xi -1/2 \leq 1/2\ .
$$
In particular, $\xi\in (1/2,1]$ and $\eta\in [0,1/2)$.

% The case $\eta=0$ typically correspond to the case where matrices $W_n$ are
% identically distributed.  In this case, $\rho_n=\rho$ is a constant w.r.t. $n$
% and our assumptions reduce to: $\rho <1$. However, matrices $W_n$ are not
% necessarily supposed to be identically distributed. Our results hold in a more
% general setting. As a matter of fact, all results of this paper hold true when
% matrices $W_n$ are allowed to converge to the identity matrix (but at a
% moderate speed, slower than $1/\sqrt{n}$ in any case). Therefore, matrix $W_n$
% may be taken to be the identity matrix with high probability, without any
% restriction on the results presented in this paper. From a communication
% network point of view, this means that the exchange of information between
% agents reduces to zero as $n\to\infty$. This remark has practical consequences
% in case of wireless networks, where it is often required to limit as much as
% possible the communication overhead.

\subsection{Illustration: Some Examples of Gossip schemes}
\label{sec:gossip}

Here, we focus on two standard communication schemes and give the corresponding
sequence $(W_n)_{n\geq 1}$ for each of them. We refer the reader to
reference~\cite{benezit:thesis} for a more complete picture and for more general
gossip strategies.
% Our description focuses on random gossip schemes. Random gossip is especially
% convenient to model asynchronous communication networks: at time $n$, some
% arbitrary node wakes up at random and transmits information to its neighbors.
We introduce what we shall refer to as the \emph{pairwise} and the
\emph{broadcast} schemes.  The first one can be found in the paper of Boyd et
al.~\cite{boyd:2006} on average consensus while the second is inspired from the
broadcast scheme depicted in~\cite{aysal:2009}.  The network of agents is
represented as a nondirected graph $(\sV,\sE)$ where $\sV$ is the set of $N$
nodes and $\sE$ corresponds to the set edges between nodes.

\subsubsection{Pairwise Gossip}

At time $n$, a single node $i$ wakes up (node $i$ is chosen at random, uniformly
within the set of nodes and independently from the past). Node $i$ randomly
selects a node $j$ among its neighbors in the graph. Node $i$ and $j$ exchange
their temporary estimates $\tilde\theta_{n,i}$ and $\tilde\theta_{n,j}$ and
compute the weighted average $\theta_{n,i}=\theta_{n,j}= \beta\tilde\theta_{n,i}
+ (1-\beta) \tilde\theta_{n,j}$ where $0<\beta <1$. Other nodes $k\notin\{i,j\}$
simply set $\theta_{n,k}=\tilde \theta_{n,k}$. Set $\beta=1/2$ for simplicity.
In this case, the corresponding matrix $W_n$ is given by $W_n =
I_N-(e_i-e_j)(e_i-e_j)^T/2$ where $e_i$ denotes the $i$th vector of the
canonical basis in $\bR^N$. Note that for each $n$, $W_n$ forms an
i.i.d. sequence of \emph{doubly stochastic} matrices.
Assumption~\ref{hyp:model}(a) is obviously satisfied.  Moreover, the spectral
radius $\rho$ of matrix $\bE(W_n^T(I_N-\un\un^T/N)W_n)$
satisfies~(\ref{eq:rhoinfun}) if and only if $(\sV,\sE)$ is a connected graph
(see~\cite{boyd:2006}).

\subsubsection{Broadcast Gossip}

At time $n$, a random node $i$ wakes up. The latter node is supposed to be
chosen at random w.r.t. the uniform distribution on the set of vertices.  Node
$i$ broadcasts its temporary update to all its neighbors. Any neighbor $j$,
computes the weighted average $\theta_{n,j}= \beta\tilde\theta_{n,i} + (1-\beta)
\tilde\theta_{n,j}$. On the other hand, any node $k$ which does not belong to
the neighborhood $\cN_i$ of $i$ (this includes $i$ itself) simply sets
$\theta_{n,k}=\tilde\theta_{n,k}$. Note that, as opposed to the pairwise scheme,
the transmitter node $i$ does not expect any feedback from its neighbors.  It is
straightforward to show that the $(k,\ell)$th component of matrix $W_n$
corresponding to such a scheme writes:
$$
w_n(k,\ell) = \left\{
  \begin{array}[h]{ll}
    1 & \textrm{if } k\notin \cN_i \textrm{ and }k=\ell\\
    \beta  & \textrm{if } k\in\cN_i \textrm{ and }\ell=i\\
    1-\beta & \textrm{if } k\in\cN_i \textrm{ and }k=\ell\\
    0 & \textrm{otherwise.}
  \end{array}\right.
$$
As a matter of fact, the above matrix $W_n$ is not doubly stochastic since
$\un^TW_n\neq \un^T$.  Nevertheless, it is straightfoward to check that
$\un^T\bE(W_n)= \un^T$ (see for instance~\cite{aysal:2009}).  Thus, the sequence
of matrices $(W_n)_{n\geq 1}$ satisfies the Assumption~\ref{hyp:model}(a).  Once
again, straightforward derivations which can be found in~\cite{aysal:2009} show
that the spectral radius $\rho$ satisfies~(\ref{eq:rhoinfun}) if and only if
$(\sV,\sE)$ is a connected graph.

%%%%%%%%%%%%%%%%%%%%%%%%%%%%%%%%%%%%%%%%%%%%%%%%%%%%%%%%%%%%
\section{Convergence w.p.1}
\label{sec:constrained}

\subsection{Framework and Assumptions}

We study the case where for any $i=1,\cdots,N$, the set $G$ is determined by a
finite set of $p$ inequality constraints ($1\leq p<\infty$):
\begin{equation}
  \label{eq:G}
  G := \left\{ \theta\in\bR^d\ :\ \forall j=1,\dots,p,\ q_j(\theta)\leq 0\right\}
\end{equation}
for some functions $q_{1},\dots,q_{p}$ which satisfy the following conditions.
For any $\theta\in \bR^d$, we denote by $A(\theta)\subset \{1,\dots,p\}$ the
active set \emph{i.e.}, $A(\theta) = \{j:q_j(\theta)=0,\theta\in G\}$. Denote
by $\dG$ the boundary of $G$.
\begin{assum}
  \label{hyp:KT}
  \begin{enumerate}\item[a)] The set $G$ defined by~(\ref{eq:G}) is nonempty and
    compact.
  \item[b)] For any $j=1,\cdots,p$, $q_j:\bR^d\to\bR$ is a convex function,
    continuously differentiable in a neighborhood of $\dG$.
  \item[c)] For any $\theta\in\dG$, $(\nabla q_j(\theta):j\in A(\theta))$ is a
    linearly independent collection of vectors.
  \end{enumerate}\end{assum}
In other terms, some regularity is imposed on $G$. Moreover, (\ref{eq:G},c) is a
simple \emph{qualification} assumption \cite{rockafellar:wets:1998}: note that
the same set $G$ can be expressed using different constraints; for instance, one
can always duplicate constraints arbitrarily. The qualification assumption says
that it is up to the user to remove redundant constraints.
% In the particular case where all utility functions $f_1,\dots,f_N$ are assumed
% convex, it is possible to study the convergence w.p.1 of the
% algorithm~(\ref{eq:algo}) following an approach similar
% to~\cite{nedic:ozdaglar:parrillo:2010}, and to prove under some conditions
% that consensus is achieved at a global minimum of the aggregate objective
% function $f$. Nevertheless, utility functions may not be convex in a large
% number of situations, and there seems to be few hope to generalize the proof
% of~\cite{nedic:ozdaglar:parrillo:2010} in such a wider setting. In this paper,
% we do {\bf not} assume that the utility functions are convex. In this
% situation, convergence to a global minimum of~(\ref{eq:pb}) is no longer
% guaranteed. We nevertheless prove the

\subsection{Notations}

Recall that $|x|$ represents the Euclidean norm of a vector $x$. Denote by $\nabla$
the gradient operator. We denote by 
\begin{equation}
  \label{eq:projectors}
  J\eqdef (\un\un^T/N)\otimes I_d \ , \qquad \qquad \Jo\eqdef I_{dN}-J \ ,
\end{equation}
resp.  the projector onto the \emph{consensus subspace} $\left\{ \un\otimes
  \theta : \theta\in\bR^d\right\}$ and the projector onto the orthogonal
subspace.  For any vector ${\bs x} \in\bR^{dN}$, define the vector of $\bR^d$
\begin{equation}
  \label{eq:average}
  \la \bs x \ra \eqdef \frac 1N ({\un^T\otimes I_d})\bs x  \ . 
\end{equation}
Note that $\la \bs x \ra=(x_1+\dots+x_N)/N$ in case we write $\bs x
=(x_1^T,\dots,x_N^T)^T$ for some $x_1,\dots, x_N$ in $\bR^d$. We denote by $\bs x_\bot := \Jo\bs x$
the projection of $\bs x$ on the orthogonal to the consensus space. Remark that
$\bs x=\un\otimes \la \bs x \ra+\bs x_\bot$. In particular, we set $\othn := \Jo\thn$ where $\Jo$
is given by (\ref{eq:projectors}) and refer to $\othn$ as the \emph{disagreement vector}.
Denote by: $$f:=\frac 1N\sum_{i=1}^N
f_i$$ the average of utility functions. Define the set of KKT points of $f$ on
$G$ (also called the set of stationary points) as:
\begin{equation}
\label{eq:KKT}
\cL := \left\{ \theta \in G\ : \ -\nabla f(\theta) \in \cN_G(\theta)\right\}\ ,
\end{equation}
where $\cN_G(\theta)$ is the normal cone \emph{i.e.}, $\cN_G(\theta):=\{ v \in
\bR^d\ :\ \forall \theta'\in G, v^T (\theta-\theta')\geq 0\}$. 
%Define
%$$
%\un\otimes\cL:=\{\un\otimes\theta\ :\ \theta\in\cL\}\ .$$  
Define
$\sd(\bs x,A):=\inf\{ |\bs x-a|\,:a \in A\}$ for any $\bth\in\bR^{dN}$ and any set
$A$. We say that a random sequence $(\bs x_n)_{n\geq 1}$ converges almost surely (a.s.) to a set $A$ if 
$\sd(\bs x_n,A)$ converges a.s. to zero as $n\to\infty$. Please note that convergence to $A$ does not imply convergence to some point of $A$.

\subsection{Main result}

\begin{theo} 
  \label{the:cvc}
  Assume that $f(\cL)$ has an empty interior.
  Under Assumptions~\ref{hyp:model},~\ref{hyp:step},~\ref{hyp:KT}, the sequence $(\thn)_{n\geq 1}$ converges a.s. to the set
  $\{\un\otimes\theta\ :\ \theta\in\cL\}$.
  Moreover, $(\athn)_{n\geq 1}$ converges a.s. to a connected component of $\cL$.
\end{theo}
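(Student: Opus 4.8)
The plan is to follow the dynamical-systems route indicated in the introduction: first show that the disagreement vector vanishes so that all agents asymptotically agree, then show that the common value $\athn$ is a constrained stochastic approximation of a projected gradient flow, and finally invoke the theory of perturbed differential inclusions of~\cite{benaim:2005} with $f$ as a Lyapunov function. Throughout, write $\tilde\bth_n := P_{G^N}[\thnmu+\gamma_n\bY_n]$, so that~(\ref{eq:algo}) reads $\thn=(W_n\otimes I_d)\tilde\bth_n$, and note $\thnmu\in G^N$ by convexity of $G$ and row-stochasticity of $W_{n-1}$.

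First I would control the \emph{disagreement}. Row-stochasticity $W_n\un=\un$ makes the consensus subspace invariant under $W_n\otimes I_d$, hence $\othn=\Jo(W_n\otimes I_d)\Jo\tilde\bth_n$. Conditioning on $\cF_{n-1}$, using the conditional independence of $W_n$ and $\bY_n$ from Assumption~\ref{hyp:model}(c) and the definition of $\rho_n$ in Assumption~\ref{hyp:model}(b) (together with $(W_n^T\otimes I_d)\Jo(W_n\otimes I_d)=(W_n^T(I_N-\un\un^T/N)W_n)\otimes I_d$), one gets $\bE[|\othn|^2\mid\cF_{n-1}]\le\rho_n\,\bE[|\Jo\tilde\bth_n|^2\mid\cF_{n-1}]$. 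Nonexpansiveness of $P_{G^N}$ and $\thnmu\in G^N$ give $|\Jo\tilde\bth_n|\le|\othnmu|+\gamma_n|\bY_n|$, and with Assumption~\ref{hyp:model}(f) bounding $\bE[|\bY_n|^2\mid\cF_{n-1}]$, a Young's inequality with weight tuned to $1-\rho_n$ yields a recursion $\bE|\othn|^2\le(1-\tfrac12(1-\rho_n))\bE|\othnmu|^2+O(\gamma_n^2/(1-\rho_n))$. The connectivity/stepsize balance of Assumption~\ref{hyp:step}, namely~(\ref{eq:condrho}) and~(\ref{eq:condgamma}), pushes the stationary level to $O(n^{-2\alpha})$; since $\alpha>1/2$ this gives $\sum_n\bE|\othn|^2<\infty$, hence $\othn\to0$ a.s. by Borel--Cantelli. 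This summability is exactly what will render the loss of column-stochasticity harmless.

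Next I would derive the recursion for the average. Since $\un^TW_n\un=N$ but $\un^TW_n\neq\un^T$ in general, applying $\tfrac1N(\un^T\otimes I_d)$ to~(\ref{eq:algo}) gives $\athn=\la\tilde\bth_n\ra+\bs r_n$ with $\bs r_n:=\tfrac1N((\un^TW_n-\un^T)\otimes I_d)(\tilde\bth_n-\un\otimes\athnmu)$; by Assumption~\ref{hyp:model}(a),(c) this is a martingale increment, $\bE[\bs r_n\mid\cF_{n-1}]=0$, with $\bE|\bs r_n|^2=O(\bE|\othnmu|^2+\gamma_n^2)$, so $\sum_n\bE|\bs r_n|^2<\infty$ by the previous step. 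This is the promised ``noise-like'' effect of dropping column-stochasticity. Because $P_{G^N}$ acts coordinatewise and $G$ is convex, the first-order expansion $P_G[\theta+\gamma y]-\theta=\gamma(y-h)+o(\gamma)$ with $h\in\cN_G(\theta)$, combined with consensus ($\theta_{n-1,i}-\athnmu=O(|\othnmu|)\to0$) and the regularity of $\nabla f_i$ (Assumption~\ref{hyp:model}(d),(e)), lets me write $\la\tilde\bth_n\ra-\athnmu=\gamma_n(-\nabla f(\athnmu)-\bs h_n+\delta\bs M_n)+\gamma_n\bs\epsilon_n$ with $\bs h_n\in\cN_G(\athnmu)$, $\delta\bs M_n$ a zero-mean noise, and $\bs\epsilon_n\to0$. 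Collecting terms gives the constrained Robbins--Monro form $\athn=\athnmu+\gamma_n(-\nabla f(\athnmu)-\bs h_n)+\gamma_n\delta\bs M_n+\bs r_n+\gamma_n\bs\epsilon_n$, whose aggregate noise has summable conditional second moments (using $\sum_n\gamma_n^2<\infty$ from~(\ref{eq:condgamma}) and $\sum_n\bE|\bs r_n|^2<\infty$).

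Finally I would read this as a stochastic approximation of the differential variational inequality $\dot\theta\in-\nabla f(\theta)-\cN_G(\theta)$, i.e.\ the projected gradient flow on $G$. Under Assumption~\ref{hyp:KT} the set-valued drift is upper semicontinuous with nonempty convex compact values, so~\cite{benaim:2005} applies: the linearly interpolated process is a.s.\ an asymptotic pseudotrajectory of the induced flow, and its limit set is a.s.\ internally chain transitive. Taking $f$ as Lyapunov function --- along the flow $\tfrac{d}{dt}f(\theta)=-|\Pi_{T_G(\theta)}(-\nabla f(\theta))|^2\le0$, vanishing precisely on $\cL$ --- the hypothesis that $f(\cL)$ has empty interior excludes nontrivial chain-transitive sets off the equilibria, forcing the limit set of $(\athn)$ into $\cL$; connectedness of internally chain transitive sets then yields a.s.\ convergence of $(\athn)$ to a connected component of $\cL$. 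Combining with $\othn\to0$ and $\thn=\un\otimes\athn+\othn$ gives $\thn\to\{\un\otimes\theta:\theta\in\cL\}$ a.s. I expect the main obstacle to lie in the last two steps together: rigorously turning the coordinatewise projection onto $G$ into the normal-cone drift and certifying the technical hypotheses of~\cite{benaim:2005} (measurable selections, boundedness, and the Lyapunov/empty-interior argument), while simultaneously absorbing both the consensus error and the $\bs r_n$ term into a single asymptotically negligible perturbation.
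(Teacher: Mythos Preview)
Your global plan matches the paper's exactly: consensus via the $\rho_n$--recursion, a Robbins--Monro form for $\athn$ with the column-stochasticity defect absorbed as a square-summable martingale increment, and then the perturbed-DI machinery of~\cite{benaim:2005} with $f$ as Lyapunov function for~$\cL$. The consensus step and the treatment of $\bs r_n$ are essentially the paper's Lemma~\ref{lem:agreement} and the term $\xi_n^{(1)}$ of Appendix~\ref{app:average}.

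The genuine gap is the line ``the first-order expansion $P_G[\theta+\gamma y]-\theta=\gamma(y-h)+o(\gamma)$ with $h\in\cN_G(\theta)$'' leading to $\bs h_n\in\cN_G(\athnmu)$. No such expansion holds: what is exact is $\theta+\gamma y-P_G[\theta+\gamma y]\in\cN_G(P_G[\theta+\gamma y])$, so the normal cone is taken at the \emph{projected} point, not at $\theta$, and certainly not at $\athnmu$. The map $\theta\mapsto\cN_G(\theta)$ is only outer semicontinuous, so this discrepancy cannot be pushed into an $o(1)$ term $\bs\epsilon_n$. Worse, after averaging over $i$ and taking conditional expectation in $\bY_n$, you obtain an \emph{integral} of normal-cone vectors sitting at different boundary points $P_G[\theta_{n-1,i}+\gamma_n y_i]$; such an integral need not lie in any single normal cone. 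The paper spends Lemmas~\ref{lem:lambda}--\ref{lem:AA} on exactly this: it writes each normal vector via active-constraint gradients (this is where the qualification Assumption~\ref{hyp:KT}(c) is used essentially, not cosmetically), bounds the multipliers uniformly, truncates at $|y|\le\gamma^{-1/2}$ using tightness, and shows by a compactness argument that the union of active sets arising is contained in $A(\theta')$ for some $\theta'$ with $|\theta'-\athnmu|\to0$. The conclusion is only membership in the \emph{enlargement} $F^{\delta_n}(\athnmu)$, never in $F(\athnmu)$ itself---which is why the perturbed-solution framework is needed rather than a plain ODE comparison. A smaller point: your DI $\dot\theta\in-\nabla f(\theta)-\cN_G(\theta)$ has unbounded values and does not satisfy the standing hypotheses of~\cite{benaim:2005}; the paper truncates to $\{z\in\cN_G(\theta):|z|\le3\mu\}$, which is harmless once the multipliers are bounded. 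You correctly flag this step as the main obstacle; the missing idea is that it does not reduce to a Taylor expansion but to an active-set/qualification argument.
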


Theorem~\ref{the:cvc} establishes two points. First, a consensus is achieved as
$n$ tends to infinity, meaning that $\max_{i,j} |\theta_{n,i}-\theta_{n,j}|$
converges a.s. to zero. Second, the average estimate $\athn$ converges to the
set $\cL$ of KKT points.  As a consequence, if $\cL$ contains only isolated
points, sequence $\athn$ converges a.s. to one of these points.

In particular, when $f$ is convex, $\athn$ converges to the set of global
solutions to the minimization problem~(\ref{eq:pb}).  However, as already
remarked, our result is more general and does not rely on the convexity of
$f$. If $f$ is not convex, sequence $\athn$ does not necessarily converge to a
global solution. Nevertheless, it is well known that the KKT conditions are
satisfied by any local minimizer \cite{borwein:2006}.

The condition that $f(\cL)$ has an empty interior is satisfied in most practical cases.
From Sard's theorem, it holds as soon as $f$ is $d$ times continuously differentiable.

%%%%%%%%%%%%%%%%%%%%%%%%%%%%%%%%%%%%%%%%%%%%%%%%%%%%%%%%%%%%%%%%%%%%%%%%%%%%%%%%%%%
%%%%%%%%%%%%%%%%%%%%%%%%%%%%%%%%%%%%%%%%%%%%%%%%%%%%%%%%%%%%%%%%%%%%%%%%%%%%%%%%%%%
%%%%%%%%%%%%%%%%%%%%%%%%%%%%%%%%%%%%%%%%%%%%%%%%%%%%%%%%%%%%%%%%%%%%%%%%%%%%%%%%%%%
%%%%%%%%%%%%%%%%%%%%%%%%%%%%%%%%%%%%%%%%%%%%%%%%%%%%%%%%%%%%%%%%%%%%%%%%%%%%%%%%%%%
\section{Proof of Theorem~1}
\label{sec:proofcvc}

\subsection{Sketch of Proof}

We first provide some insights on the proof (all statements are made rigorous in 
the next subsections).

The proof is decomposed in the following steps. 

\begin{enumerate}
\item First, establish convergence to consensus with probability 1 (see
  Lemma~\ref{lem:agreement}).  In this step we show that iterating matrices
  $W_n$ yields to consensus whatever the behavior of sequence $(\thn)_{n\geq 1}$. See
  subsection~\ref{subsec:consensus}.

\item Show then that $\athn$ is ruled by the following discrete time dynamical
  system (see Proposition~\ref{prop:average} in subsection~\ref{subsec:avdyn}):
  \begin{multline}
    \label{eq:jojo}
  \athn = \athnmu - \gamma_n \nabla f(\athnmu) + \gamma_n g_{\gamma_n}(\thnmu)  \\
  +\gamma_n\xi_n+\gamma_n r_n
  \end{multline}
In order to give some insight, assume just for a moment that $g$ is identically $0$. 
In that case, one could write: 
  \[
  \frac{\athn - \athnmu}{\gamma_n} = -\nabla f(\athnmu) + \xi_n + r_n\,,
  \]
  and view $\athn$ as a noisy discrete approximation of the well studied
  Ordinary Differential Equation (ODE):
  \[
  \dot x = -\nabla f(x)\;,
  \]
  where $\dot x$ stands for the derivative of function $t\mapsto x(t)$.
  See, for instance, \cite{hirsh:smale:1974}.
  
  This line of reasoning is the so-called ``ODE'' method (see, for instance,
  \cite{borkar:2008}). If function $g$ was regular enough, one could still use the
  ODE method and study an ODE of the form $\dot x = -\nabla f(x) + h(x)$
  where $h$ is a function derived from $g$ (we skip the details, which are not
  important here).  Unfortunately, in our case, function $g$ is not regular enough. 
  In that case, the standard ODE method fails for two reasons:
  {\sl i)} such an ODE might have several solutions, {\sl ii)} it does no longer ensure that
    discretized versions of the ODE stay close to suitable solutions of
    the original ODE.

  % \begin{enumerate}
  % \item\label{item:dif1} S
  % \item\label{item:dif2} It does no longer ensure that
  %   discretized versions of the ODE stay close to suitable solutions of
  %   the original ODE.
  % \end{enumerate}

\item The framework of Differential Inclusions (DI) addresses these two
  issues. The ODE is replaced by the DI
  \[
  \dot x\in F(x)\,,
  \]
  (see~(\ref{eq:di}) in what follows), where, at any time instant $t$,  $F(x(t))$ is a \emph{set} of
  vectors instead of the single vector as in an ODE.
% This set of vector captures
%   the discontinuity of function $g$, incorporating all its possible values. 
An important property is that the set-valued function $F$ should now be
upper semicontinuous.
% Considering issue~\ref{item:dif1}, the flow (see function $\Phi_t$ in
%   subsection~\ref{subsec:di}) is a set-valued function that naturally takes into
%   account the lack of uniqueness.  
% Considering issue~\ref{item:dif2}, gathering
%   values of the vector field for close points in a single set allows a better
%   control on the discrete estimates (see the use of eq.~(\ref{eq:Fdelta}) in
%   Proposition~\ref{prop:average}). 
It remains to prove two assertions to finish the proof:
  \begin{enumerate}
  \item Show that the noisy discretized dynamical system (\ref{eq:jojo})
asymptotically behaves ``the same way'' as a solution to the DI.
To formalize this, we shall rely on the notion of \emph{perturbed differential inclusions}
of~\cite{benaim:2005}. We prove in Section~\ref{subsec:interp} that
the continuous-time process obtained from a suitable interpolation of (\ref{eq:jojo})
is in fact a perturbed solution to the DI. Using the results of~\cite{benaim:2005}
which we recall in Section~\ref{subsec:di}, the limit set of the interpolated process
can be characterized by simply studying the behavior of the solutions $x(t)$ to the DI for large $t$.

  \item The asymptotic behavior of the DI is addressed in Section
  \ref{subsec:didynamics} where it is shown that $f$ is a Lyapunov function for
  the set $\cL$ of KKT points under the dynamics induced by the DI.
  \end{enumerate}
  % The first point is addressed in subsection~\ref{subsec:interp} where it is
  % shown that, suitably interpolated, the dynamical system is a perturbed
  % solution of the abovementionned DI. It contains the most technical parts
  % gathered in technical lemmas. The second point is addressed in subsection
  % \ref{subsec:didynamics} where it is shown that $f$ is a Lyapunov function for
  % the set $\cL$ of KKT points under the dynamics induced by the DI.
\end{enumerate}

%Let us briefly recall some elements about Set-Valued Dynamical Systems.

\subsection{Preliminaries: Useful Facts about Set-Valued Dynamical Systems}
\label{subsec:di}

Before providing the details of the proof, we recall some useful facts about
perturbed differential inclusions.  All definitions and statements made in this
paragraph can be found in reference~\cite{benaim:2005}.  However, for the sake of
readability and completeness, it is worth recalling some facts.

Consider an arbitrary set-valued function $F$ which maps each point
$\theta\in\bR^d$ to a set $F(\theta)\subset\bR^d$.  Assume that $F$ satisfies
the following conditions:
\begin{cond}
  \label{cond:usc}
  The following holds:
  \begin{itemize}
  \item $F$ is a closed set-valued map \emph{i.e.}, $\{(\theta,y):y\in
    F(\theta)\}$ is a closed subset of $\bR^d\times \bR^d$.
  \item For any $\theta\in\bR^d$, $F(\theta)$ is a nonempty compact convex
    subset.
  \item There exists $c>0$ such that for any $\theta\in\bR^d$, $\sup_{z\in
      F(\theta)}|z|<c(1+|\theta| )$.
  \end{itemize}
\end{cond}
A function $x:\bR\to\bR^d$ is called a solution to the differential inclusion
\begin{equation}
  \label{eq:di}
  \frac{dx}{dt}\in F(x)
\end{equation}
if it is absolutely continuous and if $\frac{dx(t)}{dt}\in F(x(t))$ for
almost every $t\in\bR$. 
% For any $t\in\bR$, $\theta\in\bR^d$, define:
% $$
% \Phi_t(\theta) := \{ x(t)\,:\,x\textrm{ is a solution to~(\ref{eq:di})
%   s.t. }x(0)=\theta\}\ .
% $$
Let $\Lambda$ be a compact set in $\bR^d$.
%  A continuous function $V:\bR^d\to\bR$ is called
% a \emph{Lyapunov function} for $\Lambda$ if the following two conditions hold:
% $$
% \left\{
%   \begin{array}[h]{l}
%     \forall \theta\in\bR^d\backslash \Lambda, \forall t>0,\forall \theta'\in\Phi_t(\theta),\ V(\theta')<V(\theta) \\ \forall \theta\in \Lambda, \forall t\geq 0,\forall \theta'\in\Phi_t(\theta),\ V(\theta')\leq V(\theta)\ .
%   \end{array}
% \right.
% $$
A continuous function $V:\bR^d\to\bR$ is called a \emph{Lyapunov function} for $\Lambda$ if, 
for any solution $x$ to~(\ref{eq:di}) and for any $t>0$, 
$$
V(x(t)) \leq V(x(0))
$$
and where the inequality is strict whenever $x(0)\notin\Lambda$.

Finally, a function $y:[0,\infty)\to\bR^d$ is called a \emph{perturbed
  solution} to~(\ref{eq:di}) if it is absolutely continuous and if there exists
a locally integrable function $t\mapsto U(t)$ such that:
\begin{itemize}
\item For any $T>0$, $\lim_{t\to\infty}\sup_{0\leq v\leq T}\left|
    \int_t^{t+v}U(s)ds\right| =0$,
\item There exists a function $\delta:[0,\infty)\to[0,\infty)$ such that
  $\lim_{t\to\infty}\delta(t)=0$ and such that for almost every $t>0$,
  $\frac{dy(t)}{dt}-U(t)\in F^{\delta(t)}(y(t))$, where for any
  $\delta>0$:
\end{itemize}  \begin{equation}
    \label{eq:Fdelta}
   F^\delta(\theta) := \left\{ z\in \bR^d\,:\, \exists \theta', |\theta-\theta'|<\delta , \sd(z,F(\theta'))<\delta\right\}.
  \end{equation}

\begin{remark}
  Note that there is no guarantee that a perturbed solution remains close to a
  solution from a given time (for instance, $y(t) = \log(1+t)$ is a perturbed
  solution to $\dot x =0$). However, loosely speaking, $y$ is a perturbed
  solution if the behavior of $y$ on the interval $[t,t+T]$ shadows the
  differential inclusion for $t$ large enough, for each \emph{fixed} width $T$.
\end{remark}

The following result, found in reference~\cite{benaim:2005}, will be used in our
proofs. Denote by $\overline{\cS}$ the closure of a set $\cS$.
\begin{theo}[\cite{benaim:2005}]
  Let $V$ be a Lyapunov function for $\Lambda$. Assume that $V(\Lambda)$ has an
  empty interior.  Let $y$ be a bounded pertubed solution to~(\ref{eq:di}). Then,
$$
\bigcap_{t\geq 0} \overline{y([t,\infty))} \subset \Lambda\ .
$$
\label{the:benaim}
\end{theo}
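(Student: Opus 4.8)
The plan is to factor the argument into a purely topological statement about the trajectory and a dynamical statement about the inclusion. Write $L:=\bigcap_{t\ge0}\overline{y([t,\infty))}$ for the limit set of the trajectory; boundedness of $y$ makes $L$ nonempty and compact (and, as a nested intersection of connected sets, connected), and $x\in L$ if and only if $y(t_k)\to x$ along some $t_k\to\infty$. The two milestones I would aim for are: (i) $L$ is \emph{internally chain transitive} for the inclusion~(\ref{eq:di}); and (ii) any internally chain transitive set carrying a Lyapunov function $V$ for $\Lambda$, with $V(\Lambda)$ of empty interior, is contained in $\Lambda$. Together these give $L\subset\Lambda$, which is the assertion.

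Step~(i) is the heart of the argument and, I expect, the main obstacle, since it is here that the hypothesis ``$y$ is a \emph{perturbed} solution'' must be converted into genuine solutions of the unperturbed inclusion. Fix $x,x'\in L$ and $\epsilon,T>0$; the goal is an $(\epsilon,T)$-chain of solutions of~(\ref{eq:di}) running from $x$ to $x'$ inside $L$. The mechanism is a shift-and-limit argument: along a sequence $t_k\to\infty$ realising a prescribed finite itinerary through $L$, consider the shifted pieces $y_k(\cdot):=y(t_k+\cdot)$ on a fixed window $[0,T]$. The two defining properties of a perturbed solution, namely $\int_t^{t+T}U(s)\,ds\to0$ and $\dot y-U\in F^{\delta(t)}(y)$ with $\delta(t)\to0$, together with the linear growth bound in Condition~\ref{cond:usc}, make $(y_k)$ uniformly equicontinuous on $[0,T]$ (the $\int U$ contribution being negligible and the $F^{\delta}$-part uniformly bounded on the bounded set where $y$ lives). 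By Arzel\`a--Ascoli a subsequence converges uniformly to an absolutely continuous $z$, and the relaxation~(\ref{eq:Fdelta}) combined with the closedness and convexity of $F$ in Condition~\ref{cond:usc} lets one pass to the limit and conclude $\dot z\in F(z)$, so $z$ is a true solution, necessarily valued in $L$. The same device yields invariance of $L$ (a complete solution through each point) and, by concatenating finitely many windows, the desired chain. The delicate point throughout is the uniform control of the cumulative effect of $U$ and of the $F^{\delta}$-relaxation over a window of fixed width, which is precisely what the two perturbed-solution conditions are engineered to supply.

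Step~(ii) is then a Lyapunov/attractor argument on the compact invariant set $L$. First I would pin the extreme levels. The maximal level set $K:=\{x\in L:V(x)=\max_L V\}$ is backward invariant, because along a complete orbit $V$ is monotone and cannot exceed its maximum on $L$; hence each $x\in K$ is, for every $s>0$, the time-$s$ image of a point $\gamma(-s)\in K$ at the same level, and the strict-decrease clause off $\Lambda$ forces $\gamma(-s)\in\Lambda$, so letting $s\to0^+$ and using that $\Lambda$ is closed gives $K\subset\Lambda$; the symmetric argument handles the minimal level. To exclude a nondegenerate range $\max_L V>\min_L V$, I would invoke the attractor-free character of internally chain transitive sets: such a set admits no proper attractor of the induced semiflow. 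Using that $V(\Lambda)$ has empty interior (and that $V(\Lambda)$ is compact, since $\Lambda$ is) one can choose a level $c$ strictly between the two extrema with a whole band around $c$ disjoint from $\Lambda$; on the compact piece of $L$ in that band, a compactness argument over solutions (again Condition~\ref{cond:usc}) upgrades the strict decrease into a \emph{uniform} drop of $V$ per unit time, which drives forward orbits through level $c$. Consequently the positively invariant sublevel set $\{V\le c\}\cap L$ attracts the open neighbourhood $\{V<c+\eta\}\cap L$ and is a proper, nonempty attractor of the flow on $L$, contradicting attractor-freeness. Hence $V$ is constant on $L$; and a point of $L\setminus\Lambda$ would strictly decrease this constant along a solution that stays in $L$ by invariance, which is impossible. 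Therefore $L\subset\Lambda$, as claimed.
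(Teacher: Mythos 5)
Your proposal is correct and is essentially the paper's own approach: the paper proves Theorem~\ref{the:benaim} by simply invoking Theorems 4.2 and 4.3 together with Proposition 3.27 of \cite{benaim:2005}, and your two milestones are exactly the content of those cited results (the limit set of a bounded perturbed solution to~(\ref{eq:di}) is internally chain transitive; an internally chain transitive set admitting a Lyapunov function $V$ for $\Lambda$ with $V(\Lambda)$ of empty interior is contained in $\Lambda$). Your shift-and-limit Arzel\`a--Ascoli argument and the attractor-freeness contradiction faithfully reconstruct the proofs from that reference, so you have unfolded, rather than replaced, the paper's one-line citation.
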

\begin{proof}
  The result is a consequence of Theorem 4.2, Theorem 4.3 and Proposition 3.27
  in~\cite{benaim:2005}.
\end{proof}

\subsection{Agreement between Agents}
\label{subsec:consensus}

% Denote by $J:=(\un\un^T/N)\otimes I_d$ the projector onto the \emph{consensus
%   subspace} $\left\{ \un\otimes \theta : \theta\in\bR^d\right\}$ and by
% $\Jo:=I_{dN}-J$ the projector onto the orthogonal subspace.  For any vector
% $\bth\in\bR^{dN}$, remark that $\bth =\un\otimes \ath + \Jo\bth$.

% the sequence $(\othn)_{n\geq 1}=(\thn-\un\otimes\athn)_{n \geq 1}$ converges
% almost-surely to zero (hence in $L^2$ since $\othn$ is bounded).
\begin{lemma}[Agreement]
\label{lem:agreement}
Assume that $G$ is a compact convex set.  Under Assumptions~\ref{hyp:model}
and~\ref{hyp:step}, $\sum_{n\geq 1}\bE\left|\othn\right|^2<\infty$. 
As a consequence, $\othn$ converges to zero a.s..
\end{lemma}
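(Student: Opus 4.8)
The plan is to bound $\bE|\othn|^2$ directly by writing down a recursion for the disagreement vector and exploiting the contraction property encoded in Assumption~\ref{hyp:model}(b), then summing the resulting inequality. The starting point is the algorithm~(\ref{eq:algo}). Writing $\bs{\tilde\theta}_n := P_{G^N}[\thnmu+\gamma_n\bY_n]$ for the temporary estimate, we have $\thn = (W_n\otimes I_d)\bs{\tilde\theta}_n$, so applying $\Jo = I_{dN}-J$ gives $\othn = \Jo(W_n\otimes I_d)\bs{\tilde\theta}_n$. The key algebraic fact is that $J$ commutes with $W_n\otimes I_d$ in the relevant sense because $W_n$ is row-stochastic ($W_n\un=\un$), so that $(W_n\otimes I_d)J = J$; hence $\Jo(W_n\otimes I_d) = (W_n\otimes I_d) - J = \Jo(W_n\otimes I_d)\Jo$ after using row-stochasticity once more. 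This lets me replace $\bs{\tilde\theta}_n$ by its disagreement part and obtain $\othn = \Jo(W_n\otimes I_d)\Jo\,\bs{\tilde\theta}_n$, eliminating the consensus component.

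First I would take conditional expectation of $|\othn|^2$ given $\cF_{n-1}$. Because $W_n$ is conditionally independent of the past (Assumption~\ref{hyp:model}(c)) and $\bs{\tilde\theta}_n$ depends on $\bY_n$, I would need to be slightly careful about the order of conditioning; the clean route is to condition first on $\cF_{n-1}$ together with $\bY_n$, so that $\bs{\tilde\theta}_n$ is fixed and only $W_n$ is random. Then $\bE[|\othn|^2 \mid \cF_{n-1},\bY_n] = \bs{\tilde\theta}_n^T\Jo\,\bE[(W_n^T\otimes I_d)\Jo(W_n\otimes I_d)]\,\Jo\bs{\tilde\theta}_n$, and since $\Jo = (I_N-\un\un^T/N)\otimes I_d$, the inner matrix expectation is $\bE[W_n^T(I_N-\un\un^T/N)W_n]\otimes I_d$, whose spectral norm is exactly $\rho_n$ by definition in Assumption~\ref{hyp:model}(b). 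This yields the contraction
\begin{equation}
\bE[|\othn|^2 \mid \cF_{n-1},\bY_n] \leq \rho_n\,|\Jo\bs{\tilde\theta}_n|^2.
\label{eq:contraction}
\end{equation}

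Next I would control $|\Jo\bs{\tilde\theta}_n|^2$ in terms of $|\othnmu|^2$ and the stepsize. Since $P_{G^N}$ is nonexpansive and $\othnmu = \Jo\thnmu$, I would write $\bs{\tilde\theta}_n = P_{G^N}[\thnmu+\gamma_n\bY_n]$ and use the nonexpansiveness together with the decomposition $|\Jo\bs{\tilde\theta}_n|^2 \leq (1+\gamma_n)|\othnmu|^2 + C\gamma_n^2(1+|\bY_n|^2)$ type of bound obtained by expanding the square and applying Young's inequality; the cross term is handled because $\Jo$ is a projection. Taking full expectations and using Assumption~\ref{hyp:model}(f) (which bounds $\bE|\bY_n|^2$ uniformly, since $\thnmu\in G^N$ by construction of the projection) to control the $\gamma_n^2$ term, I arrive at a recursion of the form $u_n \leq \rho_n(1+C\gamma_n)u_{n-1} + C'\gamma_n^2$ where $u_n := \bE|\othn|^2$.

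The main obstacle, and the crux of the argument, is summing this recursion. Writing $\rho_n(1+C\gamma_n) \leq 1-(1-\rho_n)+C\gamma_n$ and using Assumption~\ref{hyp:model}(b) (so $1-\rho_n \gg 1/n$) together with Assumption~\ref{hyp:step}(b) (so that $\gamma_n = \oo(1-\rho_n)$ via~(\ref{eq:condrho})), the effective contraction factor is $1-\kappa_n$ with $\kappa_n := (1-\rho_n)-C\gamma_n$ eventually positive and satisfying $n\kappa_n\to\infty$. The standard way to conclude $\sum_n u_n<\infty$ is to show that $u_n = \cO(\gamma_n^2/\kappa_n) = \cO(\gamma_n/n^{\alpha})$ by comparison, and then verify $\sum_n \gamma_n^2/\kappa_n<\infty$: indeed by~(\ref{eq:condrho}) we have $\kappa_n \geq c\,n^\alpha\gamma_n$ for large $n$, so $\gamma_n^2/\kappa_n \leq \gamma_n/(c\,n^\alpha)$, and since $n^\alpha\gamma_n\to 0$ with $\alpha>1/2$ one checks summability carefully (this is where the precise interplay of the two assumptions is essential). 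Once $\sum_n\bE|\othn|^2<\infty$ is established, the a.s.\ convergence $\othn\to 0$ follows immediately from the Borel-Cantelli argument, i.e.\ $\sum_n|\othn|^2<\infty$ a.s.\ by monotone convergence, forcing the summand to vanish. I expect the delicate bookkeeping in this final summation — matching the decay rates governed by $\alpha$, $\rho_n$, and $\gamma_n$ — to be the real work, while the contraction step~(\ref{eq:contraction}) is conceptually the heart of the proof.
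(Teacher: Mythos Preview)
Your overall strategy matches the paper's: exploit row-stochasticity to write $\othn = \Jo(W_n\otimes I_d)(\othnmu+\gamma_n\bsZ_n)$ (where $\bsZ_n=\gamma_n^{-1}(P_{G^N}[\thnmu+\gamma_n\bY_n]-\thnmu)$, with $|\bsZ_n|\le|\bY_n|$), condition on $(\cF_{n-1},\bY_n)$ to extract the factor $\rho_n$, and then analyze a scalar recursion for $v_n:=\bE|\othn|^2$. The algebraic identity $\Jo(W_n\otimes I_d)=\Jo(W_n\otimes I_d)\Jo$ and the contraction~(\ref{eq:contraction}) are exactly right.

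There is, however, a genuine gap at the Young's-inequality step. You claim a bound of the form
\[
|\Jo\bs{\tilde\theta}_n|^2 \le (1+\gamma_n)|\othnmu|^2 + C\gamma_n^2(1+|\bY_n|^2),
\]
but no single application of Young's inequality delivers both features simultaneously. Writing $|\Jo\bs{\tilde\theta}_n|^2=|\othnmu|^2+2\gamma_n\langle\othnmu,\Jo\bsZ_n\rangle+\gamma_n^2|\Jo\bsZ_n|^2$ and bounding the cross term by $2\gamma_n|\othnmu|\,|\bsZ_n|\le \epsilon|\othnmu|^2+\gamma_n^2|\bsZ_n|^2/\epsilon$, you are forced to choose: either $\epsilon=\gamma_n$, which gives the factor $(1+\gamma_n)$ but an error of order $\gamma_n$ (not $\gamma_n^2$), or $\epsilon$ of order one, which gives a $\gamma_n^2$ error but a contraction factor like $2\rho_n$, useless when $\rho_n$ is close to~$1$. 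With the first choice the recursion becomes $v_n\le\rho_n(1+C\gamma_n)v_{n-1}+C'\gamma_n$, and the comparison argument then yields only $v_n=\cO(\gamma_n/(1-\rho_n))=\cO(n^{-\alpha})$, which is \emph{not} summable for $\alpha\in(1/2,1]$.

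The paper avoids this trap by taking expectations first and using Cauchy--Schwarz, $\bE[|\othnmu|\,|\bsZ_n|]\le\sqrt{C\,v_{n-1}}$, to keep the cross term as $\tilde\gamma_n\sqrt{v_{n-1}}$ rather than absorbing it linearly. The resulting recursion $v_n\le\rho_n v_{n-1}+\tilde\gamma_n\sqrt{v_{n-1}}+\tilde\gamma_n^2$ is then handled by the substitution $u_n=n^{2\alpha}v_n$ and a direct study of the map $u\mapsto -a_n u+b_n\sqrt{u}+c_n$, yielding $v_n=\cO(n^{-2\alpha})$, which is summable. Your linear-recursion route \emph{can} be repaired---take $\epsilon_n$ proportional to $(1-\rho_n)$ in Young's inequality, obtaining contraction factor $(1+\rho_n)/2$ and error $\cO(\gamma_n^2/(1-\rho_n))$, whence $v_n=\cO(\gamma_n^2/(1-\rho_n)^2)=\cO(n^{-2\alpha})$---but not with the specific bound you wrote down.
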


\begin{proof}
  We rewrite~(\ref{eq:algo}) as $\thn = (W_n\otimes I_d)(\thnmu+\gamma_n
  \bsZ_n)$ where
  \begin{equation}
    \bsZ_n := \frac{P_{G^N}\left[\thnmu+\gamma_{n}\bY_n\right]-\thnmu}{\gamma_n}\ .
    \label{eq:Zn}
  \end{equation}
  Before going into the details of the proof of Lemma~\ref{lem:agreement}, it is
  worth noting that $|\bsZ_n|^2 \leq |\bY_n|^2$ (just note that
  $\thnmu=P_{G^N}[\thnmu]$ and use the fact that $G^N$ is convex).  By
  Assumption~\ref{hyp:model}(f), the sequence $(\bE[|\bsZ_n|^2])_{n\geq 1}$ is
  therefore bounded.

  We now study $\othn$. As $W_n\un=\un$, it is straightforward to show that
  $\Jo(W_n\otimes I_d)=\Jo(W_n\otimes I_d)\Jo$.  As a consequence, $\othn =
  \Jo(W_n\otimes I_d)(\othnmu+\gamma_n \bsZ_n)$. Using the so-called ``mixed
  product rule'', ($ [A\otimes B]\cdot[C\otimes D] = AC\otimes BD $), we
  expand the square Euclidean norm of the latter vector:
$$
|\othn|^2 = (\othnmu+\gamma_n \bsZ_n)^T\cW_n(\othnmu+\gamma_n \bsZ_n)\ .
$$
where $\cW_n :=(W_n^T\otimes I_d)\left((I_N-\un\un^T/N)\otimes I_d\right)^2(W_n\otimes I_d)$.
Note that $\cW_n = \{W_n^T(I_N-\un\un^T/N)W_n\}\otimes
I_d$.
% $$
% |\othn|^2 = (\othnmu+\gamma_n \bsZ_n)^T(\{W_n^T(I_N-\un\un^T/N)W_n\}\otimes
% I_d)(\othnmu+\gamma_n \bsZ_n)\ .
% $$
Integrate both sides of the above equation w.r.t. the random variable $W_n$:
$$
\bE[|\othn|^2\,|\cF_{n-1},\bsZ_n] \leq \rho_n |\othnmu+\gamma_n \bsZ_n|^2\ .
$$
Expand the righthand side and take the expectation. Using the fact that $\rho_n<1$ for
$n$ large enough,
\begin{multline*}
  \bE[|\othn|^2] \leq \rho_n \bE[|\othnmu|^2]+2\gamma_n\bE[|\othnmu|\,
|\bsZ_n|] \\+\gamma_n^2\bE[|\bsZ_n|^2]\ .
\end{multline*}
As $\bE[|\bsZ_n|^2]$ is uniformly bounded, we obtain from Cauchy-Schwartz's
inequality:
$$
\bE[|\othn|^2] \leq \rho_n
\bE[|\othnmu|^2+\gamma_n\sqrt{C\,\bE[|\othnmu|^2]}+\gamma_n^2C
$$
for some constant $C>0$.

Let us denote $v_n:=\bE[|J^\perp\bth_n|^2]$. Using $\tilde\gamma_n=\sqrt
C\gamma_n$, which also fulfills Assumption~\ref{hyp:step}, we get:
\begin{eqnarray}
  v_{n}&\leq& \rho_n v_{n-1} + \tilde\gamma_n\sqrt{v_{n-1}} + \tilde\gamma_n^2\ .
\end{eqnarray}
Let $u_n:=n^{2\alpha}v_n$ for some $\alpha>1/2$ satisfying~(\ref{eq:condgamma})
and~(\ref{eq:condrho}). Then,
\begin{multline}
  u_n \leq \left(1+\frac1{n-1}\right)^{2\alpha}\rho_n u_{n-1} \\+ 
  n^\alpha\tilde\gamma_n\left(1+\frac1{n-1}\right)^{\alpha}\sqrt{u_{n-1}} + n^{2\alpha}\tilde\gamma_n^2\ .
\end{multline}
This implies in turn:
\[
u_n - u_{n-1} \leq \left(-a_nu_{n-1} +
  b_n\sqrt{u_{n-1}}+c_n\right)n^\alpha\tilde\gamma_n\,,
\]
% \[
% u_{n+1} - u_n \leq
% \left(-\frac{1-(1+1/n)^{2\alpha}\rho_n}{n^\alpha\gamma_n}u_n +
%   (1+1/n)^{2\alpha}\sqrt{u_n}+\gamma_n(1+1/n)^{\alpha}(n+1)^\alpha\right)n^\alpha\gamma_n
% \]
where $b_n = (1+\frac 1{n-1})^{2\alpha}$, $a_n =
\frac{1-\sqrt{b_n}\rho_n}{n^\alpha\tilde\gamma_n}$, and $c_n =
n^\alpha\tilde\gamma_n$. A straightforward analysis of function $\phi_n:u\mapsto -a_nu
+ b_n\sqrt u + c_n$ shows that $u>t_n$ implies $\phi_n(u) < 0$ where $t_n:=
(b_n/a_n+c_n/b_n)^2$. Note that, using Assumption~\ref{hyp:model}(b), $a_n\sim
\frac{1-\rho_n}{n^\alpha\tilde\gamma_n}$ and using Assumption~\ref{hyp:step}, $t_n$ is
bounded above, say by a constant $K>0$. Moreover, when $u\leq t_n$,
$\phi_n(u)\leq \phi_n(b_n/2a_n) = c_n + {b_n}^2/4{a_n}$. Notice again that
$\phi_n(b_n/2a_n)$ is bounded above, say by a constant $L>0$. We have proved
that if $u_{n-1}\leq K$ then $u_n\leq K+L$ and if $u_{n-1}> K$, $u_n\leq
u_{n-1}$. This implies that $u_n\leq\max(K+L,u_0)$. Hence $\sum v_n<\infty$.
\end{proof}

Lemma~\ref{lem:agreement} proves that agents asymptotically reach an agreement
on their estimate. Another way to state Lemma~\ref{lem:agreement} is to write
that $\max_{i,j=1\cdots N} |\theta_{n,i}-\theta_{n,j}|$ converges a.s. to zero
as $n$ tends to infinity.  Therefore, the asymptotic analysis of the whole
vector $\bs\theta_n$ now reduces to the study of the average $\athn =
N^{-1}\sum_{i=1}^N\theta_{n,i}$.

\subsection{Average Estimate}
\label{subsec:avdyn}

% We now study the average estimate $\athn =
% N^{-1}\sum_{i=1}^N\theta_{n,i}$. The main result of this paragraph is given in
% Proposition~\ref{prop:average} below. In the sequel,

For any $\gamma>0$, $\bth\in G^N$, define $g_\gamma(\bth) := \gamma^{-1}(\frac{\un^T}N\otimes I_d)\int\left(P_{G^N}[\bth+\gamma
    \by]-\bth-\gamma \by\right)\sd\mu_{\bth}(\by)$.
Under Assumption~\ref{hyp:model}(c), this means that:
$$
g_{\gamma_n}(\thnmu) \!=\! %{\gamma_n}^{-1} \left(\frac{\un^T}N\otimes I_d\right)
\frac{\la \bE \left[\!P_{G^N}[\thnmu\!+\!\gamma_n\bY_n]\!-\!\thnmu\!-\!\gamma_n \bY_n\big|\cF\nmu\right]\ra}{\gamma_n} .
$$
\begin{prop}
  \label{prop:average}
  Under Assumptions~\ref{hyp:model},~\ref{hyp:step},~\ref{hyp:KT}, there exists
  two stochastic processes $(\xi_n)_{n\geq 1}$, $(r_n)_{n\geq 1}$ such that for any $n\geq 1$:
  \begin{equation}
    \athn = \athnmu - \gamma_n \nabla f(\athnmu) + \gamma_n g_{\gamma_n}(\thnmu) +\gamma_n\xi_n+\gamma_n r_n
    \label{eq:rmc}
  \end{equation}
  and satisfying w.p.1:
  \begin{gather}
    % \sup_{n\geq 1}\left|\sum_{k=1}^n \gamma_k\xi_k\right|<\infty\\
    \lim_{n\to\infty}\sup_{k\geq n}\left|\sum_{\ell=n}^k \gamma_\ell\, \xi_\ell\right|=0 \label{eq:supxi}\\
    \lim_{n\to\infty}r_n=0\ . \nonumber
  \end{gather}
\end{prop}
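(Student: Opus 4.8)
The plan is to apply the averaging map $\la\cdot\ra=\frac1N(\un^T\otimes I_d)$ directly to the one-step recursion~(\ref{eq:algo}) and to sort the resulting terms into a gradient drift, the projection bias $g_{\gamma_n}$, a vanishing remainder $r_n$, and a centred noise whose increments accumulate to zero. Writing the post-local estimate as $\tilde{\bs\theta}_n:=P_{G^N}[\thnmu+\gamma_n\bY_n]$ and using the mixed product rule gives $\athn=\frac1N\big((\un^TW_n)\otimes I_d\big)\tilde{\bs\theta}_n$. I would split $\un^TW_n=\un^T+(\un^TW_n-\un^T)$ to obtain
\[
\athn=\la\tilde{\bs\theta}_n\ra+\eta_n,\qquad \eta_n:=\tfrac1N\big((\un^TW_n-\un^T)\otimes I_d\big)\tilde{\bs\theta}_n,
\]
where $\eta_n$ is precisely the extra term produced by the loss of column stochasticity. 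Since every iterate is a convex combination of points of $G$, we have $\thn\in G^N$ and $\athnmu\in G$, which I will use repeatedly.

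For the ideal average, write $\tilde{\bs\theta}_n=\thnmu+\gamma_n\bY_n+\Delta_n$ with projection correction $\Delta_n:=P_{G^N}[\thnmu+\gamma_n\bY_n]-\thnmu-\gamma_n\bY_n$, which satisfies $|\Delta_n|\le\gamma_n|\bY_n|$ because $\thnmu\in G^N$ and $G^N$ is convex. Averaging gives $\la\tilde{\bs\theta}_n\ra=\athnmu+\gamma_n\la\bY_n\ra+\la\Delta_n\ra$. By Assumption~\ref{hyp:model}(e), $\bE[\la\bY_n\ra\,|\,\cF\nmu]=-\frac1N\sum_i\nabla f_i(\theta_{n-1,i})$, which I rewrite as $-\nabla f(\athnmu)+r_n$ with $r_n:=\frac1N\sum_i(\nabla f_i(\athnmu)-\nabla f_i(\theta_{n-1,i}))$, using $\nabla f=\frac1N\sum_i\nabla f_i$; and by the definition of $g_{\gamma_n}$ together with Assumption~\ref{hyp:model}(c), $\bE[\la\Delta_n\ra\,|\,\cF\nmu]=\gamma_n g_{\gamma_n}(\thnmu)$. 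Collecting conditional means and defining the centred increments $\gamma_n\xi_n^Y:=\gamma_n(\la\bY_n\ra-\bE[\la\bY_n\ra|\cF\nmu])$ and $\gamma_n\xi_n^\Delta:=\la\Delta_n\ra-\bE[\la\Delta_n\ra|\cF\nmu]$ reproduces~(\ref{eq:rmc}) with $\gamma_n\xi_n:=\gamma_n\xi_n^Y+\gamma_n\xi_n^\Delta+\eta_n$.

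Two facts are then essentially routine. First, $r_n\to0$ a.s.: by Lemma~\ref{lem:agreement} the disagreement $\othnmu\to0$, so $\theta_{n-1,i}-\athnmu\to0$; both points lie in the compact set $G$ on which each $\nabla f_i$ is uniformly continuous, hence $r_n\to0$. Second, $\xi_n^Y$ and $\xi_n^\Delta$ are martingale increments with conditionally bounded second moments (from $|\Delta_n|\le\gamma_n|\bY_n|$ and Assumption~\ref{hyp:model}(f)), so $\bE|\gamma_\ell\xi_\ell^Y|^2$ and $\bE|\gamma_\ell\xi_\ell^\Delta|^2$ are of order $\gamma_\ell^2$ and therefore summable, since~(\ref{eq:condgamma}) implies $\sum_n\gamma_n^2<\infty$; the $L^2$-bounded martingales $\sum_\ell\gamma_\ell\xi_\ell^Y$ and $\sum_\ell\gamma_\ell\xi_\ell^\Delta$ thus converge a.s.

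The hard part will be the column term $\eta_n$, because it is of order $1$ rather than $\gamma_n$ and so cannot be absorbed into a $\gamma_n$-scaled noise by brute force. The key observation is that row stochasticity forces $\sum_j((\un^TW_n)_j-1)=\un^TW_n\un-N=0$, so the consensus part of $\tilde{\bs\theta}_n$ cancels in $\eta_n$ and only its disagreement component survives: $\eta_n=\frac1N\sum_j((\un^TW_n)_j-1)(\tilde\theta_{n,j}-\la\tilde{\bs\theta}_n\ra)$, whence $|\eta_n|\le C|\Jo\tilde{\bs\theta}_n|$. Using $\Jo\tilde{\bs\theta}_n=\othnmu+\gamma_n\Jo\bY_n+\Jo\Delta_n$ I would bound $\bE|\Jo\tilde{\bs\theta}_n|^2\le C(\bE|\othnmu|^2+\gamma_n^2\bE|\bY_n|^2)$, which is summable by Lemma~\ref{lem:agreement} and~(\ref{eq:condgamma}); hence $\sum_n\bE|\eta_n|^2<\infty$. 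Finally $\eta_n$ is itself a martingale increment: conditionally on $\cF\nmu$ the independent matrix $W_n$ is independent of $\tilde{\bs\theta}_n$, and $\un^T\bE(W_n)=\un^T$ gives $\bE[\eta_n|\cF\nmu]=0$. Thus $\sum_\ell\eta_\ell$ is an $L^2$-bounded martingale that converges a.s., and adding the three convergent series shows that $\sum_\ell\gamma_\ell\xi_\ell$ converges a.s., which is equivalent to~(\ref{eq:supxi}).
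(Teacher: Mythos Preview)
Your argument is correct and matches the paper's proof almost term for term: your $\eta_n$ is the paper's $\gamma_n\xi_n^{(1)}$, your $\xi_n^Y+\xi_n^\Delta$ is the paper's $\xi_n^{(2)}$ (written there via $\bs Z_n=\gamma_n^{-1}(\tilde{\bs\theta}_n-\thnmu)$), and the key cancellation you invoke---that row stochasticity forces $((\un^TW_n-\un^T)\otimes I_d)J=0$, so only the disagreement part of $\tilde{\bs\theta}_n$ survives in $\eta_n$---is exactly the step the paper uses to reduce $\gamma_n\xi_n^{(1)}$ to an expression in $\othnmu+\gamma_n\bs Z_n$. The only cosmetic difference is that the paper bounds $\bE|M_n|^2$ by integrating the quadratic form against $\bE(\cV_k)$, whereas you bound $|\eta_n|$ pointwise by $C|\Jo\tilde{\bs\theta}_n|$ using that the entries of $W_n$ lie in $[0,1]$; both routes give summability from Lemma~\ref{lem:agreement} and $\sum_n\gamma_n^2<\infty$.
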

The proof is provided in Appendix~\ref{app:average}.

Note that the third term in the righthand side of~(\ref{eq:rmc}) is zero
whenever $\thnmu+\gamma_n\bY_n$ lies in $G^N$ \emph{i.e.}, when the projector is
inoperant. In order to have some insights, assume just for a moment that this
holds for any $n$ after a certain value. In this case, equation~(\ref{eq:rmc})
simply becomes
\begin{equation}
  \athn = \athnmu - \gamma_n \nabla f(\athnmu)+\gamma_n\xi_n+\gamma_n r_n\ .
  \label{eq:athnsimpl}
\end{equation}
In this case, by the continuity of $\nabla f$ and using the above conditions on
the sequences $\xi_n$ and $r_n$, the asymptotic behavior of sequence $\athn$ can
be directly characterized using classical stochastic approximation results
(see~\cite{kushner:2003,andrieu:2005,benaim:1999,delyon:2000}).  Indeed, a
sequence $\athn$ satisfying~(\ref{eq:athnsimpl}) converges to the set of
critical points of $f$.  Nevertheless, the projector $P_{G^N}$ is generally
active in practice, so that the term $g_{\gamma_n}(\thnmu)$ may be nonzero
infinitely often.  This additional term raises at least two problems. First, it
depends on the whole vector $\thn$ and not only on the average $\athn$:
equation~(\ref{eq:rmc}) looks thus nothing like a usual iteration of a
stochastic approximation algorithm. Second, $g_\gamma(\bth)$ is not a continous
function of $\bth$, whereas standard approaches often assume the continuity of
the mean field of the stochastic approximation algorithm.

\subsection{Interpolated Process}
\label{subsec:interp}

Define $\mu:= \sup_{\bth\in G^N}\int |\by|\sd\mu_{\bth}(\by)$. Define the following
set-valued function $F$ on $\bR^d$ which maps any  $\theta$ to the
set:
\begin{equation}
  F(\theta) := \left\{ -\nabla f(\theta)-z  : z\in\cN_G(\theta),\, |z|\leq 3\mu\right\}\ . \label{eq:F}
\end{equation}
Using the fact that $f$ is continuously differentiable and that $G$ is closed
and convex, it can be shown that $F$ satisfies Condition~1\footnote{As a purely
  technical point, note that the third point in Condition~1 is satified only if
  $|\nabla f(\theta)|$ increases at most at linear speed when
  $|\theta|\to\infty$, which has of course no reason to be true in general. This
  is however unimportant, as the values of $\theta$ will always be restricted to
  the bounded set $G$ in the sequel.  Moreover, for $\theta\notin G$, one can
  always redefine $F(\theta)$ as in~(\ref{eq:F}) but replacing $f$ with $\varphi
  \circ f$ where $\varphi$ is a slowly increasing map chosen such that
  Condition~1 holds for $\varphi \circ f$. }.
% \begin{lemma}
%   Assume that $f$ is continuously differentiable, and $G$ is a closed convex
%   set; then $F$ satisfies Condition~1.
% \end{lemma}
% \begin{proof}
%   Let $\theta_n$ denote a sequence of points in $\bR^d$ converging to $\theta$
%   and $v_n$ denote a sequence of vectors in $F(\theta_n)$ converging to
%   $v\in\bR^d$. Then: $v_n = -\nabla f(P_G(\theta_n)) - z_n$ where $z_n$ belongs
%   to $\cN_G(P_G(\theta))$. By continuity of $P_G$ and $\nabla f$, $-\nabla
%   f(P_G(\theta_n))$ converges to $-\nabla f(P_G(\theta))$. As a consequence,
%   $z_n$ converges to $-v - \nabla f(P_G(\theta))$. Using the well known fact
%   that $\theta\mapsto\cN_G(\theta)$ is a closed set-valued map, we conclude that
%   $v\in F(P_G(\theta))$.
% \end{proof}
Recall notation
$F^\delta(\theta)$ in~(\ref{eq:Fdelta}).  Consider stochastic processes
$(\xi_n,r_n)_{n\geq 1}$ as in Proposition~\ref{prop:average}.
\begin{prop}
  \label{prop:gF}
Under Assumptions~\ref{hyp:model}, \ref{hyp:step}, \ref{hyp:KT}, 
there exists a sequence of random variables $(\delta_n)_{n\geq 1}$ 
converging a.s. to zero and an integer $n_0$ such that for any $n\geq n_0$,
$$
\frac{\athn - \athnmu}{\gamma_n} -\xi_n- r_n \in F^{\delta_n}(\athnmu)\ .
$$
\end{prop}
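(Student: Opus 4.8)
The plan is to read the left-hand side off Proposition~\ref{prop:average} and thereby reduce the membership to a single statement about the vector $g_{\gamma_n}(\thnmu)$. Indeed, \eqref{eq:rmc} gives
\[
\frac{\athn-\athnmu}{\gamma_n}-\xi_n-r_n = -\nabla f(\athnmu)+g_{\gamma_n}(\thnmu),
\]
so the noise terms cancel and, recalling \eqref{eq:F}, it suffices to produce $\delta_n\to 0$ a.s.\ with $-\nabla f(\athnmu)+g_{\gamma_n}(\thnmu)\in F^{\delta_n}(\athnmu)$. The heart of the matter is then to estimate $g_{\gamma_n}(\thnmu)$ against the normal cone. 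Writing out the averaging operator in the definition of $g_\gamma$, one has $-g_{\gamma_n}(\thnmu)=\frac1N\sum_{i=1}^N\int \zeta_{n,i}(\by)\,\sd\mu_{\thnmu}(\by)$ with $\zeta_{n,i}(\by):=\gamma_n^{-1}\big(\theta_{n-1,i}+\gamma_n y_i-P_G[\theta_{n-1,i}+\gamma_n y_i]\big)$. Two elementary facts about projection onto the convex set $G$ will be used repeatedly: first, since $\theta_{n-1,i}\in G$, nonexpansiveness yields $|\zeta_{n,i}(\by)|\le|y_i|\le|\by|$, so in particular $|g_{\gamma_n}(\thnmu)|\le\mu$, and, each $\cN_G(\cdot)$ being a convex cone containing $0$, the constraint $|z|\le 3\mu$ in \eqref{eq:F} is automatically met; second, $\zeta_{n,i}(\by)\in\cN_G(p_{n,i}(\by))$, the normal cone at the projected point $p_{n,i}(\by):=P_G[\theta_{n-1,i}+\gamma_n y_i]$.

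The argument is carried out on the almost sure event on which $\othnmu\to 0$ (Lemma~\ref{lem:agreement}) and $\gamma_n\to 0$ (Assumption~\ref{hyp:step}); there, for $|\by|\le R$ one has $|p_{n,i}(\by)-\athnmu|\le\gamma_n R+|\othnmu|\to 0$. Crucially, I shall \emph{not} compare $g_{\gamma_n}(\thnmu)$ to $\cN_G(\athnmu)$ directly, because a projection may activate a constraint inactive at $\athnmu$; instead I exploit the freedom of choosing the point $\theta'$ in the definition \eqref{eq:Fdelta} of $F^\delta$. Arguing by contradiction, suppose that for some $\epsilon>0$ the membership in $F^\epsilon(\athnmu)$ fails along a subsequence; by compactness of $G$ (Assumption~\ref{hyp:KT}(a)) extract a further subsequence along which $\athnmu\to\theta_*\in G$, and take $\theta'=\theta_*$. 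Since $|\athnmu-\theta_*|\to0$ and $\nabla f$ is continuous, it then suffices to show
\[
\sd\big(-g_{\gamma_n}(\thnmu),\,\cN_G(\theta_*)\big)\longrightarrow 0
\]
along this subsequence, which contradicts the assumed failure and forces $\delta_n\to 0$.

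This last estimate is where the constraint structure enters. I would split the integral defining $g_{\gamma_n}(\thnmu)$ at $\{|\by|\le R\}$: using $|\zeta_{n,i}(\by)|\le|\by|$ together with Assumption~\ref{hyp:model}(f), the tail $\{|\by|>R\}$ contributes at most $R^{-1}\sup_{\bth\in G^N}\int|\by|^2\sd\mu_\bth$, made arbitrarily small by choosing $R$ large, uniformly in $n$. On the bounded part, $p_{n,i}(\by)\to\theta_*$ uniformly in $|\by|\le R$, so any constraint inactive at $\theta_*$ stays inactive nearby, whence $A(p_{n,i}(\by))\subseteq A(\theta_*)$ for $n$ large; the linear independence of $\{\nabla q_j(\theta_*):j\in A(\theta_*)\}$ (Assumption~\ref{hyp:KT}(c)) bounds the associated multipliers, and continuity of the $\nabla q_j$ then yields $\sd(\zeta_{n,i}(\by),\cN_G(\theta_*))\to 0$ uniformly in $|\by|\le R$. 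Projecting each $\zeta_{n,i}(\by)$ onto the closed convex cone $\cN_G(\theta_*)$ provides a measurable integrable selection whose average over $i$ and integral over $\{|\by|\le R\}$ remains in $\cN_G(\theta_*)$, the remainder being uniformly small; combined with the tail bound this gives the displayed limit.

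The main obstacle is precisely this step: transferring normal-cone membership from the moving projected points $p_{n,i}(\by)$ to the fixed limit $\theta_*$ while commuting the approximation with both the average over agents and the integral against $\mu_{\thnmu}$. It is resolved by the quantitative outer semicontinuity of $\cN_G$ afforded by the qualification condition, by the dominating bound $|\zeta_{n,i}|\le|\by|$ controlling the tail, and by the convexity and closedness of $\cN_G(\theta_*)$ for the bounded part. Finally one sets $\delta_n$ to be the infimum of those $\delta$ for which the membership holds, augmented by $1/n$ to ensure positivity and the strict inequalities in \eqref{eq:Fdelta}; the preceding argument then shows $\delta_n\to 0$ almost surely.
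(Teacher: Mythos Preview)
Your argument is sound and identifies the same central obstacle as the paper: $-g_{\gamma_n}(\thnmu)$ need not be close to $\cN_G(\athnmu)$, so one must exploit the freedom of the comparison point $\theta'$ in the definition of $F^\delta$. The route you take to produce $\theta'$, however, is genuinely different. You argue by contradiction and compactness: along a hypothetical bad subsequence you pass to a limit $\theta_*$ of $\athnmu$, and then upper semicontinuity of the active set gives $A(p_{n,i}(\by))\subset A(\theta_*)$ automatically, so the normal cone at $\theta_*$ absorbs all the projected contributions. The paper, by contrast, constructs $\theta'_n$ explicitly and uniformly: it introduces the $\epsilon$-approximate active set $\cA(\ath,\epsilon)$, proves a quantitative approximation (Lemma~\ref{lem:Aepsilon}) showing $-g_\gamma(\bth)$ is close to a nonnegative combination of $\nabla q_j(\ath)$ over $j\in\cA(\ath,\epsilon(\gamma\lor|\oth|))$, and then proves via a Hausdorff-distance argument (Lemma~\ref{lem:AA}) that for each $\theta$ there is a nearby $\theta'$ with $\cA(\theta,\epsilon)\subset A(\theta')$. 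This yields an explicit $\delta_n$ as a function of $\gamma_n$ and $|\othnmu|$, whereas your $\delta_n$ is defined indirectly as an infimum. Your approach is more economical (it bypasses Lemmas~\ref{lem:Aepsilon} and~\ref{lem:AA} entirely, and your localized use of the qualification condition near $\theta_*$ replaces the uniform bound of Lemma~\ref{lem:lambda}); the paper's approach is more constructive and gives a quantitative rate for $\delta_n$ in terms of the primitive quantities, which could be useful if one later wanted convergence rates. As a minor point, your bound $|g_\gamma|\le\mu$ is sharper than the paper's $2\mu$; both suffice for the $3\mu$ constraint in $F$.
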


\begin{proof}
%   Recall that:
% $$
% g_\gamma(\bth) = \frac 1{N\gamma}\sum_{i=1}^N\bE_\bth
% \left(P_{G}[\theta_i+\gamma Y_i]-\theta_i-\gamma Y_i\right)
% $$
  From the definition of $g_\gamma$, it is straightforward to show that
  $|g_\gamma(\bth)|\leq 2\mu$.  Note that for any $\theta\in G$ and any $y\in
  \bR^d$, the vector $\theta+\gamma y-P_{G}[\theta+\gamma y]$ belongs to the
  normal cone $\cN_G(P_{G}[\theta+\gamma y])$ at point $P_{G}[\theta+\gamma y]$.
  Otherwise stated, $P_{G}[\theta+\gamma y]-\theta-\gamma y$ can be written as a
  linear combination of the gradient vectors associated with the active
  constraints, where the coefficients of the linear combination are nonnegative
  (see for instance, \cite{rockafellar:wets:1998}, theorem 6.14). The latter
  linear combination is moreover unique due to the qualification constraint
  given by Assumption~\ref{hyp:KT}(c). More precisely, if $A(P_{G}[\theta+\gamma
  y])$ represents the active set at point $P_{G}[\theta+\gamma y]$ for any
  $\theta$, $\gamma$, $y$, there exists a unique collection of nonnegative
  coefficients $(\lambda_j(\theta,\gamma,y) : j\in A(P_{G}[\theta+\gamma y]))$
  such that:
\begin{multline}
  -\frac{P_{G}[\theta+\gamma y]-\theta-\gamma y}{\gamma} =\\ \sum_{j\in A(P_{G}[\theta+\gamma y])} \!\!\!\!\lambda_j(\theta,\gamma,y) \nabla q_j(P_{G}[\theta+\gamma y])\ .
  \label{eq:decomp_cone}
\end{multline}
Throughout the paper, we use the convention that $\lambda_j(\theta,\gamma,y)=0$
in case $j\notin A(P_{G}[\theta+\gamma y])$.  The following technical lemma is
proved in Appendix~\ref{app:lambda}.
\begin{lemma}
  \label{lem:lambda}
  Under Assumptions~\ref{hyp:model}(f) and~\ref{hyp:KT},
  \begin{equation}
    \sup_{\substack{\bth\in G^N,\,\gamma>0\\ i=1\cdots N,j=1\cdots p}} \int \lambda_j(\theta_i,\gamma,y_i)^2\sd\mu_\bth(y_1,\dots,y_N) <\infty\ .
    \label{eq:suplambda}
  \end{equation}
\end{lemma}
We rewrite $g_\gamma(\bth)$ using expansion~(\ref{eq:decomp_cone}) as:
\begin{multline*}
 \!\!\!\! - g_\gamma(\bth) = \frac 1{N}\sum_{i=1}^N\ \int\!\!\!\! \sum_{j\in
  A(P_{G}[\theta_i+\gamma y_i])} \!\!\!\!\!\!\!\!\!\!\lambda_j(\theta_i,\gamma,y_i)
\nabla q_j(P_{G}[\theta_i+\gamma y_i])\\ \times \sd\mu_\bth(y_1,\dots,y_N)\ .
\end{multline*}
The following function $\phi:[0,+\infty)\to[0,+\infty)$ will be useful. Define:
\begin{equation}
  \phi(x):=\sup_{\substack{(\theta,\theta')\in G^2 : |\theta-\theta'|\leq x \\ j=1\cdots p}} \left|\nabla q_j(\theta)-\nabla q_j(\theta')\right|\ .
  \label{eq:phi}
\end{equation}
Since each gradient $\nabla q_j$ is continuous, it is uniformly continuous on
the compact set $G$. Thus $\phi(x)$ tends to zero as $x\downarrow 0$.  Loosely
speaking, when $\gamma$ is small and when all $\theta_i$'s are close to the
average $\ath$, the point $P_{G}[\theta_i+\gamma y_i]$ is close to $\ath$. In
this case, the uniform continuity of $\nabla q_j$ implies that $\nabla
q_j(P_{G}[\theta_i+\gamma y_i])\simeq \nabla
q_j(\ath)$. Lemma~\ref{lem:Aepsilon} below states a somewhat stronger result.
For any $\epsilon\geq 0$ and $\theta\in G$, define $\cA(\theta,\epsilon)$ as the
set of constraints which are active at least for some point in an
$\epsilon$-neighborhood of $\theta$:
\begin{equation}
  \cA(\theta,\epsilon) := \{ j=1,\cdots,p\,:\, \sd(\theta,q_j^{-1}(\{0\}))\leq \epsilon\}\ .
  \label{eq:Aepsilon}
\end{equation}
\begin{lemma}
\label{lem:Aepsilon}
Under Assumption \ref{hyp:KT}, there exists a constant $C>0$ and a function $\gamma\mapsto \epsilon(\gamma)$
on $[0,+\infty)$ satisfying $\lim_{\gamma\downarrow 0}\epsilon(\gamma)=0$ such that the following
holds.  For any $\bth\in G^N$ and any $\gamma>0$, there exists $(\alpha_1,\cdots,\alpha_p)\in
[0,C]^p$ such that
\begin{equation}
\label{eq:approxg}
\left|-g_\gamma(\bth) - \!\!\!\!\!\!\sum_{j\in \cA(\ath,\epsilon(\gamma\lor|\oth|))} \!\!\!\!\!\!\!\!\!\!\!\alpha_{j}\,\nabla q_j(\ath)\right|\leq \epsilon(\gamma\lor|\oth|)\ .
\end{equation}
\end{lemma}
The proof is provided in Appendix~\ref{app:Aepsilon}. The sum in the lefthand
side of~(\ref{eq:approxg}) is a (nonnegative) linear combination of the gradient
vectors of the constraints at point $\ath$.  However, this does not necessarily
imply that this term belongs to the normal cone $\cN_G(\ath)$ because, for a
fixed $\epsilon>0$, the set $\cA(\ath,\epsilon)$ is in general larger than the
active set $A(\ath)$.  Nevertheless, the following lemma states that
$\cA(\ath,\epsilon)$ is no larger than a certain active set $A(\theta')$ for
some $\theta'$ in a neighborhood~of~$\ath$.
\begin{lemma}
\label{lem:AA}
Under Assumption \ref{hyp:KT}, there exists a function $\epsilon\mapsto
\delta(\epsilon)$ on $[0,+\infty)$ satisfying $\lim_{\epsilon\downarrow
  0}\delta(\epsilon)=0$ and there exists $\epsilon_0>0$ such that for any
$0<\epsilon<\epsilon_0$ and any $\theta\in G$, there exists $\theta'\in G$ s.t.:
\begin{equation}
\label{eq:AA}
|\theta-\theta'|<\delta(\epsilon)\quad\textrm{ and }\quad \cA(\theta,\epsilon)\subset A(\theta')\ .
\end{equation}
\end{lemma}
The proof is given in Appendix~\ref{app:AA}.  We put all pieces together.
Consider constant $C$ and functions $\epsilon(\,.\,)$ and $\delta(\,.\,)$ as in
Lemma~\ref{lem:Aepsilon} and~\ref{lem:AA} respectively. Define $\epsilon_n :=
\epsilon(\gamma_n\lor|\othnmu|)$ and $\delta_n := \max(
\epsilon_n+Cp\,\phi(\epsilon_n)\,,\, \delta(\epsilon_n))$.  Clearly,
$\epsilon_n$ (and consequently $\delta_n$) converges to zero a.s. due to
Lemma~\ref{lem:agreement} and to the fact that $\gamma_n\to 0$. In particular,
there exists an integer $n_0$ s.t.  $\epsilon_n<\epsilon_0$ for any $n\geq n_0$.
By Lemma~\ref{lem:AA}, for any $n\geq n_0$, there exists $\theta_n'\in G$
satisfying $|\theta_n'-\athnmu|<\delta(\epsilon_n)$ and
$\cA(\athnmu,\epsilon_n)\subset A(\theta_n')$. Thus, by
Lemma~\ref{lem:Aepsilon}, there exists $(\alpha_1,\cdots,\alpha_p)\in [0,C]^p$,
such that
\begin{equation}
  |-g_{\gamma_n}(\thnmu)-\sum_{j\in A(\theta_n')} \!\!\alpha_{j}\,\nabla q_j(\athnmu)| \leq \epsilon_n\ .
  \label{eq:gn}
\end{equation}
Define $z_n := \sum_{j\in A(\theta_n')} \alpha_{j}\,\nabla q_j(\theta_n')$.
Clearly, $z_n \in \cN_G(\theta_n')$. Using inequality~(\ref{eq:gn}),
\begin{eqnarray*}
  \left|-g_{\gamma_n}(\thnmu)-z_n\right| \!\!\!\!
  &\leq&\!\!\!\! \epsilon_n+C\!\!\!\! \sum_{j\in A(\theta_n')}\!\!\!\!\left|\nabla q_j(\theta_n')\!-\!\nabla q_j(\athnmu)\right| \\
  &\leq&\!\!\!\! \epsilon_n+Cp\, \phi(|\theta_n'-\athnmu|) \quad \leq \delta_n\ .
\end{eqnarray*}
As $|g_\gamma(\theta)|\leq 2\mu$, this moreover implies that
$|z_n|\leq 3\mu$ provided that $\delta_n$ is small enough. Thus,
$$
\sd(-g_{\gamma_n}(\thnmu)\,,\,\cN_G(\theta_n')\cap \{ z:|z|\leq 3\mu\})\leq
\delta_n
$$
for all but a finite number of $n$'s.  The proof of Proposition~\ref{prop:gF} is
completed by using~(\ref{eq:rmc}).
\end{proof}

Define $\tau_0=0$ and $\tau_n:=\sum_{i=k}^n\gamma_k$ for any $n\geq 1$. Define
the continuous time process $\Theta:[0,+\infty)\to \bR^d$ as:
$$
\Theta(\tau\nmu+t) := \athnmu + t\,\frac{\athn-\athnmu}{\tau_n-\tau\nmu}
$$
for any $t\in [0,\gamma_n)$ and any $n\geq 1$.
\begin{prop}
  \label{prop:di}
  Under Assumptions~\ref{hyp:model},~\ref{hyp:step},~\ref{hyp:KT}, the
  interpolated process $\Theta$ is a perturbed solution to~(\ref{eq:di}) w.p.1.
\end{prop}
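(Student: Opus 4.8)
The plan is to verify directly that the interpolated process $\Theta$ satisfies the definition of a perturbed solution to the differential inclusion~(\ref{eq:di}) recalled in Section~\ref{subsec:di}. Recall that this requires two things: (i) exhibiting a locally integrable \emph{driving perturbation} $U(t)$ whose windowed integrals $\int_t^{t+v}U(s)\,ds$ vanish uniformly as $t\to\infty$, and (ii) showing that $\frac{d\Theta}{dt}-U \in F^{\delta(t)}(\Theta)$ for a.e. $t$, with $\delta(t)\to 0$. The whole point is that Proposition~\ref{prop:gF} has already done the genuinely hard geometric work — it places the \emph{mean-field increment} into an inflated inclusion $F^{\delta_n}(\athnmu)$ — so here the remaining task is essentially bookkeeping: translate the discrete-time statement into the continuous-time interpolation and absorb the noise term $\xi_n$ into $U$.

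First I would set up the time change. With $\tau_n=\sum_{k=1}^n\gamma_k$, on each interval $[\tau_{n-1},\tau_n)$ the process $\Theta$ is affine with slope $(\athn-\athnmu)/\gamma_n$. By Proposition~\ref{prop:average} this slope equals $-\nabla f(\athnmu)+g_{\gamma_n}(\thnmu)+\xi_n+r_n$. The natural choice is to let the perturbation $U$ be the piecewise-constant function equal to $\xi_n$ on $[\tau_{n-1},\tau_n)$, and to fold the vanishing term $r_n$ into the inflation radius. I would then check condition~(i): for a fixed window width $T>0$, the integral $\int_{\tau_{n-1}}^{\tau_{n-1}+v}U(s)\,ds$ is, up to boundary corrections of size $O(\gamma_n\sup_\ell|\xi_\ell|)$, a partial sum $\sum_\ell \gamma_\ell\xi_\ell$ of the type controlled by~(\ref{eq:supxi}) in Proposition~\ref{prop:average}. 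Since~(\ref{eq:supxi}) gives $\lim_{n}\sup_{k\geq n}|\sum_{\ell=n}^k\gamma_\ell\xi_\ell|=0$ w.p.1, the uniform-in-$v$ vanishing of the windowed integrals follows directly; the boundary corrections vanish because $\gamma_n\to 0$ and $\xi_n$ is (a.s.) bounded on bounded-$\theta$ events.

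For condition~(ii), on $[\tau_{n-1},\tau_n)$ one has $\frac{d\Theta}{dt}-U(t)=-\nabla f(\athnmu)+g_{\gamma_n}(\thnmu)+r_n$, and by definition~(\ref{eq:F}) of $F$ together with Proposition~\ref{prop:gF} this lies in $F^{\delta_n}(\athnmu)$ once $n\geq n_0$; adding $r_n$ only enlarges the radius to $\delta_n+|r_n|$, still vanishing a.s. The one subtlety is that $F^{\delta(t)}(\cdot)$ in the definition is evaluated at $\Theta(t)$, whereas Proposition~\ref{prop:gF} gives membership in $F^{\delta_n}(\athnmu)$. Since $|\Theta(t)-\athnmu|\leq|\athn-\athnmu|\leq \gamma_n(|\nabla f(\athnmu)|+2\mu+|\xi_n|+|r_n|)$ on this interval, which tends to $0$, I would enlarge the inflation by this increment — using that $F^{a}$ evaluated at a point $\delta'$-close to $\athnmu$ is contained in $F^{a+\delta'}(\Theta(t))$, which is immediate from~(\ref{eq:Fdelta}). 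Thus with $\delta(t):=\delta_n+|r_n|+|\Theta(t)-\athnmu|$ on each interval, both requirements hold and $\delta(t)\to 0$ a.s.

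The main obstacle, and the place where I would be most careful, is the interplay between \emph{boundedness} and the a.s. statements: the constants bounding $|g_\gamma|\leq 2\mu$, $|z_n|\leq 3\mu$, and the $F^\delta$ radius all rely on $\athnmu$ staying in (a neighborhood of) the compact set $G$, and on $\xi_n,r_n$ being controlled along the trajectory. Since $\athn$ is an average of projections onto $G$, it remains in the convex hull of $G\subset G$, so $\Theta$ is bounded — this is what makes Theorem~\ref{the:benaim} applicable downstream and what justifies suppressing the linear-growth caveat of the footnote after~(\ref{eq:F}). I would state this boundedness explicitly at the outset, as it is used implicitly throughout the verification of both conditions.
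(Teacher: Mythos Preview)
Your proposal is correct and follows essentially the same route as the paper's own proof: the same piecewise-constant choice $U(t)=\xi_n$ on $[\tau_{n-1},\tau_n)$, the same inflation radius $\delta(t)=\delta_n+|r_n|+|\Theta(t)-\athnmu|$, and the same enlargement step $r+F^\delta(\theta)\subset F^{\delta+|r|+|\theta-\theta'|}(\theta')$ to pass from the anchor point $\athnmu$ to $\Theta(t)$. One small imprecision: you justify the vanishing of the boundary correction by saying ``$\xi_n$ is (a.s.) bounded''; that need not hold, but what you actually need --- and what the paper uses --- is simply $\gamma_n|\xi_n|\to 0$, which follows directly from~(\ref{eq:supxi}) by taking $k=n$.
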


\begin{proof}
  The proof follows more or less the same idea as the proof of Proposition~1.3
  in reference~\cite{benaim:2005}.  There exists an event $\Omega_0$ of probability one
  such that $\delta_n\to 0$ and $r_n\to 0$ for any sample point
  $\omega\in\Omega_0$. From now on, we fix such an $\omega$ and we study
  function $\Theta$ for this fixed sample point.  For any $n\geq 1$ and
  $\tau_{n-1}<t<\tau_{n}$, $\frac{d\Theta(t)}{dt}=(\athn-\athnmu)/\gamma_n$.  By
  Proposition~\ref{prop:gF},
$$
\frac{d\Theta(t)}{dt}\ \in\ \xi_n + r_n + F^{\delta_n}(\athnmu)\qquad (
\tau_{n-1}<t<\tau_{n})\ .
$$
The following property is easy to check. For any set-valued function $F$, any
$r\in\bR^d$, $\delta>0$,
$$
\forall (\theta,\theta')\in\bR^d\times \bR^d,\ r+F^\delta(\theta) \subset
F^{\delta+|r|+|\theta-\theta'|}(\theta')\ .
$$
Now, for any $n$ and any $\tau_{n-1}<t<\tau_{n}$, define
$\eta(t):=\delta_n+|r_n|+|\Theta(t)-\athnmu|$ and $U(t)=\xi_n$.  We obtain:
$$
\frac{d\Theta(t)}{dt}-U(t)\ \in\ F^{\eta(t)}(\Theta(t))\ .
$$
It is straightforward to show from~(\ref{eq:supxi}) that for any $T>0$,
$\sup_{0\leq v\leq T}\left| \int_t^{t+v}U(s)ds\right|$ tends to zero as
$t\to\infty$ (we refer the reader to Proposition~1.3 in reference~\cite{benaim:2005} for
details). We now prove that $\eta(t)$ tends to zero as $t\to\infty$.  To this
end, note that for any $\tau_{n-1}<t<\tau_{n}$,
$$
|\eta(t)| \leq  \delta_n+|r_n|+|\athn-\athnmu| \ ,
$$
thus,
\begin{multline*}
  |\eta(t)| \leq \delta_n+(1+\gamma_n)|r_n|+\gamma_n \sup_{\theta\in G}\nabla f(\theta) 
+ \gamma_n |g_{\gamma_n}(\thnmu)| \\+ \gamma_n |\xi_n|\ .
\end{multline*}
The first three terms of the righthand side of the above inequality converge to
zero as $\gamma_n\to0$, $r_n\to 0$ and $\delta_n\to 0$.  The fourth term tends to
zero as well because $|g_{\gamma}(\bs\theta)|$ is uniformly bounded in
$(\gamma,\theta)$, as remarked in the proof of
Proposition~\ref{prop:gF}. Finally, $\gamma_n\xi_n$ tends to zero
by~(\ref{eq:supxi}).  Thus $\eta(t)$ tends to zero as $t\to\infty$. This
completes the proof of Proposition~\ref{prop:di}.
\end{proof}

\subsection{Study of the Differential Inclusion Dynamics}
\label{subsec:didynamics}

\begin{prop}{\cite{aubin:cellina:1984}}
  \label{prop:aubin}
  Any solution $x$ of the differential inequality $\frac{d x}{dt}\in F(x)$ with
  $F$ defined by eq.~(\ref{eq:F}) such that $\forall t\in\bR$, $x(t)\in G$
  satisfies for almost all $t\in\bR$:
  \begin{equation}
    \label{eq:projode}
  \frac{d x}{dt}(t) = P_{\cT_G(x(t))}(-\nabla f(x(t)))
  \end{equation}
  where $P_{\cT_G(x)}$ stands for the projection onto the tangent cone
  $\cT_G(x)$ at point $x$.
\end{prop}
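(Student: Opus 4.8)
The plan is to show that along any solution $x$ confined to $G$, the velocity $\dot x(t)$ is exactly the projection of $-\nabla f(x(t))$ onto the tangent cone, by exhibiting the Moreau decomposition of $-\nabla f(x(t))$ relative to the mutually polar cones $\cT_G(x(t))$ and $\cN_G(x(t))$. Fix a solution $x$ with $x(t)\in G$ for all $t$. By the definition of $F$ in~(\ref{eq:F}), for almost every $t$ there is $z(t)\in\cN_G(x(t))$ with $\dot x(t)=-\nabla f(x(t))-z(t)$, that is $-\nabla f(x(t))=\dot x(t)+z(t)$. Since $G$ is closed and convex, $\cN_G(x(t))$ is a closed convex cone whose polar is precisely $\cT_G(x(t))$. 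The goal reduces to verifying that (i) $\dot x(t)\in\cT_G(x(t))$, (ii) $z(t)\in\cN_G(x(t))$, and (iii) $\pscal{\dot x(t)}{z(t)}=0$ for almost every $t$; Moreau's decomposition theorem then forces $\dot x(t)=P_{\cT_G(x(t))}(-\nabla f(x(t)))$, which is~(\ref{eq:projode}).

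Point (ii) holds by construction. Point (i) is the standard fact that the velocity of an absolutely continuous curve trapped in a convex set lies in the tangent cone: for small $h>0$ one has $h^{-1}(x(t+h)-x(t))\in h^{-1}(G-x(t))\subset\cT_G(x(t))$, and letting $h\downarrow 0$ keeps the limit in the closed cone $\cT_G(x(t))$ at every differentiability point of $x$.

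The hard part is (iii). First I would invoke the qualification Assumption~\ref{hyp:KT}(c), under which the normal cone at $\theta\in G$ is $\cN_G(\theta)=\{\sum_{j\in A(\theta)}\lambda_j\nabla q_j(\theta):\lambda_j\geq 0\}$ (see~\cite{rockafellar:wets:1998}), so that $z(t)=\sum_{j\in A(x(t))}\lambda_j(t)\,\nabla q_j(x(t))$ with $\lambda_j(t)\geq 0$. For a fixed $j$, set $E_j:=\{t:j\in A(x(t))\}=\{t:q_j(x(t))=0\}$. At almost every $t\in E_j$ the curve $x$ is differentiable, $x(t)\in\dG$ (so $q_j$ is differentiable there by Assumption~\ref{hyp:KT}(b)), and $t$ is a Lebesgue density point of $E_j$; approaching $t$ through points $s\in E_j$, on which $q_j(x(s))=0$, the chain rule yields $\pscal{\nabla q_j(x(t))}{\dot x(t)}=\frac{d}{dt}q_j(x(t))=0$. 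This is the classical fact that the derivative of a real function vanishes almost everywhere on any of its level sets. Since there are only $p$ constraints, the exceptional null sets combine, and for almost every $t$ each active index contributes $\lambda_j(t)\pscal{\nabla q_j(x(t))}{\dot x(t)}=0$; summing gives $\pscal{z(t)}{\dot x(t)}=0$ a.e.

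With (i)--(iii) in hand the proof closes by Moreau's theorem: since $\cT_G(x(t))$ and $\cN_G(x(t))$ are mutually polar closed convex cones and $-\nabla f(x(t))=\dot x(t)+z(t)$ is a decomposition into one vector of each cone with $\pscal{\dot x(t)}{z(t)}=0$, uniqueness forces $\dot x(t)=P_{\cT_G(x(t))}(-\nabla f(x(t)))$ for almost every $t$. I expect step (iii) to be the main obstacle: the polarity of the two cones yields only the inequality $\pscal{\dot x(t)}{z(t)}\leq 0$ for free, and it is precisely the level-set derivative argument that upgrades this to the equality required by the uniqueness of Moreau's decomposition; everything else is routine convex-analytic bookkeeping.
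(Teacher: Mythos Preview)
Your proposal is correct, and its overall architecture---write $-\nabla f(x(t))=\dot x(t)+z(t)$ with $z(t)\in\cN_G(x(t))$, check (i) $\dot x(t)\in\cT_G(x(t))$, (ii) $z(t)\in\cN_G(x(t))$, (iii) $\pscal{\dot x(t)}{z(t)}=0$, and conclude by Moreau's decomposition---matches the paper exactly.

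The only substantive difference is in how you obtain (iii). You go through the constraint representation of $\cN_G$, invoking Assumption~\ref{hyp:KT}(c) to write $z(t)=\sum_{j\in A(x(t))}\lambda_j(t)\nabla q_j(x(t))$, and then use the level-set derivative argument (``$\frac d{dt}q_j(x(t))=0$ a.e.\ on $\{t:q_j(x(t))=0\}$''). This is valid but heavier than necessary. The paper bypasses the constraint structure altogether with the following one-line observation: at any differentiability point $t$, not only $\dot x(t)=\lim_{h\downarrow 0}h^{-1}(x(t+h)-x(t))\in\cT_G(x(t))$, but also $-\dot x(t)=\lim_{h\downarrow 0}h^{-1}(x(t-h)-x(t))\in\cT_G(x(t))$, using the \emph{backward} difference quotient. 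Since $\cT_G(x(t))$ is the polar of $\cN_G(x(t))$, every $v\in\cN_G(x(t))$ satisfies both $\pscal{\dot x(t)}{v}\leq 0$ and $\pscal{-\dot x(t)}{v}\leq 0$, hence $\pscal{\dot x(t)}{v}=0$; in particular $\pscal{\dot x(t)}{z(t)}=0$. This argument uses only the convexity of $G$ and needs neither the description~(\ref{eq:G}), nor the qualification condition, nor the level-set derivative fact---so what you flagged as ``the main obstacle'' is in fact the cheapest step.
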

\begin{proof}
  For the sake of completeness, we reproduce here the proof of reference
  \cite[pp. 266]{aubin:cellina:1984}. Let $x(t)$ be a solution of
  eq.~(\ref{eq:di}); it is differentiable for almost every $t$ by definition. At
  such $t$ one has $\frac{dx}{dt}(t) = \lim_{\epsilon\downarrow
    0}\frac{x(t+\epsilon)-x(t)}{\epsilon}\in \cT_G(x(t))$ since $x(t)\in G$ for
  all $t$. For the same reason (using $\epsilon<0$), one has
  $-\frac{dx}{dt}(t)\in\cT_G(x(t))$. By convexity of $G$, $\cT_G(\theta)$ is the
  dual cone of $\cN_G(\theta)$ for every $\theta$. Hence $\forall
  v\in\cN_G(x(t))$ one has at the same time $\langle \frac{dx}{dt}(t),v\rangle
  \leq 0$ and $\langle -\frac{dx}{dt}(t),v\rangle \leq 0$. As a consequence,
  $\forall v\in\cN_G(x(t))$,$\langle \frac{dx}{dt}(t),v\rangle =0$. Now,
  considering that $x$ is a solution of eq.~(\ref{eq:di}) with $F$ defined by
  eq.~(\ref{eq:F}): $\frac{dx}{dt}(t) = -v - \nabla f(x(t))$ for some
  $v\in\cN_G(x(t))$. To conclude, $-\nabla f(x(t))$ can be written
  $v+\frac{dx}{dt}(t)$ with $\langle \frac{dx}{dt}(t),v\rangle = 0$ and it is a
  classical fact from convex analysis (see, for instance,
  reference~\cite{rockafellar:wets:1998}) that, $\cN_G(x(t))$ and $\cT_G(x(t))$
  are dual cones: $v = P_{T}\left(-\nabla f(x(t))\right)$.
\end{proof}

The following proposition is straightforward to prove but has an important role.
\begin{prop}
  \label{prop:lyap}
  Any solution $x$ of eq.~(\ref{eq:projode}) admits $f$ as a Lyapunov function for the set $\cL$ of KKT points defined by eq.~(\ref{eq:KKT}).
\end{prop}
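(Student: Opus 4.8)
The plan is to show that $t\mapsto f(x(t))$ is nonincreasing along any solution of~(\ref{eq:projode}), with a strict decrease off $\cL$, by computing its derivative. First I would note that, since $f$ is continuously differentiable (Assumption~\ref{hyp:model}(d)) and $x$ is absolutely continuous with values in the compact set $G$, the composite map $t\mapsto f(x(t))$ is absolutely continuous, so the chain rule applies for almost every $t$:
\[
\frac{d}{dt}f(x(t)) = \langle \nabla f(x(t)),\dot{x}(t)\rangle\ .
\]
By Proposition~\ref{prop:aubin} I may substitute $\dot{x}(t) = P_{\cT_G(x(t))}(-\nabla f(x(t)))$, so everything reduces to an identity about projecting $-\nabla f(x(t))$ onto the tangent cone.

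The key algebraic step is the orthogonal decomposition of $-\nabla f(x(t))$ relative to the mutually polar cones $\cT_G(x(t))$ and $\cN_G(x(t))$. For a closed convex set $G$ these two cones are polar to one another, so Moreau's theorem (see~\cite{rockafellar:wets:1998}) gives, writing $v_t := -\nabla f(x(t))$ and $p_t := P_{\cT_G(x(t))}(v_t) = \dot{x}(t)$, the decomposition $v_t = p_t + P_{\cN_G(x(t))}(v_t)$ with the two summands orthogonal. Orthogonality yields $\langle v_t, p_t\rangle = |p_t|^2$, whence
\[
\frac{d}{dt}f(x(t)) = -\langle v_t, p_t\rangle = -|\dot{x}(t)|^2 \leq 0
\]
for almost every $t$. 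Integrating the absolutely continuous function $t\mapsto f(x(t))$ then gives $f(x(t)) - f(x(0)) = -\int_0^t |\dot{x}(s)|^2\, ds \leq 0$, i.e. $f(x(t))\leq f(x(0))$ for every $t>0$, which is the first requirement in the definition of a Lyapunov function.

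It remains to obtain the \emph{strict} inequality when $x(0)\notin\cL$, and this is the only delicate point. One cannot argue pointwise, because $t\mapsto \dot{x}(t)$ need not be continuous: the tangent cone $\cT_G(x(t))$ jumps as $x(t)$ crosses $\dG$. Instead I would argue from the integral identity. Suppose, for contradiction, that $f(x(t_0)) = f(x(0))$ for some $t_0>0$; then $\int_0^{t_0}|\dot{x}(s)|^2\, ds = 0$, so $\dot{x}(s)=0$ for almost every $s\in[0,t_0]$, and by absolute continuity $x$ is constant equal to $x(0)$ on $[0,t_0]$. Substituting back into $\dot{x}(s) = P_{\cT_G(x(s))}(-\nabla f(x(s)))$ forces $P_{\cT_G(x(0))}(-\nabla f(x(0))) = 0$, which by polarity means $-\nabla f(x(0))\in\cN_G(x(0))$, i.e. $x(0)\in\cL$, a contradiction. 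Hence $f(x(t))<f(x(0))$ for all $t>0$ whenever $x(0)\notin\cL$, and $f$ is a Lyapunov function for $\cL$. The main obstacle is exactly this last step: it is where the (false) pointwise statement is replaced by the integral/absolute-continuity argument, and where the characterization $P_{\cT_G(\theta)}(-\nabla f(\theta))=0 \Leftrightarrow \theta\in\cL$ is invoked.
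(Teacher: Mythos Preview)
Your argument is correct and follows the same route as the paper: chain rule plus the Moreau decomposition on the polar pair $\cT_G(x(t)),\cN_G(x(t))$ to obtain $\frac{d}{dt}f(x(t))=-|P_{\cT_G(x(t))}(-\nabla f(x(t)))|^2$. Your integral/absolute-continuity argument for the strict inequality when $x(0)\notin\cL$ is a welcome addition that the paper leaves implicit under ``This gives the sought result.''
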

\begin{proof}
  One has $\frac{d}{dt}f(x(t)) = \langle \nabla
  f(x(t)),\frac{dx}{dt}(t)\rangle$. From Proposition~\ref{prop:aubin}, we deduce:
  \[
  \frac{d}{dt}f(x(t)) = - \langle -\nabla f(x(t)), P_{\cT_G(x(t))}(-\nabla
  f(x(t)))\rangle
  \]
  When $T$ and $N$ are dual cones, the decomposition $v = P_{T}(v) + P_{N}(v)$, $\langle
  P_T(v),P_N(v)\rangle = 0$ holds. Using this decomposition with $v=-\nabla f(x(t))$,
  $T = \cT_G(x(t))$ and $N = \cN_G(x(t))$, one deduces
  \[
  \frac{d}{dt}f(x(t)) = - | P_{\cT_G(x(t))}\left(-\nabla f(x(t))\right)|^2\;.
  \]
  This gives the sought result.
\end{proof}

We are now in a position to apply Theorem~\ref{the:benaim} with Lyapunov
function $f$ itself and $\Lambda$ the set of KKT points of our optimization
program (see Proposition~\ref{prop:lyap}).

%When $F$ is defined by~(\ref{eq:F}), the differential
%inclusion~(\ref{eq:di}) is equivalent to a differential variational
%inequality~\cite[pp. 264]{aubin:cellina:1984}.
%By~\cite[Proposition~2,~pp. 266]{aubin:cellina:1984}, any solution
%  to~(\ref{eq:di}) viable in $G$ is a solution to the projected
%  ordinary differential equation:\alert{etendre et transformer en prop}
%$$
%\frac{dx(t)}{dt} \in P_{\cT_G(x(t))}(-\nabla f(x(t)))
%$$
%Based on this remark, it is straightforward to show that 
%the objective function $f$ is a Lyapunov function for the set
%of KKT points (we also refer to~\cite{kushner:1978,nagurney:1995,dupuis:1993}).  
%The proof of Theorem~\ref{the:cvc} then follows from Theorem~\ref{the:benaim}.

\section{Application: Power Allocation in Ad-hoc Wireless Networks}
\label{sec:appli}

%\subsection{Framework}

The context of power allocation for wireless networks has recently raised a
great deal of attention in the field of distributed optimization and game
theory~\cite{scutari:2010,belgama:2010,samson:2011}. Application of distributed
optimization to power allocation has been previously investigated
in~\cite{ram:veeravalli:nedic:INFOCOM:2009}. 
% The present paragraph follows the
% same central idea as~\cite{ram:veeravalli:nedic:INFOCOM:2009} though in a rather
% different context.

Consider an \emph{ad-hoc} network composed of $N$ source-destination pairs.  
We focus on the so-called interference channel \emph{i.e.},
the signal received by the destination of a given pair $i=1,\dots,N$ is corrupted both by 
an additive white Gaussian noise of variance $\sigma_i^2$ and by the interference produced by other sources $j\neq i$.
Denote by $p^{i}\geq 0$ the transmission power of source $i$. 
The power of the useful received signal at the destination $i$ is proportional to
$p^{i}A^{i,i}$ where $A^{i,i}$ represents the channel gain between the $i$th source and the corresponding destination.
On the other hand, the level of interference endured by the destination $i$ is proportional to
$\sum_{j\neq i} p^{j}A^{j,i}$ where $A^{j,i}$ is the (positive) channel gain
between source $j$ and destination $i$.
As will be explained below, the reliability of the link between the $i$th source and its destination 
is usually expressed as an increasing function of the 
\emph{signal to interference-plus-noise ratio}, defined as $A^{i,i}p^i/(\sigma_i^2 + \sum_{j\neq i}A^{j,i}p^j)$.

% The channel gain of the $i$th user is represented by a positive coefficient
% $A^{i,i}$ which can be interpreted as the square of the modulus of the
% corresponding complex valued channel gain.  As all agents share the same
% spectral band, user $i$ suffers from the multiuser interference produced by
% other users $j\neq i$. We denote by $A^{j,i}$ is the (positive) channel gain
% between source $j$ and destination $i$.  In the sequel, w

We assume that there is
no Channel State Information at the Transmitter (no CSIT) \emph{i.e.}, all
channel gains are unknown at all transmitters.  However, we assume that the
destination associated with the $i$th source-destination pair
\begin{itemize}
\item knows the set of channel gains $\bs A^i:=(A^{1,i},\cdots,A^{N,i})^T$,
\item ignores all other channel gains $\bs A^j$ for $j\neq i$.
\end{itemize}
Figure~\ref{fig:ic} below illustrates the interference channel with $N=2$
transmit-destination pairs.
\begin{figure}[h]
  \centering
  \includegraphics[width=\columnwidth]{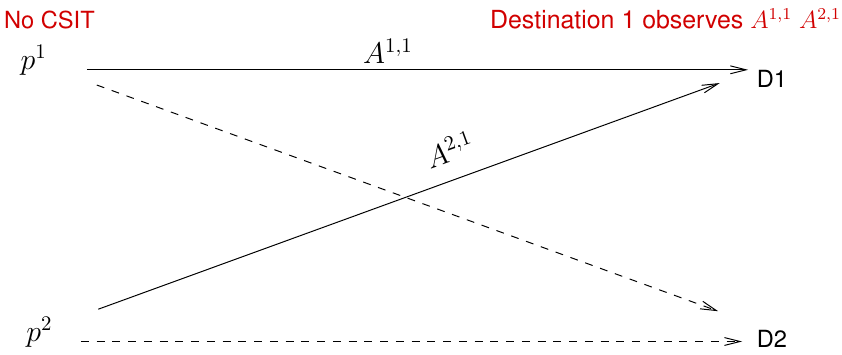}
  \caption{Example of a $2\times 2$ interference channel.}
  \label{fig:ic}
\end{figure}
%Denote by $p^{i}$ the power allocated by user $i$.  
We assume that $0\leq
p^{i}\leq \cP_i$ where $\cP_i$ is the maximum allowed power for user $i$.
Define $\theta = ({p^1},\cdots,{p^N})^T$ as the vector of all powers of all
users.  The aim is to select a relevant value for parameter $\theta$.  We assume
that destinations are able to communicate according to an underlying connected
graph.  The proposed algorithm works as follows.
\begin{enumerate}
\item In a first step, the set of destination nodes cooperate and jointly search
  for a relevant global power allocation $\theta$. The desired vector $\theta$
  corresponds to a local minimizer of an optimization problem which will be made
  clear below.
\item Once an agreement is found on the power allocation vector $\theta$, each
  destination $i$ provides its own source with the corresponding power $p^i$
  using a dedicated channel.
\end{enumerate}

%\subsection{Fixed Deterministic Channels}
%\label{sec:allocdeterministic}

Consider fixed deterministic channels. As a performance metric, consider
the error probability observed at each destination. Assuming for instance that
each transmitter uses a 4-QAM modulation, the error probability at the $i$th
destination is given by~\cite[Section 3.1]{tse:2005}:
\begin{equation}
  \label{eq:Pe}
  P_{e,i}(\theta,\bs A^i) := Q\left(\sqrt{\frac{A^{i,i}p^i}{\sigma_i^2 + \sum_{j\neq i}A^{j,i}p^j}}\right)\ .
\end{equation}
where $Q(x)=(\sqrt{2\pi})^{-1}\int_x^\infty e^{-t^2/2}dt$.
We investigate the following minimization problem:
\begin{equation}
  \min_{\theta\in G} \sum_{i=1}^N \beta_i\, P_{e,i}(\theta,\bs A^i)
  \label{eq:sumpe}
\end{equation}
where $\beta_i$ is an arbitrary positive deterministic weight known only by
agent $i$ and where $G:=\{(p^1,\cdots,p^N)\in\bR^N : \forall i=1,\cdots,N,\,
0\leq p^i\leq \cP^i\}$.  The above optimization problem is non-convex.  Note
that, utility functions~(\ref{eq:Pe}) can of course be replaced by any other
continuously differentiable functions of the signal-to-interference-plus-noise
ratio without changing the results of this section.
% Here, our aim is to achieve a local minimizer of~(\ref{eq:sumpe}).
Section~\ref{sec:model} suggests the following deterministic distributed
gradient algorithm.  Each user $i$ has an estimate $\theta_{n,i}$ of the
\emph{whole} vector $\theta$ at the $n$th iteration.  Here, we stress the fact
that a given user has not only an estimate of its own power
allocation $p^i$, but has also an estimate of what should be the power
allocation of other users $j\neq i$. Denote by
$\thn=(\theta_{n,1}^T,\cdots,\theta_{n,N}^T)^T$ the vector of size $N^2$ which
gathers all local estimates.  Denote by $\bs A:=((\bs A^1)^T,\cdots,(\bs
A^N)^T)^T$ the vector which gathers all $N^2$ channel gains.  The distributed
algorithm writes:
\begin{equation}
  \label{eq:deterministicgradient}
  \thn = (W_n\otimes I_d)P_{G^{N}}\left[\thnmu + \gamma_n\,\Upsilon(\thnmu;\bs A) \right]
\end{equation}
where for any $\bth=(\theta_1^T,\cdots,\theta_N^T)^T$ in $\bR^{N^2}$, we set
$$
\Upsilon(\bth;\bs A)\!=\!(\beta_1 \nabla_\theta P_{e,1}(\theta_1;\bs
A^1)^T,\cdots,\beta_N \nabla_\theta P_{e,N}(\theta_N;\bs A^N)^T)^T
$$
and where $\nabla_\theta$ is the gradient operator with respect to the first
argument $\theta$ of $P_{e,i}(\theta,A^i)$.
\begin{coro}
  Under the stated assumptions on the sequences $(\gamma_n)_{n\geq 1}$ and
  $(W_n)_{n\geq 1}$, the algorithm~(\ref{eq:deterministicgradient}) is such that
  sequence $(\athn)_{n\geq 1}$ converges to the set of KKT points
  of~(\ref{eq:sumpe}).
\end{coro}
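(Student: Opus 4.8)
The plan is to derive the corollary as a direct instance of Theorem~\ref{the:cvc}, by checking that the power-allocation setup fits the abstract framework of Section~\ref{sec:model} with dimension $d=N$, utility functions $f_i:=\beta_i P_{e,i}(\,\cdot\,;\bs A^i)$, and average $f=\tfrac1N\sum_{i=1}^N f_i$. Iteration~(\ref{eq:deterministicgradient}) is precisely~(\ref{eq:algo}) driven by the \emph{deterministic} observation whose $i$-th block equals the negative gradient $-\nabla f_i(\theta_{n-1,i})$; note that the destination of pair $i$ knows $\bs A^i$ and can therefore evaluate this full gradient. Accordingly I would take each $\mu_\bth$ to be the Dirac mass at $-(\nabla f_1(\theta_1)^T,\dots,\nabla f_N(\theta_N)^T)^T$, where $\bth=(\theta_1^T,\dots,\theta_N^T)^T$. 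With this choice the observation-model hypotheses are essentially automatic: Assumption~\ref{hyp:model}(c) holds because $\bY_{n+1}$ is a deterministic function of $\thn$ (so the conditional-independence and Markov requirements are trivial), Assumption~\ref{hyp:model}(e) holds with a vanishing martingale-difference term, and Assumption~\ref{hyp:model}(f) reduces to $\sup_{\bth\in G^N}\sum_{i=1}^N|\nabla f_i(\theta_i)|^2<\infty$, which follows from continuity of the gradients on the compact set $G$. Assumptions~\ref{hyp:model}(a,b) and~\ref{hyp:step} are granted by hypothesis.

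Next I would check the constraint regularity of Assumption~\ref{hyp:KT} and the empty-interior condition on $f(\cL)$. The feasible set $G=\{\theta\in\bR^N:0\le p^i\le\cP^i\}$ is a nonempty compact box cut out by the $2N$ affine, hence convex and $C^\infty$, constraints $-p^i\le0$ and $p^i-\cP^i\le0$, giving Assumption~\ref{hyp:KT}(a,b). Since $0<\cP^i$, the two constraints attached to a given $i$ are never simultaneously active, so the active gradients at any boundary point involve at most one of $\pm e_i$ for each $i$; they are thus supported on distinct coordinate axes and hence linearly independent, which is Assumption~\ref{hyp:KT}(c). For the empty-interior condition on $f(\cL)$ I would invoke Sard's theorem exactly as in the discussion following Theorem~\ref{the:cvc}: it suffices that $f$ be $N$ times continuously differentiable, since the SINR maps are smooth on the region where every $\mathrm{SINR}_i$ stays positive (the denominators being bounded below by $\sigma_i^2>0$) and $Q$ is smooth.

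The single genuinely delicate point — and the step I expect to be the main obstacle — is Assumption~\ref{hyp:model}(d), the continuous differentiability of each $f_i$ on all of $G$, and correspondingly the smoothness needed for Sard. Near the face $\{p^i=0\}$ one has $\mathrm{SINR}_i\approx c\,p^i$ with $c>0$, so $P_{e,i}=Q(\sqrt{\mathrm{SINR}_i})$ behaves like $\tfrac12-c'\sqrt{p^i}$ and its gradient blows up on that face; thus $P_{e,i}$ is not $C^1$ there, which also invalidates the square-root behaviour needed for the Sard argument. I would resolve this via the remark following~(\ref{eq:sumpe}): replace each $P_{e,i}$ by a genuinely smooth increasing function of the SINR, for which $\mathrm{SINR}_i$ being a smooth bounded function of $\theta$ on $G$ immediately yields $f_i\in C^\infty(G)$ without affecting any of the structure used above. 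With the utilities chosen in this way, all hypotheses of Theorem~\ref{the:cvc} are met, and that theorem gives a.s. convergence of $\thn$ to $\{\un\otimes\theta:\theta\in\cL\}$; in particular $\athn$ converges a.s. to the set $\cL$ of KKT points of~(\ref{eq:sumpe}), which is the asserted conclusion.
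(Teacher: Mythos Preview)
Your approach is exactly the paper's: the corollary is stated without proof and is meant as an immediate application of Theorem~\ref{the:cvc}, so verifying Assumptions~\ref{hyp:model}, \ref{hyp:step}, \ref{hyp:KT} and the empty-interior condition is precisely what is required. Your checks of the box-constraint regularity, the deterministic observation model via Dirac $\mu_\bth$, and the Sard argument are all correct and in fact more explicit than anything the paper writes down.

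Your flagging of the failure of $C^1$-regularity of $P_{e,i}$ on the face $\{p^i=0\}$ is accurate and is a genuine lacuna in the paper's own statement, not in your argument: $Q(\sqrt{x})$ has derivative $-\tfrac{1}{2\sqrt{2\pi x}}e^{-x/2}$, which indeed blows up as $x\downarrow 0$, so Assumption~\ref{hyp:model}(d) and the $C^N$ smoothness needed for Sard both fail for the utilities exactly as written in~(\ref{eq:Pe}). Your proposed fix---invoking the paper's own remark that the $P_{e,i}$ may be replaced by any smooth increasing function of the SINR---is the natural resolution and is consistent with the spirit of the section; the alternative would be to shrink $G$ slightly away from $\{p^i=0\}$, which also restores all hypotheses. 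Either way, the substance of your argument is correct and matches the intended proof.
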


\begin{remark}
In many situations, the channel gains are random and rapidly
time-varying. In this case, it is more realistic to assume that each destination
$i$ observes a sequence of random i.i.d. channel gains $(\bs A^i_n)_{n\geq 1}$.  The
algorithm~(\ref{eq:deterministicgradient}) can be extended to this context without difficulty.
This yields an algorithm which for solving the following optimization problem:
\begin{equation}
  \min_{\theta\in G} \sum_{i=1}^N \beta_i\, \bE[P_{e,i}(\theta,\bs A^i_n)]\ .
  \label{eq:sumpealea}
\end{equation}
\end{remark}

\section{Numerical Results}
\label{sec:simu}

\subsection{Scenario \#1}

As a benchmark, we address the convex optimization scenario formulated
in~\cite{nedic:tac-2011}.  Define $G\subset \bR^2$ as the unit disk in $\bR^2$
centered at the origin.  Consider the minimization of $\sum_{i=1}^N f_i(\theta)$
w.r.t. $\theta\in G$, where for any $i=1,\cdots,N$,
$f_i(\theta):=\bE[(R_i-s_i^T\theta)^2]$. Here, $(R_1,\cdots,R_N)$ is a
collection of i.i.d. real Gaussian distributed random variables with mean $0.5$
and unit variance, and $(s_1,\cdots,s_N)$ is a collection of deterministic
elements of $\bR^2$.  The number of agents is set as $N=10$ or $N=50$.  We used
different graphs: the complete graph where any agent is connected to all
other agent, and the cycle.  We evaluate the performance of both pairwise and
broadcast algorithms described in Section~\ref{sec:gossip}.  The weighting
coefficient $\beta$ used to compute the average is set to $\beta=0.5$.  As for
comparison, we also evaluate the performance of the broadcast-based algorithm
of~\cite{nedic:tac-2011}.  The common point between the algorithm
of~\cite{nedic:tac-2011} and the broadcast algorithm described in
Section~\ref{sec:gossip} is that they both rely on the broadcast gossip scheme
of~\cite{aysal:2009} but the core of the algorithms is rather different as
explained in Section~\ref{sec:intro}. In order to distinguish both broadcast
algorithms, we will designate the algorithm of~\cite{nedic:tac-2011} as the
\emph{broadcast algorithm with sleeping phases}, refering to the fact that each
agent does not update its estimates as long as it is not the recipient of a
message. On the other hand, we refer to the broadcast algorithm of
Section~\ref{sec:gossip} as the \emph{broadcast algorithm without sleeping
  phases}.

It is worth remarking that a fair comparison between different
stochastic approximation algorithms is generally a delicate task,
because the behavior of each particular algorithm is sensitive to the
choice of the stepsize. In this paragraph, we simply set $\gamma_n =
\gamma_0/n^\xi$ for all $n$, where $\gamma_0>0$ and $0.5<\xi\leq 1$ are parameters chosen
in an ad-hoc fashion. More degrees of freedom are of course possible
when choosing $\gamma_n$, but a complete discussion would be out the scope
of this paper.  Recall that the algorithm of~\cite{nedic:tac-2011}
requires a more specific choice of the stepsize which 
solely depends on the initial step. We shall denote by $\gamma_0^s$ the latter initial
stepsize used with the algorithm~\cite{nedic:tac-2011}, where the upperscript $s$
stands for \emph{sleeping phases}.

For each algorithm, we evaluate the deviation of the estimates from the global minimizer
$\theta_\star$:
$$
\Delta_n :=\left(\frac 1N \sum_{i=1}^N |\theta_{n,i}-\theta_\star|^2\right)^{1/2}\ .
$$
Note that $\Delta_n$ depends on the parameters $(s_1,\cdots,s_N)$.
We consider 50 Monte-Carlo runs, each of them consisting of 10000 iterations
of each algorithm. For each run, we randomly select the parameters 
$(s_1,\cdots,s_N)$ according to the uniform distribution on the unit disk $G$.
The $k$th Monte-Carlo run yields a sequence $(\Delta_n^{(k)} : 1\leq n\leq 10000)$ for each algorithm.

Figure~\ref{fig:toy}  represents
the average deviation $(1/50)\sum_{k=1}^{50}\Delta_n^{(k)}$ as a function of the number $n$ of iterations.
In Figure~\ref{fig:toy}(a), we set $N=50$ and the graph is a cycle.
In Figure~\ref{fig:toy}(b), we set $N=10$ and the graph is a complete graph.
\begin{figure}[h]
  \centering
 \includegraphics[width=9cm]{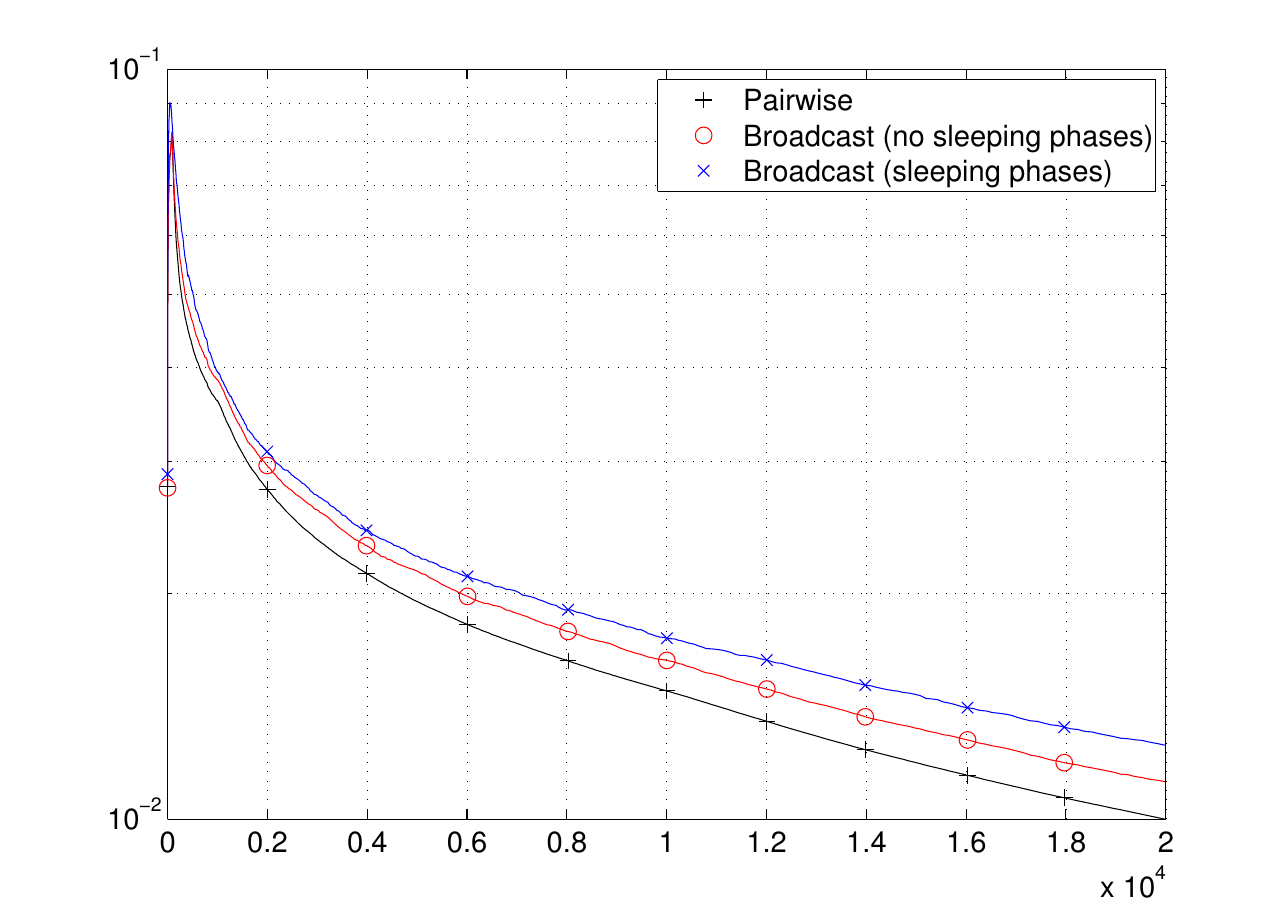} \\    {\small $(a)$}\\
\includegraphics[width=9cm,height=6.7cm]{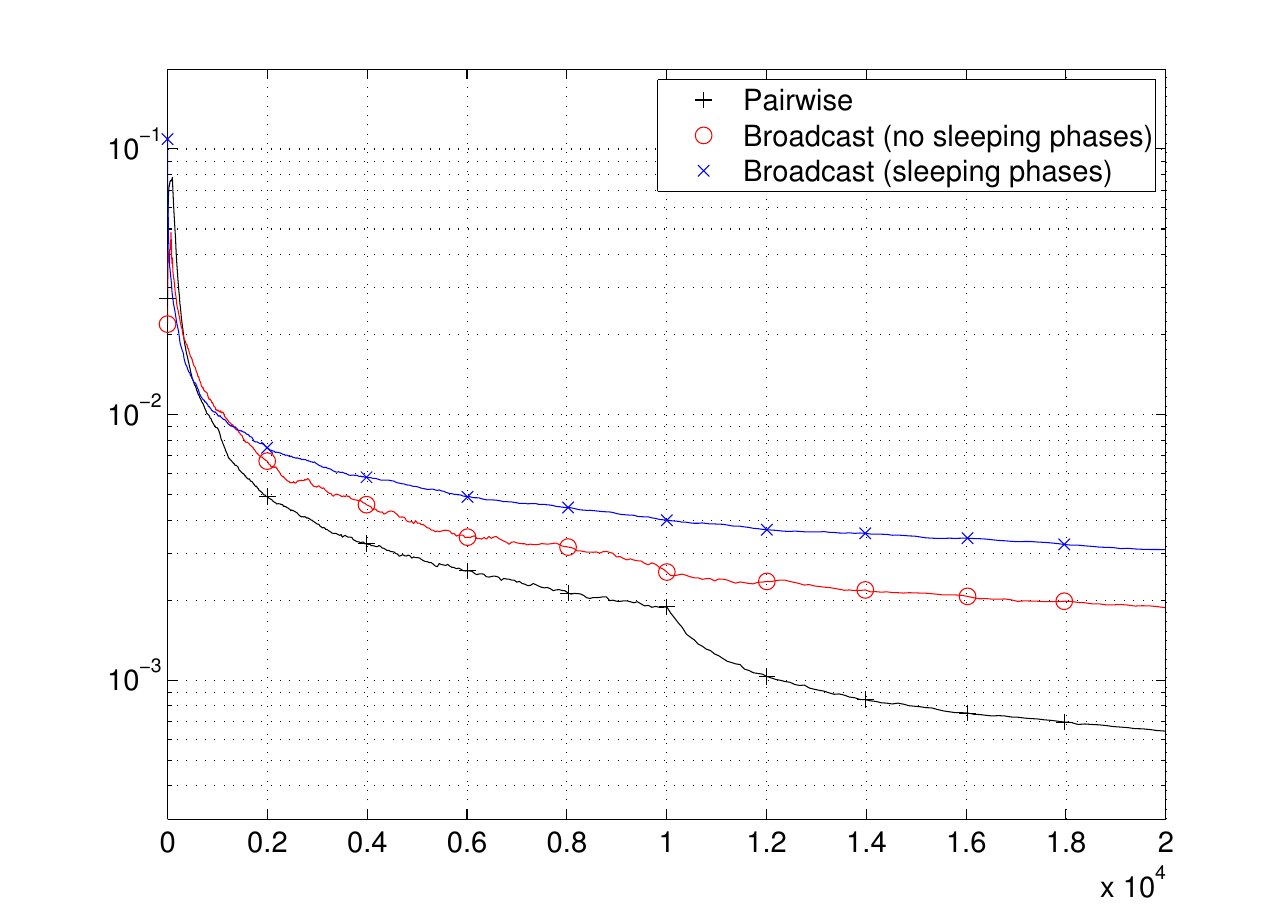}\\  {\small $(b)$}
  \caption{Average deviation as a function of the number of iterations 
    $(a)$ Cycle, $N=50$, $\gamma_n=0.1/n^{0.9}$, $\gamma_0^s=0.1$  - $(b)$ Complete graph, $N=10$, $\gamma_n=1/n^{0.9}$, $\gamma_0^s=5$}
  \label{fig:toy}
\end{figure}
It is worth noting that the pairwise gossip algorithm behaved at least as well
as both broadcast based algorithms in our experiments. This fact might seem
surprising at first glance. Indeed, in the framework of average consensus
\emph{i.e.}, when the aim is not to optimize an objective function but simply to
compute an average in a distributed fashion~\cite{boyd:2006}, the broadcast
gossip algorithm of~\cite{aysal:2009} is known to {\sl i)} reach a consensus
faster than the pairwise algorithm of~\cite{boyd:2006} and {\sl ii)} fail to
converge to the desired value. In the context of distributed optimization, a
different phenomenon happens: our theoretical analysis showed that broadcast
based optimizers do converge to the desired value. However, in this example,
there is no clear gain in using a broadcast-based algorithm. Convergence has
been established in Theorem~\ref{the:cvc}.Appendix~\ref{app:average} reveals
that part of the perturbation (denoted $\xi_n$) is due to the fact that
$\un^TW_n\neq \un^T$ (see the term $\xi_n^{(1)}$ at
equation~(\ref{eq:xinu})). This part of the perturbation is clearly zero when
$W_n$ is doubly stochastic. This is the case for the pairwise algorithm, but not
for the broadcast algorithm.

As a conclusion, there should be interesting comparisons to make between
pairwise optimizer and the broadcast ones.
% We also note that, at least in our simulation context, the broadcast algorithm without sleeping phases
% outperforms the broadcast algorithm of~\cite{nedic:tac-2011}. This remark should be taken with some caution.
% Indeed, in  the broadcast algorithm without sleeping phases, we relevantly set the stepsize $\gamma_n$ in an ad-hoc fashion
% so as to improve the convergence speed of the algorithm, whereas such a leeway does not exist in~\cite{nedic:tac-2011}.

\subsection{Scenario \#2}

Consider the distributed power allocation algorithm of Section~\ref{sec:appli}.
In order to validate the proposed algorithm, we study the 2$\times $2
interference channels shown in Figure~\ref{fig:ic}.  As a toy but revealing
example, first assume fixed channel gains chosen as $A^{1,1}=A^{2,2}=2$,
$A^{1,2}=A^{2,1}=1$. The noise variance is equal to $\sigma_1^2=\sigma_2^2=0.1$.
The powers $p^1$ and $p^2$ of the users must not exceed a maximum power of
$\cP_1=\cP_2=10$. The aim is to minimize the weighted sum of the error
probabilities as in~(\ref{eq:sumpe}) where $\beta_1=2/3$,
$\beta_2=1/3$. Strictly speaking, we actually implement a distributed gradient
descent w.r.t. to the parameter vector $\theta$ in log-scale in order to avoid
slow convergence.
\begin{figure}[h]
  \centering
  \includegraphics[width=9cm]{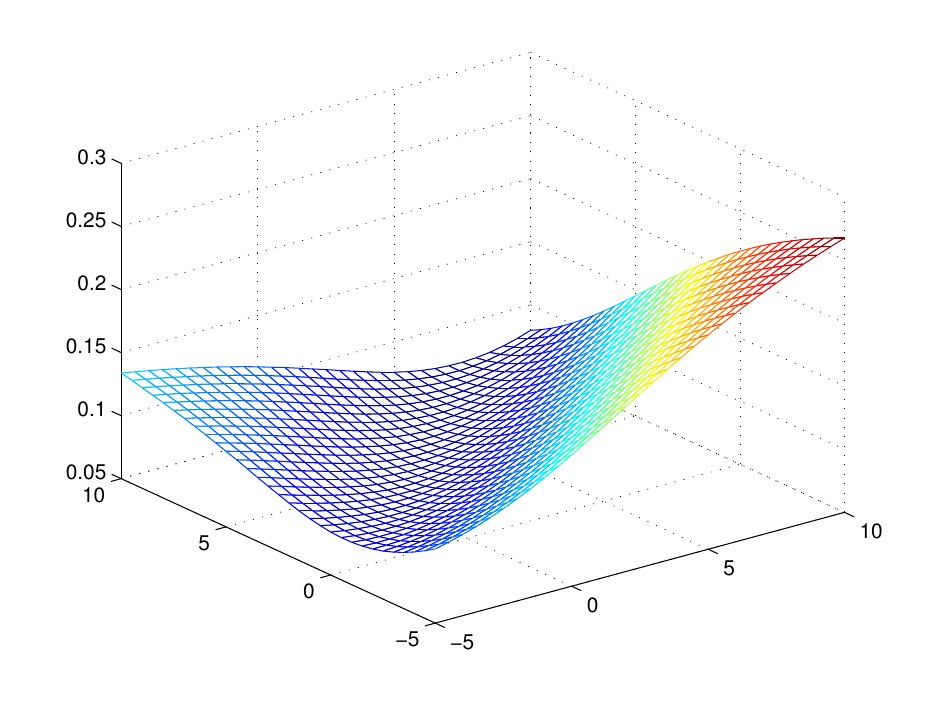}\\    {\small $(a)$}\\
\includegraphics[width=9cm]{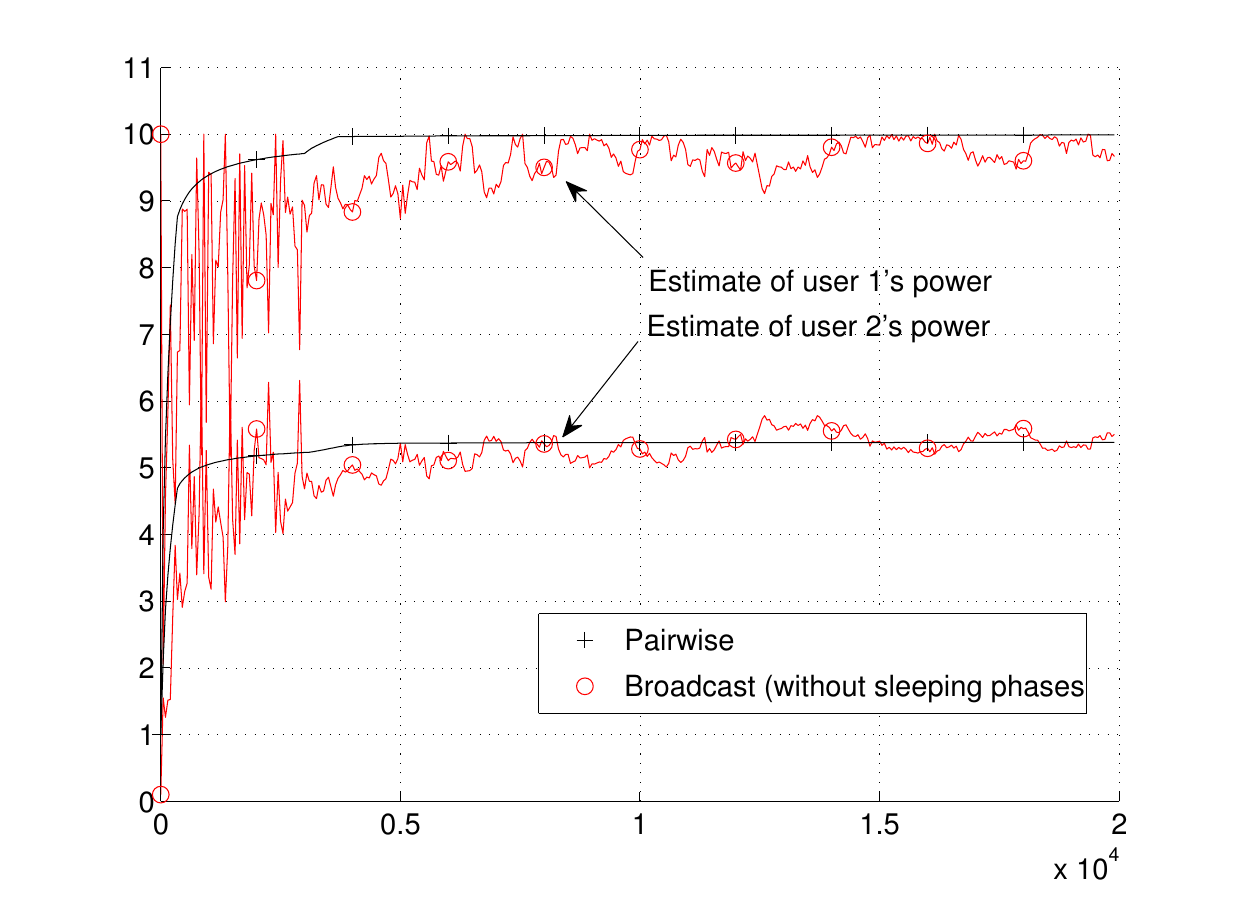}\\    {\small $(b)$}
  \caption{(a) Weighted sum of error probabilities for $N=2$ as a function of
    the powers $p^1$ and $p^2$ in dB - Fixed Determinitic channels -
    $A^{1,1}=A^{2,2}=2$, $A^{1,2}=A^{2,1}=1$ - $\beta_1=2/3$, $\beta_2=1/3$ -
    $\sigma_1^2=\sigma_2^2=0.1$ - $\cP_1=\cP_2=10$. The minimum is achieved at
    point $(p^1,p^2)=(10,5.4)$. (b) First agent's estimates of $p^1$ and $p^2$ as a function of the
    number of iterations - $\gamma_n=200/n^{0.7}$ for $n\leq 3000$ - $\gamma_n=30/n^{0.7}$ for $n> 3000$.}
  \label{fig:coutalloc}
\end{figure}
Figure~\ref{fig:coutalloc}(a) represents the objective function~(\ref{eq:sumpe})
w.r.t. $(p^1,p^2)$ in dB (the $x$-axis and $y$-axis are $10\log_{10} p^1$ and
$10\log_{10} p^2$ respectively).  On this example, there exists a unique minimum
achieved at point $(p^1,p^2)=(10,5.4)$.  Figure~\ref{fig:coutalloc}(b)
represents, on a single run, the trajectory of the estimates
$\theta_{n,1}=(p^1_{n,1},p^2_{n,2})$ of the \emph{first} agent as a function of
the number of iterations.
% based on a single realization of the sequence $(W_n)_{n\geq 1}$.
  We compare the pairwise and the broadcast gossip schemes. Note that we only plot
the result for the broadcast scheme without sleeping phase, as we observed slow
convergence of the algorithm of~\cite{nedic:tac-2011} on this particular
example.  The two upper curves represent the estimate of power $p_1$ (using a
pairwise and a broadcast scheme respectively) while the two lower curves
represent the estimate of power $p_2$.  Each algorithm converges to the desired
value $(10,5.4)$. However, the convergence curve is rather smooth in the
pairwise case, and is more erratic in the broadcast case.  Indeed, matrices
$W_n$ are non doubly stochastic in the broadcast scheme.  As already explained
above, non doubly stochastic matrices introduce an artificial noise term which
is the main cause of the erratic shape of the trajectory.

We finally provide numerical results in the case where channel gains are random
and time-varying.  We assume Rician fading~\cite[Section 2.4]{tse:2005}.  For
any $n$, we set $\bE A^{1,1}_n=\bE A^{2,2}_n=2$, $\bE A^{1,2}_n=\bE
A^{2,1}_n=1$.  The variance of each channel gain is 0.5. Components of $\bs A_n$
are assumed independent.
\begin{figure}[h]
  \centering
  \includegraphics[width=9cm]{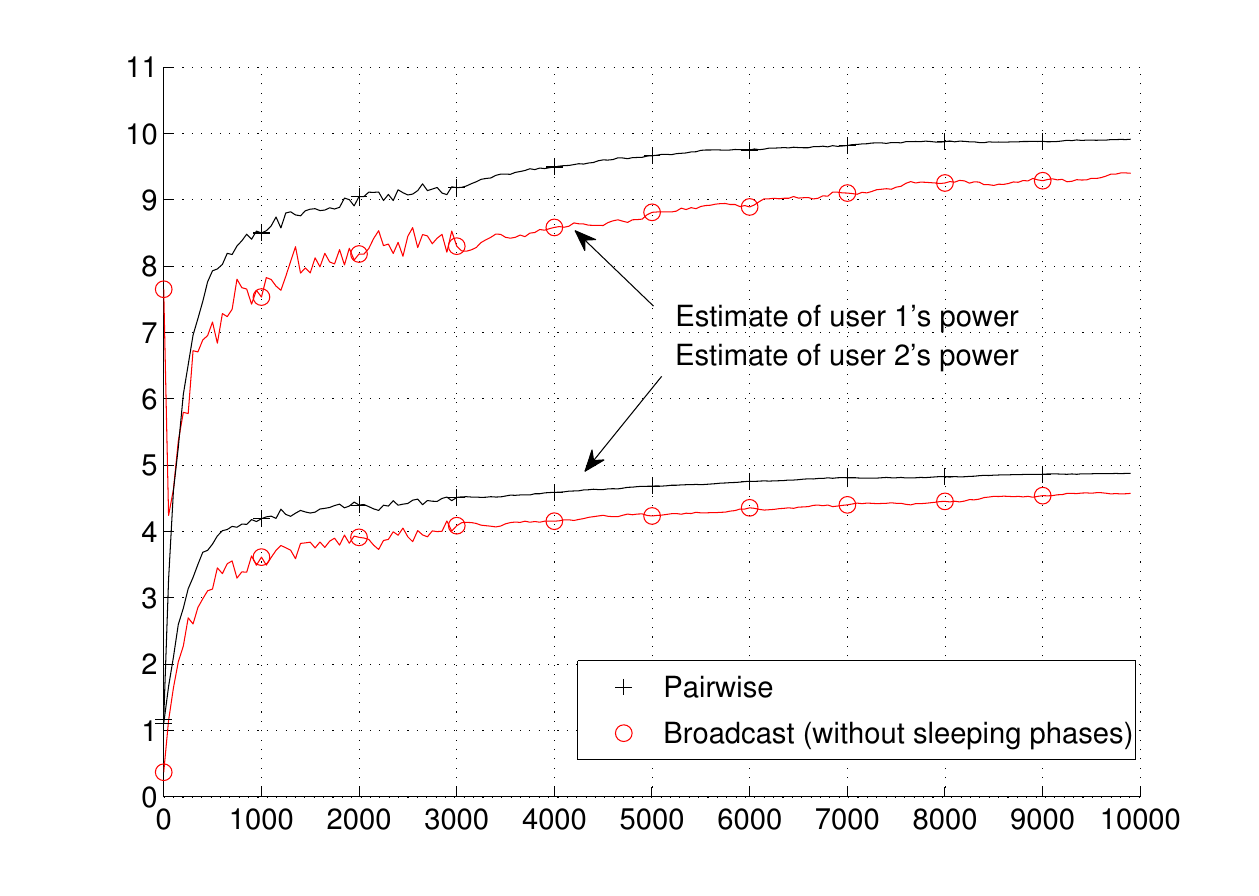}

  \caption{Powers $p^1$ and $p^2$ as a function of the number of iterations,
    averaged w.r.t. 50 Monte-Carlo runs - $\gamma_n=200/n^{0.7}$ for $n\leq 3000$ - $\gamma_n=30/n^{0.7}$ for $n> 3000$.}
  \label{fig:convergencenoisy}
\end{figure}
Figure~\ref{fig:convergencenoisy} represents the average trajectory of the
estimates $\theta_{n,1}=(p^1_{n,1},p^2_{n,2})$ of the first agent as a function
of the number of iterations.  Trajectories have been averaged based on 50
Monte-Carlo runs. Once again, we observe convergence of the distributed
algorithms. The convergence is faster in the pairwise case.

\section{Conclusion}

We introduced a new framework for the analysis of a class of constrained
optimization algorithms for multi-agent systems.  The methodology uses recent
powerful results about dynamical systems which do not rely on the convexity of
the objective function, thus addressing a wider range of practical distributed
optimization problems.  Also, the proposed framework allows to alleviate the
common assumption of double-stochasticity of the gossip matrices, and therefore
encompasses the natural broadcast gossip scheme.  The algorithm has been proved
to converge to a consensus.  The interpolated process of average estimates is
proved to be a perturbed solution to a differential variational inequality,
w.p.1. As a consequence, the average estimate converges almost surely to the set
of KKT points.
%Future work include the relaxation of the convexity assumption on the set of constraints.
%% which does not appears to be mandatory at least in a centralized setting
%(see~\cite{kushner:1978}).

\section*{Acknowledgments}

We are grateful to the anonymous Reviewers for their useful comments.
We thank Philippe Ciblat for relevant remarks regarding Section V, as well as
Eric Moulines and Filippo Santambrogio for fruitful discussions.

\appendices

\section{Proof of Proposition~\ref{prop:average}}
\label{app:average}

From~(\ref{eq:algo}) and Assumption~\ref{hyp:model}(e), it is straightforward to
show that the decomposition~(\ref{eq:rmc}) holds if one sets:
$$
r_n = \nabla f(\athnmu)-\frac 1N\sum_{i=1}^N\nabla f_i(\theta_{n-1,i})
$$
and $\xi_n=\xi_{n}^{(1)}+\xi_n^{(2)}$ where:
\begin{eqnarray}
  \label{eq:xinu}
  \xi_n^{(1)}\!\!\!\! &:=&\!\!\!\!\! \frac 1{\gamma_n}\left(\!\frac {\un^TW_n-\un^T}N\!\otimes\!
    I_d\!\right)P_{G^N}\!\left[\thnmu+\gamma_n\bY_n\right]\\
  \xi_n^{(2)}\!\!\!\! &:=& \!\!\!\la\bs Z_n\ra-\bE[\la\bs Z_n\ra\,|\cF\nmu] \nonumber
\end{eqnarray}
where $\bs Z_n$ is given by (\ref{eq:Zn}).
We first prove that $r_n$ tends to zero. Remark that:
$$
|r_n|\leq \frac 1N\sum_{i=1}^N |\nabla f_i(\athnmu) - \nabla
f_i(\theta_{n-1,i})|\ .
$$
Each gradient $\nabla f_i$ is continuous, and thus uniformly continuous on the
compact set $G$.  By Lemma~\ref{lem:agreement}, $|\athnmu - \theta_{n-1,i}|$
converges to zero a.s. for any $i$.  Therefore, $r_n$ converges a.s. to zero.
To prove Proposition~\ref{prop:average}, it is thus sufficient to show that
$\sup_{k\geq n}\left|\sum_{\ell=n}^k \gamma_\ell\xi_{\ell}^{(j)}\right|$ tends
to $0$ when $n\to\infty$ for $j=1,2$.  First, consider $\xi_n^{(1)}$. Recalling
that $W_n$ is row-stochastic, it follows that $\{(\un^TW_n-\un^T)\otimes
I_d\}J=0$. Thus, one may write:
\begin{equation*}
  \gamma_n \xi_n^{(1)} = \left(\frac {\un^TW_n-\un^T}N\otimes
    I_d\right)(\othnmu+\gamma_n\bsZ_n)\ ,
\end{equation*}
where the random vector $\bsZ_n$ is given by~(\ref{eq:Zn}).  Define
$M_n:=\sum_{k=1}^n \gamma_k\xi_{k}^{(1)}$. It is straightforward to show that
$M_n$ is a martingale adapted to $(\cF_n)_{n\geq 1}$. Indeed, by
Assumption~\ref{hyp:model}(c), $W_n$ and $\bsZ_n$ are independent conditioned
on $\cF\nmu$. Therefore:
\begin{multline*}
  \bE[\gamma_n \xi_n^{(1)}\,|\cF\nmu] = \left(\frac {\un^T\bE
      (W_n)-\un^T}N\otimes I_d\right)\\ \times (\othnmu+\gamma_n\bE[\bsZ_n|\cF\nmu])\ =0
\end{multline*}
because  $\un^T\bE (W_n)=\un^T$ by Assumption~\ref{hyp:model}(a).  We
derive:
\begin{eqnarray*} 
  \bE |M_n|^2 \!\! \!\!\!\! &=&\!\!\!\! \sum_{k=1}^n \bE\left[ \left|\left(\frac
        {\un^TW_k-\un^T}N\otimes I_d\right)(\othkmu+\gamma_k\bsZ_k) \right|^2\right] \\
  &=&\!\!\!\!  \sum_{k=1}^n \bE\left[ (\othkmu+\gamma_k\bsZ_k)^T\cV_k (\othkmu+\gamma_k\bsZ_k)\right]\\
  &=&\!\!\!\!  \sum_{k=1}^n \bE\left[ (\othkmu+\gamma_k\bsZ_k)^T \bE(\cV_k)(\othkmu+\gamma_k\bsZ_k)\right]
\end{eqnarray*}
where $\cV_k = \left(\frac
      {(W_k^T\un-\un)(\un^TW_k-\un^T)}{N^2}\otimes I_d\right)$
and where the last equality is obtained by integrating the inner terms w.r.t. $W_k$ only.
Remark that
$\bE[(W_k^T\un-\un)(\un^TW_k-\un^T)]=\bE[W_k^T\un\un^TW_k]-\un\un^T$. As the
spectral radius of matrix $\bE[W_k^T\un\un^TW_k]$ is uniformly bounded, there
exists a constant $C'>0$ such that:
\begin{eqnarray*}
  \bE |M_n|^2 \!\!\!\! &\leq&  \!\!\!\!C'\,\sum_{k=1}^\infty \bE\left[ |\Jo\thkmu+\gamma_k\bsZ_k|^2\right]\\
  &\leq& \!\!\!\! 2 C'\,\sum_{k=1}^\infty \bE\left[ |\Jo\thkmu|^2\right]+2
  C'\,\sum_{k=1}^\infty\gamma_k^2\,\bE\left[ |\bsZ_k|^2\right].
\end{eqnarray*}
By Lemma~\ref{lem:agreement}, the first term in the righthand side of the above
inequality is finite. Recalling that $|\bsZ_k|\leq |\bY_k|$, we deduce from
Assumption~\ref{hyp:model}(f) that $\bE|\bsZ_k|^2$ is uniformly bounded. As
$\sum_k\gamma_k^2<\infty$, we conclude that $\sup_n\bE |M_n|^2<\infty$. This
implies that the martingale converges a.s. to a finite random variable
$M_\infty$. Thus, for any $k\geq n$,
$$
\left|\sum_{\ell=n}^k \gamma_\ell\xi_{\ell}^{(1)}\right| =
\left|(M_{k}-M_{\infty})-(M_{n-1}-M_{\infty})\right|\ .
$$
Thus, $\sup_{k\geq n}\left|\sum_{\ell=n}^k \gamma_\ell\xi_{\ell}^{(1)}\right|$ tends a.s. to zero as $n\to\infty$.

We now study $\xi_n^{(2)}$. Clearly, $\xi_n^{(2)}$ is a martingale difference
noise sequence. Therefore,
\begin{eqnarray*}
  \bE\left[\left|\sum_{k=1}^n \gamma_k \xi_k^{(2)}\right|^2\right]
  &=&\sum_{k=1}^n \gamma_k^2\, \bE\left[\left|\la %\left(\frac{\un^T}N\otimes I_d\right)(
\bs Z_k-\bE[\bs Z_k\,|\cF_{k-1}]
\ra\right|^2\right]\\
  &\leq& \sum_{k=1}^\infty \gamma_k^2\, \bE\left[\left|\bs Z_k\right|^2\right] \\
&\leq&
  \sup_{n\geq 1}\bE\left|\bY_n\right|^2 \sum_{k=1}^\infty \gamma_k^2 \ , %<\infty\ .
\end{eqnarray*}
where we used the fact that $|\bY_k|\leq |\bs Z_k|$ for any $k$.
Note that $\bE[\left|\bY_n\right|^2|\cF\nmu]$ is uniformly bounded with $\sup_{\bth} \int |\by|^2\sd\mu_{\bth}$
by Assumption~\ref{hyp:model}(c). The latter constant is finite by Assumption~\ref{hyp:model}(f).  
Therefore, $\sup_{n\geq 1}\bE\left|\bY_n\right|^2<\infty$.
Since $\sum_{k} \gamma_k^2<\infty$, we conclude that $\sup_{k\geq n}\left|\sum_{\ell=n}^k \gamma_\ell\xi_{\ell}^{(2)}\right|$
tends to zero using the same arguments.  This completes the proof of
Proposition~\ref{prop:average}.

\section{Proof of Lemma~\ref{lem:lambda}}
\label{app:lambda}

Let us define $Q(\theta)$ as the matrix $Q(\theta):=[\nabla q_j(\theta)]_{j\in
  A(\theta)}$.  Denote by $\Lambda_1(\theta)$ the smallest eigenvalue of
$Q(\theta)^T Q(\theta)$. We first show that $\Lambda_1(\theta)$ is lower
semicontinuous \emph{i.e.}, for a sequence $\theta_n\in G$ converging to
$\theta_*\in G$:
\begin{equation}
  \label{eq:lsc}
  \Lambda_1(\theta_*)\leq \liminf_n \Lambda_1(\theta_n)\;.
\end{equation}
Continuity of all functions $q_j$ ensures that $A(\theta)$ is upper
semicontinuous, \emph{i.e.}, for any $\theta$ in a neighborhood of $\theta_*$,
$A(\theta)\subset A(\theta_*)$. Hence, for $n$ large enough $A(\theta_n)\subset
A(\theta_*)$. Denote by $\tilde Q(\theta_n)$ the matrix $d\times p$:
\[
\tilde Q(\theta_n) = [\nabla q_j(\theta_n) \bs 1_{A(\theta_*)}(j)]_{j= 1\cdots
  p}
\]
where $\bs 1_A$ stands for the indicator function of set $A$. There exists a
sequence of $p\times  1$ vectors $\tilde v_n$ with unit norm such that $|\tilde
Q(\theta_n)\tilde v_n|^2 = \Lambda_1(\theta_n)$ and $\tilde v_n(j) = 0$ if
$j\not\in A(\theta_n)$. Since $\tilde v_n$ has unit norm, one can extract a
converging subsequence $\tilde v_{\phi(n)}$ towards a unit norm $p \times  1$ vector
$\tilde v_*$ such that $|\tilde Q(\theta_{\phi(n)})\tilde v_{\phi(n)}|^2$
converges to $\liminf_n\Lambda_1(\theta_n)$. Using the inclusion
$A(\theta_n)\subset A(\theta_*)$ one has $\tilde v_*(j) = 0$ when $j\notin
A(\theta_*)$. Moreover, under Assumption~\ref{hyp:KT}(b), functions $\nabla q_j$
are continuous, which implies that $\tilde Q(\theta_n)$ converges to
$Q(\theta_*)$, hence vector $\tilde v_*$ satisfies $|\tilde Q(\theta_*)\tilde
v_*|^2 = \lim_n |\tilde Q(\theta_{\phi(n)})\tilde v_{\phi(n)}|^2$.  Since
$\tilde v_*(j) = 0$ when $j\notin A(\theta_*)$, there exists a vector $v_*$ such
that $|Q(\theta_*)v_*|^2 = |\tilde Q(\theta_*)\tilde v_*|^2$. Hence
\[
\Lambda_1(\theta_*)\leq |Q(\theta_*)v_*|^2 = |\tilde Q(\theta_*)\tilde v_*|^2 =
\liminf_n\Lambda_1(\theta_n)\;.
\]
This proves (\ref{eq:lsc}).  Under Assumption~\ref{hyp:KT}(a) $G$ is a compact
set, so lower semicontinuity of $\Lambda_1(\theta)$ ensures that $\Lambda_1$
reaches its minimum $m>0$ ($m=0$ would contradict Assumption~\ref{hyp:KT}(c)).
% \[
% \exists M>0,\forall \theta\in G,\mskip 20mu\Lambda_1^{-1}\{Q(\theta)^T
% Q(\theta)\}\leq M
% \]
Now, let us denote by $\lambda:=(\lambda_j(\theta,\gamma,y))^T_{j\in
  A(\theta+\gamma y)}$ and $v := \frac1{\gamma}(\theta+\gamma y-P_G(\theta+\gamma y))$. One has
\[
\lambda \!\!= \!\!\left(Q\big(P_G(\theta+\gamma y)\big)^TQ\big(P_G(\theta+\gamma
  y)\big)\right)^{-1}\!\!\! Q\left(P_G(\theta+\gamma y)\right)^T\!\!v.
\]
Hence $|\lambda| \leq \Lambda_1^{-1}(P_G(\theta+\gamma y))|Q(P_G(\theta+\gamma
y))^T v|$.  Continuity of $\nabla q_j$ and compactness of $G$ ensure the
existence of $L>0$ such that: $|Q(P_G(\theta+\gamma y))^T v|\leq L|v|$. To
conclude, remark that $|v|\leq |y|$ so $|\lambda|\leq \frac Lm|y|$. Hence,
\begin{multline*}
  \int\lambda_j(\theta_i,\gamma,y_i)^2\sd\mu_\bth(y_1,\dots,y_N) \\ \leq \left(\frac
  Lm\right)^2\int|y_i|^2\sd\mu_\bth(y_1,\dots,y_N)<\infty\;.
\end{multline*}

\section{Proof of Lemma~\ref{lem:Aepsilon}}
\label{app:Aepsilon}

Define constant $M_1$ as the supremum in equation~(\ref{eq:suplambda}): $0\leq
M_1<\infty$ by Lemma~\ref{lem:lambda}. We set $M_2=\sup_{\theta\in G,j=1\cdots
  p}|\nabla q_j(\theta)|$.  Define for any $x> 0$, $M(x) = \sup_{\bth\in G^N}\int\bs 1_{\{|y|>x^{-1/2}\}}\sd\mu_{\bs\theta}(y)$ and:
$$
\epsilon(x) = \sqrt{x}+x+ 2p\sqrt{M_1}\phi(\sqrt{x}+x) +
2p\sqrt{M_1}M_2\left(M(x)\right)^{1/2}
$$
where we recall the definition~(\ref{eq:phi}) of $\phi$.  Using the fact that
$\phi(x)$ tends to zero as $x\downarrow 0$ and using the tightness of the family
$(\mu_{\bth})_{\bth\in G^N}$ (which is a consequence of
Assumption~\ref{hyp:model}(f)), it is straightforward to show that $\epsilon(x)$
tends to zero as $x\downarrow 0$.  
Set $y_{1:N} :=y_1,\dots,y_N$.
We decompose $-g_\gamma(\bth)$ as
$-g_\gamma(\bth)=:s_\gamma(\bth)+t_\gamma(\bth)+u_\gamma(\bth)$ where:
\begin{multline*}
    s_\gamma(\bth) = \frac 1{N}\sum_{i=1}^N\int \sum_{j\in A(P_{G}[\theta_i+\gamma y_i])} \lambda_j(\theta_i,\gamma,y_i)\bs 1_{|y_i|\leq 1/\sqrt{\gamma}}  \\ \times \nabla q_j(\ath)d\mu_\bth(y_{1:N})
\end{multline*}
\begin{multline*}
  t_\gamma(\bth) = \frac 1{N}\sum_{i=1}^N\int \sum_{j\in A(P_{G}[\theta_i+\gamma y_i])} \lambda_j(\theta_i,\gamma,y_i) \bs 1_{|y_i|>1/\sqrt{\gamma}}  \\ \times \nabla q_j(P_{G}[\theta_i+\gamma y_i])\sd\mu_\bth(y_{1:N})
\end{multline*}
\begin{multline*}
  u_\gamma(\bth) = \frac 1{N}\sum_{i=1}^N\int \sum_{j\in A(P_{G}[\theta_i+\gamma y_i])} \lambda_j(\theta_i,\gamma,y_i)\bs 1_{|y_i|\leq 1/\sqrt{\gamma}} \\ \times  (\nabla q_j(P_{G}[\theta_i+\gamma y_i])-\nabla q_j(\ath))\sd\mu_\bth(y_{1:N})\,.  
\end{multline*}
Consider first $s_\gamma(\bth)$. When the indicator $\bs 1_{|y_i|\leq
  1/\sqrt{\gamma}}$ is active (equal to one), inequality $|y_i|\leq
1/\sqrt{\gamma}$ holds true. In this case,
\begin{eqnarray*}
  |P_{G}[\theta_i+\gamma y_i]-\ath|&\leq& |P_{G}[\theta_i+\gamma y_i]-\theta_i|+|\theta_i-\ath| \\
&\leq&  |\gamma y_i| + |\oth| \leq \sqrt{\gamma}+ |\oth|\\
  &\leq& \epsilon(\gamma\lor |\oth|)\ .
\end{eqnarray*}
Therefore, as soon as $|y_i|\leq 1/\sqrt{\gamma}$, $A(P_{G}[\theta_i+\gamma
y_i])$ is included in the set $\cA(\ath,\epsilon(\gamma\lor |\oth|))$ where
$\cA$ is defined by~(\ref{eq:Aepsilon}). As a consequence,
$$
s_\gamma(\bth) = \!\!\!\!\!\!\!\! \sum_{j\in \cA(\ath,\epsilon(\gamma\lor
  |\oth|))}\!\!\!\!\!\!\!\!\alpha_j \nabla q_j(\ath)
$$
where $\alpha_j:= \frac
1{N}\sum_{i=1}^N\int \lambda_j(\theta_i,\gamma,y_i)\bs 1_{|y_i|\leq
  1/\sqrt{\gamma}}\sd\mu_\bth(y_{1:N})$.  By Jensen's inequality, $0\leq \alpha_j\leq \sqrt{M_1}$
for any~$j$.

Consider the second term $t_\gamma(\bth)$. It is straightforward to show from
triangle and Cauchy-Schwartz's inequalities that:
\begin{eqnarray*}
  |t_\gamma(\bth)|\!\!\!\!&\leq& \!\!\!\!\frac {M_2}{N}\sum_{i=1}^N\sum_{j=1}^p \Big(\int (\lambda_j(\theta_i,\gamma,y_i)^2\sd\mu_\bth(y_{1:N})) \Big)^{\frac 12}\\ &&\quad\quad\times \Big(\int(\bs 1_{|y_i|>1/\sqrt{\gamma}}\sd\mu_\bth(y_{1:N}))\Big)^{\frac 12}  \\
  &\leq&\!\!\!\! p\sqrt{M_1}M_2\left(M(\gamma)\right)^{1/2} \leq 0.5\, \epsilon(\gamma\lor |\oth| )\ .
\end{eqnarray*}
Finally, consider $u_\gamma(\bth)$:
\begin{eqnarray*}
  |u_\gamma(\bth)| \!\!\!\!&\leq &\!\!\!\! \frac 1{N}\sum_{i=1}^N \sum_{j=1}^p \int\lambda_j(\theta_i,\gamma,y_i)\bs 1_{|y_i|\leq 1/\sqrt{\gamma}}\\ &&\quad \times |\nabla q_j(P_{G}[\theta_i+\gamma y_i])-\nabla q_j(\ath))|\sd\mu_\bth(y_{1:N})\,.
\end{eqnarray*}
Note that $|\nabla q_j(P_{G}[\theta_i+\gamma y_i])-\nabla q_j(\ath))|\leq \phi(|P_{G}[\theta_i+\gamma y_i]-\ath|)$.
Again, we use the fact that $|P_{G}[\theta_i+\gamma y_i]-\ath|\leq \sqrt{\gamma}+ |\oth|$. As $\phi$ is non decreasing, 
it is clear that $\bs 1_{|y_i|\leq 1/\sqrt{\gamma}}\,  \phi(|P_{G}[\theta_i+\gamma y_i]-\ath|)$ is no larger than $\phi(\sqrt{\gamma}+ |\oth|)$.
Therefore,
$$
|u_\gamma(\bth)| \leq p\sqrt{M_1}\phi(\sqrt{\gamma}+ |\oth|) \leq 0.5\,
\epsilon(\gamma\lor |\oth| )\ .
$$
This completes the proof of Lemma~\ref{lem:Aepsilon}.

\section{Proof of Lemma~\ref{lem:AA}}
\label{app:AA}

For any $\epsilon\geq 0$, $j=1,\cdots,p$, define $\dG_j^\epsilon := \{\theta\in
G : \exists \theta'\in q_j^{-1}(\{0\}), |\theta'-\theta|\leq \epsilon\}$.  It is
useful to remark that $\dG_j^0 =q_j^{-1}(\{0\})\cap G$ is the set of points in
$G$ for which the $j$th constraint is active. In particular, that
$\dG_j^0\subset \dG_j^\epsilon$ for any $\epsilon\geq 0$.  Denote by $\sd_\sH$
the Hausdorff distance between sets. Define:
$$
\delta(\epsilon) = \max_{E\subset\{1,\cdots,p\}}\sd_\sH\left(\bigcap_{j\in E}
  \dG_j^\epsilon\,,\,\bigcap_{j\in E} \dG_j^0\right)\ .
$$
The key point is to show that $\lim_{\epsilon\downarrow 0}\delta(\epsilon)=0$.
By contradiction, assume that this is not the case.  Then there exists a
constant $c>0$ and a sequence $\epsilon_n\downarrow 0$ such that
$\delta(\epsilon_n)>c$ for each $n$. As there is a finite number of subsets of
$\{1,\cdots,p\}$, it is straightforward to show that there exists a certain
subset $E\subset \{1,\cdots,p\}$ such that for any $n\geq 1$,
\begin{equation}
  \label{eq:hausdorff}
  \sd_\sH\left(\bigcap_{j\in E} \dG_j^{\epsilon_n}\,,\,\bigcap_{j\in E} \dG_j^0\right)>c\ .
\end{equation}
First note that $\bigcap_{j\in E} \dG_j^{\epsilon_n}$ is nonempty. Indeed, if it
was empty, $\bigcap_{j\in E} \dG_j^0$ would be empty as well, so that the
Hausdorff distance in the lefthand side of~(\ref{eq:hausdorff}) would be
$\sd_\sH(\emptyset ,\emptyset )=0<c$.  Thus, for any $n\geq 1$, there exists
$\theta_n\in \bigcap_{j\in E} \dG_j^{\epsilon_n}$ such that
\begin{equation}
  \sd\left(\theta_n,\bigcap_{j\in E} \dG_j^0\right)>c\ .
  \label{eq:disttodG}
\end{equation}
The sequence $(\theta_n)_{n\geq 1}$ lies in the compact set $G$. Thus, there
exists a subsequence which converges to some point $\theta_\star\in G$. Without
loss of generality, we shall still denote this subsequence by $(\theta_n)_{n\geq
  1}$ in order to simplify the notations.  We thus consider that
$\lim_{n\to\infty} \theta_n =\theta_\star$.  We shall now prove that
$\theta_\star\in \bigcap_{j\in E} \dG_j^0$.  For any $n\geq 1$, $\theta_n\in
\dG_j^{\epsilon_n}$. Thus, there exists $\theta_n^{(j)}\in G$ such that
$q_j(\theta_n^{(j)})=0$ and $|\theta_n-\theta_n^{(j)}|\leq \epsilon_n$.  As
$q_j$ is convex, it is also Lipschitz on the compact set $G$. Denote by $K_j$
its Lipschitz constant on $G$:
$$
|q_j(\theta_n)|= |q_j(\theta_n)-q_j(\theta_n^{(j)})|\leq K_j \epsilon_n\ .
$$
Since $q_j$ is continuous and $\epsilon_n\downarrow 0$, it follows that
$q_j(\theta_\star)=0$.  Thus $\theta_\star\in \bigcap_{j\in E}
\dG_j^0$. Therefore, by~(\ref{eq:disttodG}), $|\theta_n-\theta_\star|>c$.  This
contradicts the fact that $(\theta_n)_{n\geq 1}$ converges to $\theta_\star$.
This proves that $\delta(\epsilon)$ tends to zero as $\epsilon\downarrow 0$.

It is useful to remark that, as a by-product of the above proof, we also
obtained the following result. Consider any set $E\subset\{1,\cdots,p\}$ and
assume that there exists a sequence $\epsilon_n\downarrow 0$ s.t.  for any
$n\geq 1$ there exists $\theta_n\in \bigcap_{j\in E} \dG_j^{\epsilon_n}$. Due to
the above arguments, any limit point of such a sequence $(\theta_n)_{n\geq 1}$
belongs to the set $\bigcap_{j\in E} \dG_j^0$ which is thus nonempty.  Let us
state this result the other way around: for any $E$ such that $\bigcap_{j\in E}
\dG_j^0=\emptyset$, there exists $\epsilon_0^E>0$ such that for
any~$\epsilon<\epsilon_0^E$, $\bigcap_{j\in E} \dG_j^\epsilon=\emptyset$.  We
set $\epsilon_0 = \min\{\epsilon_0^E : \bigcap_{j\in E} \dG_j^0=\emptyset\}$.

It remains to prove~(\ref{eq:AA}). Let $0<\epsilon<\epsilon_0$ and $\theta\in
G$. Trivially,
$$
\theta \in \bigcap_{j\in \cA(\theta,\epsilon)} \dG_j^\epsilon\ .
$$
As $\epsilon<\epsilon_0$, the set $\bigcap_{j\in \cA(\theta,\epsilon)} \dG_j^0$
is nonempty. There exists $\theta'$ in the latter set such that
$|\theta-\theta'|\leq \delta(\epsilon)$. By definition of $\theta'$,
$q_j(\theta')=0$ for any $j\in\cA(\theta,\epsilon)$. This proves that
$\cA(\theta,\epsilon)\subset A(\theta')$.

\bibliographystyle{IEEEtran}
\bibliography{biblio}

\begin{thebibliography}{10}
\providecommand{\url}[1]{#1}
\csname url@rmstyle\endcsname
\providecommand{\newblock}{\relax}
\providecommand{\bibinfo}[2]{#2}
\providecommand\BIBentrySTDinterwordspacing{\spaceskip=0pt\relax}
\providecommand\BIBentryALTinterwordstretchfactor{4}
\providecommand\BIBentryALTinterwordspacing{\spaceskip=\fontdimen2\font plus
\BIBentryALTinterwordstretchfactor\fontdimen3\font minus
  \fontdimen4\font\relax}
\providecommand\BIBforeignlanguage[2]{{%
\expandafter\ifx\csname l@#1\endcsname\relax
\typeout{** WARNING: IEEEtran.bst: No hyphenation pattern has been}%
\typeout{** loaded for the language `#1'. Using the pattern for}%
\typeout{** the default language instead.}%
\else
\language=\csname l@#1\endcsname
\fi
#2}}

\bibitem{andrieu:2005}
C.~Andrieu, E.~Moulines, and P.~Priouret, ``{Stability of Stochastic
  Approximation under Verifiable Conditions},'' \emph{SIAM J. Control Optim.},
  vol.~44, no.~1, pp. 283--312, 2005.

\bibitem{aubin:cellina:1984}
J.~P. Aubin and A.~Cellina, \emph{{Differential Inclusions: Set-Valued Maps and
  Viability Theory}}, Springer, Ed.\hskip 1em plus 0.5em minus 0.4em\relax
  Grundlehren der Mathematischen Wissenschaften, 1984.

\bibitem{aysal:2009}
T.~Aysal, M.~Yildiz, A.~Sarwate, and A.~Scaglione, ``{Broadcast Gossip
  Algorithms for Consensus},'' \emph{IEEE Transactions on Signal Processing},
  vol.~57, no.~7, pp. 2748--2761, 2009.

\bibitem{barbarossa:2007}
S.~Barbarossa and G.~Scutari, ``{Decentralized Maximum-Likelihood Estimation
  for Sensor Networks Composed of Nonlinearly Coupled Dynamical Systems},''
  \emph{IEEE Transactions on Signal Processing}, vol.~55, no.~7, pp.
  3456--3470, July 2007.

\bibitem{belgama:2010}
E.~Belmega, S.~Lasaulce, M.~Debbah, M.~Jungers, and J.~Dumont, ``{Power
  allocation games in wireless networks of multi-antenna terminals},''
  \emph{Springer Telecommunications Systems Journal}, vol.~44, no.~5, May-June
  2010.

\bibitem{benaim:1999}
M.~Benaim, ``{Dynamics of Stochastic Approximations},'' \emph{Séminaire de
  Probabilités, Lectures Notes in Mathematics}, vol. 1709, pp. 1--68, 1999.

\bibitem{benaim:2005}
M.~Benaim, J.~Hofbauer, and S.~Sorin, ``{Stochastic Approximations and
  Differential Inclusions},'' \emph{SIAM Journal on Control and Optimization},
  vol.~44, no.~1, pp. 328--348, 2005.

\bibitem{bertsekas:1997}
D.~P. Bertsekas and J.~N. Tsitsiklis, \emph{{Parallel and Distributed
  Computation: Numerical Methods}}.\hskip 1em plus 0.5em minus 0.4em\relax
  Athena Scientific, 1997.

\bibitem{nous:eusipco}
P.~Bianchi, G.~Fort, W.~Hachem, and J.~Jakubowicz, ``{Performance of a
  Distributed Robbins-Monro Algorithm for Sensor Networks},'' in
  \emph{EUSIPCO}, Barcelona, Spain, 2011.

\bibitem{borkar:2008}
V.~Borkar, \emph{Stochastic approximation: a dynamical systems
  viewpoint}.\hskip 1em plus 0.5em minus 0.4em\relax Cambridge University
  Press, 2008.

\bibitem{borwein:2006}
J.~Borwein and A.~S. Lewis, \emph{{Convex Analysis and Nonlinear
  Optimization}}, {CMS Books in Mathematics}, Ed.\hskip 1em plus 0.5em minus
  0.4em\relax Springer, 2006.

\bibitem{boyd:2006}
S.~Boyd, A.~Ghosh, B.~Prabhakar, and D.~Shah, ``{Randomized Gossip
  Algorithms},'' \emph{IEEE Transactions on Inform. Theory}, vol.~52, no.~6,
  pp. 2508--2530, 2006.

\bibitem{BuCoMa08}
F.~Bullo, J.~Cortes, and S.~Martinez, \emph{{Distributed Control of Robotic
  Networks}}, ser. Applied Mathematics.\hskip 1em plus 0.5em minus 0.4em\relax
  Princeton University Press, 2008.

\bibitem{benezit:thesis}
F.~Bénézit, ``{Distributed Average Consensus for Wireless Sensor Networks},''
  Ph.D. dissertation, {\'Ecole Polytechnique Fédérale de Lausanne}, 2009.

\bibitem{degroot:1974}
M.~DeGroot, ``Reaching a consensus,'' \emph{Journal of the American Statistical
  Association}, pp. 118--121, 1974.

\bibitem{delyon:2000}
B.~Delyon, ``{Stochastic Approximation with Decreasing Gain: Convergence and
  Asymptotic Theory},'' \emph{Unpublished Lecture Notes,
  http://perso.univ-rennes1.fr/bernard.delyon/as\_cours.ps}, 2000.

\bibitem{hirsh:smale:1974}
M.~Hirsch and S.~Smale, \emph{Differential equations, dynamical systems, and
  linear algebra}.\hskip 1em plus 0.5em minus 0.4em\relax Academic Press, 1974.

\bibitem{johansson:rabi:johansson:jopt-2009}
B.~Johansson, M.~Rabi, and M.~Johansson, ``A randomized incremental subgradient
  method for distributed optimization in networked systems,'' \emph{SIAM
  Journal on Optimization}, vol.~20, no.~3, pp. 1157--1170, 2009.

\bibitem{kar:2010}
S.~Kar and J.~Moura, ``{Distributed consensus algorithms in sensor networks:
  Quantized data and random link failures},'' \emph{IEEE Transactions on Signal
  Processing}, vol.~58, no.~3, pp. 1383--1400, 2010.

\bibitem{kushner:2003}
H.~Kushner and G.~Yin, \emph{{Stochastic Approximation and Recursive Algorithms
  and Applications}}.\hskip 1em plus 0.5em minus 0.4em\relax Springer, 2003.

\bibitem{lu:tang:regier:bow:tac-2011}
J.~Lu, C.~Tang, P.~Regier, and T.~Bow, ``{Gossip Algorithms for Convex
  Consensus Optimization over Networks},'' \emph{IEEE Transactions on Automatic
  Control}, 2011, to appear.

\bibitem{samson:2011}
P.~Mertikopoulos, E.~Belmega, A.~Moustakas, and S.~Lasaulce, ``{Distributed
  Learning Policies for Power Allocation in Multiple Access Channels},''
  \emph{submitted to {IEEE} Journal of Selected Areas in Communications}, 2011.

\bibitem{nedic:tac-2011}
A.~Nedic, ``{Asynchronous Broadcast-Based Convex Optimization Over a
  Network},'' \emph{IEEE Transactions on Automatic Control}, vol.~56, no.~6,
  pp. 1337--1351, june 2011.

\bibitem{nedic:bertsekas:jopt-2001}
A.~Nedic and D.~Bertsekas, ``{Incremental Subgradient Methods for
  Nondifferentiable Optimization},'' \emph{SIAM Journal of Optimization},
  vol.~12, no.~1, pp. 109--138, 2001.

\bibitem{nedic:ozdaglar:tac-2009}
A.~Nedic and A.~Ozdaglar, ``{Distributed Subgradient Methods for Multi-Agent
  Optimization},'' \emph{IEEE Transactions on Automatic Control}, vol.~54,
  no.~1, pp. 48--61, 2009.

\bibitem{nedic:ozdaglar:parrilo:tac-2010}
A.~Nedic, A.~Ozdaglar, and P.~Parrilo, ``{Constrained Consensus and
  Optimization in Multi-Agent Networks},'' \emph{IEEE Transactions on Automatic
  Control}, vol.~55, no.~4, pp. 922--938, April 2010.

\bibitem{rabbat:nowak:2004}
M.~Rabbat and R.~Nowak, ``{Distributed Optimization in Sensor Networks},'' in
  \emph{Proceedings of the 3rd international symposium on Information
  processing in sensor networks}.\hskip 1em plus 0.5em minus 0.4em\relax ACM,
  2004, pp. 20--27.

\bibitem{rabbat:nowak:sac-2005}
------, ``{Quantized Incremental Algorithms for Distributed Optimization},''
  \emph{IEEE Journal on Selected Areas in Communications}, vol.~23, no.~4, pp.
  798--808, 2005.

\bibitem{ram:veeravalli:nedic:TAC:2010}
S.~S. Ram, V.~V. Veeravalli, and A.~Nedic, ``{Distributed and Recursive
  Parameter Estimation in Parametrized Linear State-Space Models},'' \emph{IEEE
  Transactions on Automatic Control}, vol.~55, no.~2, pp. 488--492, 2010.

\bibitem{ram:veeravalli:nedic:INFOCOM:2009}
------, ``{Distributed Non-Autonomous Power Control through Distributed Convex
  Optimization},'' in \emph{IEEE INFOCOM}, 2009.

\bibitem{ram:nedic:veeravalli:jopt-2009}
S.~Ram, A.~Nedic, and V.~Veeravalli, ``{Incremental Stochastic Subgradient
  Algorithms for Convex Optimization},'' \emph{SIAM Journal on Optimization},
  vol.~20, no.~2, pp. 691--717, 2009.

\bibitem{ram:nedic:veeravalli:jota-2010}
------, ``{Distributed Stochastic Subgradient Projection Algorithms for Convex
  Optimization},'' \emph{Journal of optimization theory and applications}, vol.
  147, no.~3, pp. 516--545, 2010.

\bibitem{robbins:monro:1951}
H.~Robbins and S.~Monro, ``{A Stochastic Approximation Method},'' \emph{Ann. of
  Mathem. Statist.}, vol.~22, no.~3, pp. 400--407, 1951.

\bibitem{rockafellar:wets:1998}
R.~Rockafellar and R.~Wets, \emph{Variational Analysis}.\hskip 1em plus 0.5em
  minus 0.4em\relax Springer, 1998.

\bibitem{schizas:2009}
I.~Schizas, G.~Mateos, and G.~Giannakis, ``{Distributed LMS for Consensus-Based
  In-Network Adaptive Processing},'' \emph{IEEE Transactions on Signal
  Processing}, vol.~56, no.~6, pp. 2365--2382, 2009.

\bibitem{schizas:2008}
I.~Schizas, A.~Ribeiro, and G.~Giannakis, ``{Consensus in ad hoc WSNs with
  noisy links-Part I: Distributed estimation of deterministic signals},''
  \emph{IEEE Transactions on Signal Processing}, vol.~56, no.~1, pp. 350--364,
  2008.

\bibitem{scutari:2010}
G.~Scutari, D.~P. Palomar, F.~Facchinei, and J.-S. Pang, ``{Convex
  Optimization, Game Theory, and Variational Inequality Theory in Multiuser
  Communication Systems},'' \emph{IEEE Signal Processing Magazine}, vol.~27,
  no.~3, pp. 35--49, May 2010.

\bibitem{stankovic:stankovic:stipanovic:tac-2011}
S.~Stankovic~and, M.~Stankovic, and D.~Stipanovic, ``{Decentralized Parameter
  Estimation by Consensus Based Stochastic Approximation},'' \emph{IEEE
  Transactions on Automatic Control}, vol.~56, no.~3, pp. 531--543, march 2011.

\bibitem{tse:2005}
D.~Tse and P.~Viswanath, \emph{{Fundamentals of Wireless Communication}}.\hskip
  1em plus 0.5em minus 0.4em\relax Cambridge University Press, 2005.

\bibitem{tsitsiklis:phd-1984}
J.~Tsitsiklis, ``{Problems in Decentralized Decision Making and Computation},''
  Ph.D. dissertation, Massachusetts Institute of Technology, 1984.

\bibitem{tsitsiklis:bertsekas:athans:tac-1986}
J.~Tsitsiklis, D.~Bertsekas, and M.~Athans, ``{Distributed Asynchronous
  Deterministic and Stochastic Gradient Optimization Algorithms},'' \emph{IEEE
  Transactions on Automatic Control}, vol.~31, no.~9, pp. 803--812, 1986.

\end{thebibliography}

\begin{biographynophoto}
{Pascal Bianchi} received the M.Sc. degree of Sup{\'e}lec-Paris XI in 2000 and the Ph.D. degree of the University of Marne-la-Vall{\'e}e in 2003. 
From 2003 to 2009, he was an Associate Professor at the Telecommunication Department of Supélec.
In 2009, he joined the Statistics and Applications group at LTCI-Telecom ParisTech.
His current research interests are in the area of statistical signal processing for sensor networks.
They include decentralized detection, quantization, stochastic optimization, and applications of random matrix theory.
\end{biographynophoto}

\begin{biographynophoto}
  {J\'er\'emie Jakubowicz} received the MS degree (2004) and the PhD
  degree (2007) in applied mathematics from the Ecole Normale
  Sup\'erieure de Cachan. He was an Assistant Professor with T\'el\'ecom
  ParisTech. Since 2011 he has been an Assistant Professor with
  T\'el\'ecom SudParis and an Associate Researcher with the CNRS. His
  current research interests include distributed statistical signal
  processing, image processing and data mining.
\end{biographynophoto}

\end{document}